\pgfplotsset{compat=1.18}
\numberwithin{equation}{section}
\newtheorem{theorem}{Theorem}[section]
\newtheorem{corollary}[theorem]{Corollary}
\newtheorem{lemma}[theorem]{Lemma}
\newtheorem{proposition}[theorem]{Proposition}
\theoremstyle{remark}
\newtheorem*{remark}{Remark}
\newtheorem*{remark*}{Remark}
\theoremstyle{definition} % upright text, bold heading
\newtheorem{definition}[theorem]{Definition}
\newtheorem{problem}[theorem]{Problem}
\DeclareMathOperator*\lowlim{\underline{lim}}
\DeclareMathOperator*\uplim{\overline{lim}}
\let \le \leqslant
\let \ge \geqslant
\newmdtheoremenv{theorembox}{Theorem}
\newmdtheoremenv{lemmabox}{Lemma}
\def\cH{\mathcal{H}}
\def\cN{\mathcal{N}}
\def\eps{\varepsilon}
\title{Loewner--Kufarev entropy and large deviations of the Hastings--Levitov model}
\author{Nathana\"el Berestycki\footnote{University of Vienna, \texttt{nathanael.berestycki@univie.ac.at}}, Vladislav Guskov\footnote{KTH Royal Institute of Technology,  \texttt{vguskov@kth.se}}, Fredrik Viklund\footnote{KTH Royal Institute of Technology, \texttt{frejo@kth.se}}}
\date{\today}
\begin{document}
\maketitle
\begin{abstract}
We consider the Hastings--Levitov HL(0) model in the small particle scaling limit and prove a large deviation principle. The rate function is given by the relative entropy of the driving measure $\rho$ for the Loewner--Kufarev equation:
    % \[
    % H(\rho) = \iint \rho_t(\theta) \log (2\pi \rho_t(\theta)) d\theta dt,
    % \]
    \[
    \mathcal{H}(\rho) = \frac{1}{2\pi}\iint \bar{\rho}_t(\theta) \log \bar{\rho}_t(\theta) d\theta dt,
    \]
    whenever $\rho = \bar{\rho}_t d\theta dt/2\pi$ with $\int_{S^1} \bar{\rho}_t d\theta/2\pi = 1$.
    We investigate the class of shapes that can be generated by finite entropy Loewner evolution and show that it contains all Weil-Petersson quasicircles, all Becker quasicircles, a Jordan curve with a cusp, and a non-simple curve. We also consider the problem of finding a measure of minimal entropy generating a given shape as well as a simplified version of the problem for a related transport equation.
\end{abstract}

\tableofcontents 

\section{Introduction}
\subsection{The Hastings--Levitov model}

\subsubsection*{Description via iterated conformal maps} The Hastings--Levitov model was introduced \cite{hastings1998laplacian, carleson2001aggregation} in an attempt to describe stochastic Laplacian growth by means of a sequence of random conformal maps. Let $\mathbb{D}$ be the unit disc in the complex plane $\mathbb{C}$ and write $\Delta =\mathbb{\hat C}\smallsetminus \mathbb{\overline{D}}$ for the exterior disc. Consider the conformal map \[\varphi_d:\Delta\to\Delta\smallsetminus [1,1+d]\] fixing $\infty$ with positive derivative there. We write  $\varphi_{d, \theta}(z) = e^{i\theta}\varphi_{d}\left(e^{-i\theta}z\right)$ and think about $\varphi_{d, \theta}$ as attaching a ``particle'' (i.e., a slit) of size $d$ at an angle $\theta$ to the unit disk. Composing these maps gives a sequence $\{\Phi_n\}$ of conformal maps
\begin{align}\label{Phi_n_mapping}
\Phi_{n} = \varphi_{d_1, \theta_1}\circ\dots\circ\varphi_{d_n, \theta_n}.
\end{align}
The $\Phi_n$ maps the exterior disk to the exterior of a growing cluster $K_n$ whose shape will depend on the choice of angles $\{\theta_k\}$ and particle sizes $\{d_k\}$.

% Imposing different conditions on the parameters $\{\theta_{k}\},\{d_k\}$, in particular making them random, allows to model physical growth phenomena such as Eden biological growth, diffusion-limited aggregation, dielectric breakdown, etc. 
In the Hastings--Levitov model HL$(\alpha)$, $\alpha > 0$, the angles $\{\theta_k\}$ are chosen to be i.i.d. uniform random variables, which corresponds to attaching particles independently, according to harmonic measure seen from $\infty$. The particle sizes $\{d_k\}$ are given by 
\begin{align}\label{eq-HL-d-c}
d_k =  \varepsilon |\Phi_{k-1}'(e^{i\theta_k})|^{-\alpha/2}. 
\end{align}
Here $\alpha, \varepsilon>0$ are parameters, the latter  describing the overall scale of particles. 

Different values of the parameter $\alpha$  (heuristically) correspond to different lattice growth models. For instance, it has been argued that $\alpha =1$ corresponds to a continuum version of the Eden model while $\alpha = 2$ corresponds to a continuum version of diffusion-limited aggregation (DLA), see \cite{hastings1998laplacian} which also conjectures an intriguing phase transition at $\alpha =1$. In practice, one needs to regularize the model due to the difficulty in evaluating the derivative of the conformal map on the irregular cluster boundary, see, e.g., \cite{carleson2001aggregation, rohde2005some, johansson2015small}. In this paper we only consider the HL$(0)$ model which requires no regularization, and in this case the mapping \eqref{Phi_n_mapping} is a composition of i.i.d. random uniform rotations of the fixed conformal map $\varphi_{d}$. This model is the easiest to analyze and it has received substantial attention in the literature, see, \cite{norris2012hastings,rohde2005some, silvestri2017fluctuation, johansson2012scaling,berestycki2021explosive,johansson2009rescaled,berger2022growth}. 

% In \cite{johansson2012scaling} the \textit{anisotropic Hastings--Levitov} model was introduced: it takes the setup of the HL$(0)$ model but allows the angles to follow quite general probability distribution, not necessarily uniform. The large deviations results of this paper hold for the anisotropic case as well. 

% The HL$(0)$ model is the most studied among Hastings--Levitov models. Many properties were derived in \cite{norris2012hastings,rohde2005some, silvestri2017fluctuation} to name but a few, as well as variations and generalizations of the model were introduced in \cite{johansson2012scaling,berestycki2021explosive,johansson2009rescaled,berger2022growth}.

\subsubsection*{Description via the Loewner-Kufarev equation}
It is useful to describe the HL$(0)$ growth process in continuous time using the Loewner--Kufarev equation, see \cite{carleson2001aggregation}. 
Briefly put, letting each slit particle grow continuously one after the other (keeping the overall order of the particles) and parametrizing appropriately, we obtain a family of exterior conformal isomorphisms $f_t : \Delta \to \Delta_t$, where $\Delta_t = \hat{\mathbb{C}} \smallsetminus K_t$ are decreasing simply connected domains and $f_t(z) = e^t z + o(1)$ as $z \to \infty$. The family $(f_t)_{t\ge 0}$ is known as a Loewner chain and satisfies the Loewner--Kufarev equation:
\begin{align}\label{eq:LK-eq}
%\begin{cases*}
  \dot f_{t}(z) =  zf'_{t}(z)p_t(z), \qquad  f_0(z) = z, \qquad z\in \Delta,
%\end{cases*} 
\end{align}
understood in the absolutely continuous (integrated) sense, where
\[
p_t(z) = \int_{0}^{2\pi}\frac{z+e^{i\theta}}{z-e^{i\theta}}\rho_t(d\theta)
\]
is an analytic function with positive real part and $p(\infty)=1$.
Here $(\rho_t)_{t\ge 0}$ is a family of probability measures on $S^{1}=[0,2\pi]$ which we shall often identify with the unit circle. 
%See, e.g., Section~3 of \cite{viklund_loewnerkufarev_2024}.

Conversely, under suitable assumptions, a given family of probability measures $(\rho_t)$ (defined for a.e.\ $t$) can serve as an input, refereed to as driving measure, for the Loewner--Kufarev equation which can be solved to produce a Loewner chain $(f_t)$. The clusters $K_t = \hat{\mathbb{C}} \smallsetminus \Delta_t$ of the family of decreasing domains $\Delta_t := f_t(\Delta)$ is a family of growing compact sets. For example, if $\rho_t \equiv d\theta/2\pi$, we have $f_t(z) = e^t z$ and $K_t = e^t \overline{\mathbb{D}}$. In general, there is a one-to-one correspondence between growing clusters, Loewner chains, and driving measures. See, e.g., \cite{viklund_loewnerkufarev_2024} and the references therein. 

\begin{remark}
The most classical setup, used for instance when constructing Schramm-Loewner evolution curves, is that the measure is a continuously moving point mass on the circle: $\rho_t = \delta_{W_t}(\theta)$, where the continuous function $t \mapsto W_t$ takes values in $S^1$.
\end{remark}
Note that the Loewner--Kufarev equation \eqref{eq:LK-eq} ensures that time is parametrized in such a way that the cluster $K_t$ has $\log$-capacity $c(t) = \int_0^t |\rho_s| ds$ at time $t$, where $|\rho_s|$ is the total mass of $\rho_s$. In particular, if $\rho_s$ is a probability measure for every $s\ge 0$, then $K_t$ has capacity equal to $e^t$ for any $t\ge 0$, but one may also consider more general finite measures. In general, $f_t(z) = e^{c(t)}z + o(1)$ as $z \to \infty$.

The Hastings--Levitov HL($\alpha$) model $(K_1, K_2, \ldots, K_n)$ described above fits in the framework of Loewner chains. Reparametrizing time so that the corresponding cluster grown at time $t\ge 0$ (denoted with a small abuse of notation by $K_t$) has capacity given by $\text{cap}(K_t) = e^t$ for all $t\ge 0$, its family of driving measures $(\rho_t)_{t\ge 0}$ depends on $\varepsilon$ and we call it $\mu_\varepsilon = (\mu_{\varepsilon, t})_{t\ge 0}$. It is easy to check that 
\begin{align}\label{eq:mu_delta}
  \mu_{\varepsilon, t}(d\theta) = \sum\limits_{k=1}^{n}\delta_{\theta_{k}}(d\theta)\mathds{1}_{[T_{k-1}, T_k)}(t),
\end{align}
% \begin{align*}
%   \mu(d\theta,dt) = \sum\limits_{k=1}^{n}\delta_{\theta_{k}}(d\theta)\mathds{1}_{[\frac{k-1}{n}, \frac{k}{n})}(t)dt,
% \end{align*}
where $T_k = \sum_{i=0}^k t_i$, with  capacity increments $\{t_k\}$ given by
\begin{equation}\label{tk}
e^{t_k} = 1 + \frac{d_k^2}{4(1+d_k)}
\end{equation}
and $\{ d_k\}$ defined as in \eqref{eq-HL-d-c}. When the parameter $\alpha=0$, the slit length $d_k \equiv \varepsilon$ is constant. Also, $\delta_\theta$ denotes the Dirac measure at $\theta$. (Note that with this time-parametrization, $\mu_{\varepsilon,t}$ is a probability measure for each $0\le t < T_n$ and for all $\varepsilon>0$.) With this choice of parametrisation, the relation between the map $\Phi_n$ of \eqref{Phi_n_mapping} and the Loewner map $f_t$ is that $\Phi_n = f_{T_n}$. 

\subsubsection*{The small particle scaling limit}
We shall be concerned with the small particle scaling limit of HL$(0)$-clusters, see, e.g., \cite{norris2012hastings, johansson2012scaling, johansson2015small}. This is the limit where the particle size $\varepsilon \to 0$ and the number of particles $n\to +\infty$ simultaneously in such a way that the limiting cluster becomes non-trivial, in the sense that it has positive (and finite) log-capacity. That is, let $t_k$ be as in \eqref{tk} the individual log-capacity of each particle. (Recall that since $\alpha = 0$, we have $d_k = \varepsilon$ and thus $t_k = c(\varepsilon) := \tfrac{1}{4}\varepsilon^2 + o(\varepsilon)$.) 
For some given $T > 0$ we take $n = n(\varepsilon)$ such that $nc(\varepsilon) \to T$ as $\varepsilon  \to 0$. This ensures that, after $n$ particles have been added, the capacity of $K_n$ converges (in the limit as $\varepsilon \to 0$) to $e^T$. We are interested in the limiting cluster.

\subsection{Large deviation principles}
\subsection*{Large deviations in random geometry}
A large deviation principle (LDP) describes probabilities of rare events on an exponential scale. Let $(X_\varepsilon)_{\varepsilon > 0}$, be a family of random variables. Roughly speaking, $(X_\varepsilon)$ is said to satisfy an LDP if there exists a non-negative and lower semi-continuous function $I$, called rate function, and a function $r$, called the rate, with $r ( \eps ) \to \infty$ as $\eps \to 0$, such that 
\[
P(X_\varepsilon \in A) \approx e^{-r(\eps) \inf_{x \in A} I(x)}, \quad \varepsilon \to 0+.
\]
To make such a statement precise one has to specify the topology and replace $\approx$ by suitable estimates. A classical example is when $X_\varepsilon$ is taken to be $(\sqrt{\varepsilon}B_t)_{t\ge0}$, where $B$ is standard Brownian motion. In this case the rate function is the one-dimensional Dirichlet energy, which is finite for functions in the Sobolev space $W^{1,2}$ (the Cameron-Martin space); this is known as Schilder's theorem.
Recently there has been substantial interest in large deviation principles in random planar geometry, in particular in relation to Schramm-Loewner evolutions \cite{Wang2019_DeterministicLoewnerChainEnergy, APW, guskov2023large, krusell2024, abuzaid2025large}. Both the Loewner energy and Loewner--Kufarev energy (see below) arise as rate functions in this context. The study of these rate functions and the geometry of configurations (curves, loops, foliations) which are assigned finite energy has been  fruitful, see \cite{Wang2022survey} and the references therein. In this paper we study the Hastings--Levitov model from a similar perspective: we prove several LDPs and identify the corresponding rate functions. We then analyze the geometry of configurations that are assigned finite values by the rate function.
\subsection*{LDP for HL(0) in discrete time}
To state our first result we need some additional notation. We write $S^1 = [0, 2\pi)$ (recall also that we sometimes identify $S^1$ with the unit circle in the complex plane). For $T>0$, let $\mathcal{M}(S^1\times [0,T])$ be the space of Borel measures on the cylinder {$S^1 \times [0,T]$} and let
\begin{align*}
  \mathcal{N}_T = \{\mu\in \mathcal{M}(S^1\times [0,T]):\mu(S^1\times I) = |I| \text{ for any interval } I \subset [0, T]\},
\end{align*}
where $|I|$ denotes Lebesgue measure of $I$. When $T = +\infty$ we write $\mathcal{N}_+$. Elements of $\mathcal{N}_T$ will be referred to as (normalized) driving measures. By disintegration every $\rho \in \mathcal{N}_T$ corresponds to a family of probability measures $(\rho_t)$ as above.  We equip $\cN_T$ with the weak topology. We also view the driving measures $\mu_\varepsilon$ in \eqref{eq:mu_delta}
as a measure in $\mathcal{N}_T$ by setting $\mu_\varepsilon( d\theta, dt) = \mu_{\varepsilon, t} (d\theta) dt$. 
%Given $\mu\in\mathcal{N}$ a measurable family $(\mu_{t})$, is obtained by disintegration. 

% We shall primarily be concerned with the \emph{small particle scaling limit} of HL$(0)$-clusters. This is the limit where the particle size $\varepsilon \to 0$ and the number of particles $n\to \infty$ simultaneously in such a way that the limiting cluster becomes non-trivial in the sense that it has capacity at least $1$. That is, for some given $T > 0$ we take $n = n(\varepsilon)$ such that $nc \to T$ as $\varepsilon  \to 0$, where $t_k \equiv c =c(\varepsilon)$, as in \eqref{eq-HL-d-c}, and we are interested in the limiting clusters. {(Recall that since $\alpha = 0$, we have $d_k = \varepsilon$ and thus $t_k = \tfrac{1}{4}\varepsilon^2 + o(\varepsilon)$, so that $n = n(\varepsilon) = 4T/\varepsilon^2 + o(\varepsilon^{-2})$.)}

It is known that in the small particle limit (described above) as $\varepsilon\to 0 $, the HL$(0)$ driving measures for the Loewner--Kufarev equation \eqref{eq:LK-eq} converge towards the deterministic limit $\mu_{0}(d\theta,dt)=d\theta dt/2\pi$ almost surely, in the topology of weak convergence in $\mathcal{N}_T$. By continuity the corresponding Loewner chains also converge almost surely, in the Carath\'eodory sense. See~\cite[Theorem 2]{johansson2012scaling} for these facts. As pointed out above, the limiting driving measure $\mu_{0}$ generates growing discs $e^{t}\overline{\mathbb{D}}$. This statement can be interpreted as a shape theorem: the HL$(0)$ clusters converge  towards a family of growing concentric discs. 

In this article, we are interested in the large deviation event that the HL$(0)$ cluster process is ``close'' to a given growth process, potentially different from concentric discs, in the small particle limit. One way to make this precise is via the driving measures. The corresponding large deviation result is stated below, and is the first main result of this article. 
% Consider the driving measure on $\Sigma = [0,2\pi]\times[0,1]$
% $$\mu_n(d\theta, dt) = \sum\limits_{k=1}^{n}\delta_{\theta_k}(d\theta)\mathds{1}_{\left[\frac{k-1}{n}, \frac{k}{n}\right)}(t)dt,$$
% where $\{\theta_k: k=1,...,n\}$ are i.i.d. random variables with the arbitrary probability distribution $\lambda$ on $[0,2\pi]$. Note that for the HL(0) model one only needs $\theta$'s to be uniformly distributed, however, this more general setup does affect the proof of large deviations. Moreover, arbitrary distributed attachment angels give rise to the \textit{anisotropic} Hastings--Levitov model introduced in \cite{johansson2012scaling}.

% For a given bounded continuous function $f:\Sigma\to\mathbb{R}$
% \begin{align*}
%     \mu_n(f) = \frac{1}{n}\sum\limits_{k=1}^{n}\left(n\int_{\frac{k-1}{n}}^{\frac{k}{n}}f(\theta_k,t)dt\right).
% \end{align*}
% The summands are independent random variables with finite second moments. By a variation of Kolmogorov's convergence theorem
% \begin{align*}
%     \mathbb{P}\left[\lim\limits_{n\to\infty}\mu_{n}(f) = \int_{\Sigma}f(\theta,t)\lambda(d\theta) dt\right]=1.
% \end{align*}
% The null set where convergence fails depends on the choice of $f$. However, taking into account separability of the space leads to 
% weak convergence of measures almost surely 
% \begin{align*}
%     \mathbb{P}\left[\mu_n(d\theta,dt)\Rightarrow \lambda(d\theta) \otimes dt\right]=1.
% \end{align*}
% It implies that $Q_n(d\omega) \Rightarrow \delta_{\lambda(d\theta)\times dt}(d\omega)$, which indicates the presence of large deviations as  $n\to\infty$. 

\begin{theorem}\label{theorem_LDP}
\begin{samepage}
The family of HL$(0)$ driving measures $\{\mu_\varepsilon:\varepsilon>0\}$ defined in \eqref{eq:mu_delta} satisfies an LDP in $\mathcal{N}_{T}$, equipped with the weak topology, with {rate $4/\varepsilon^2$}, and convex, good rate function $\cH:\mathcal{N}_{T}\to \mathbb{R}_+$ given by the  {relative entropy}:
    \begin{align}\label{LK-entropy}
     \cH(\rho) =   \frac{1}{2\pi}\iint_{S^1 \times [0,T]} \bar{\rho}_t(\theta) \log \bar{\rho}_t(\theta) d\theta dt
    \end{align}
    whenever $\rho = \bar{\rho}_t(\theta) d\theta dt/2\pi$ and the integral exists, and set to $+\infty$ otherwise.
\end{samepage}
\end{theorem}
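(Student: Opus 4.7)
The plan is to combine the compactness of $\mathcal{N}_T$ (which supplies exponential tightness for free) with a direct computation of the limiting log-Laplace functional, then apply the Bryc-type inverse of Varadhan's lemma to conclude. The identification of the resulting Legendre transform with $\mathcal{H}$ will proceed by slice-wise duality between relative entropy and log-Laplace on $(S^1, d\theta/(2\pi))$.

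\emph{Tightness and log-Laplace limit.} For HL$(0)$ the capacity increments are deterministic: $t_k = c(\varepsilon) = \varepsilon^2/4 + O(\varepsilon^3)$, so $\mu_\varepsilon$ is a deterministic function of the i.i.d.\ uniform angles $\theta_1,\ldots,\theta_N$ on $S^1$ with $N = N(\varepsilon) \sim 4T/\varepsilon^2$. The space $\mathcal{N}_T$ is a closed subset of the weakly compact set of Borel measures on the compact space $S^1\times[0,T]$ of total mass $T$, hence is itself weakly compact, so exponential tightness is automatic. For any $\phi\in C(S^1\times[0,T])$, uniform continuity gives $\int_{T_{k-1}}^{T_k}\phi(\theta_k,t)\,dt = c(\varepsilon)\phi(\theta_k, T_{k-1}) + o(c(\varepsilon))$ uniformly in $k$ and $\theta_k$. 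Independence of the $\theta_k$ then yields
\[
\frac{\varepsilon^2}{4}\log\mathbb{E}\exp\!\left(\frac{4}{\varepsilon^2}\int\phi\,d\mu_\varepsilon\right) = c(\varepsilon)\sum_{k=1}^{N}\log\!\left(\frac{1}{2\pi}\int_{S^1} e^{\phi(\theta, T_{k-1})}\,d\theta\right) + o(1),
\]
a Riemann sum converging to $\Lambda(\phi) := \int_0^T \log\bigl(\frac{1}{2\pi}\int_{S^1} e^{\phi(\theta,t)}\,d\theta\bigr)\,dt$.

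\emph{LDP and identification of the rate function.} Bryc's inverse to Varadhan's lemma (e.g.\ Dembo--Zeitouni, Thm.~4.4.2) now gives the LDP on $\mathcal{N}_T$ at rate $4/\varepsilon^2$ with good, convex rate function $\Lambda^*(\rho) = \sup_{\phi \in C}\{\int \phi\,d\rho - \Lambda(\phi)\}$ (convex as a Legendre transform). The upper bound $\Lambda^*(\rho)\le \mathcal{H}(\rho)$ follows from the slice-wise Donsker--Varadhan inequality: for any $\psi\in C(S^1)$ and disintegrated density $\bar\rho_t$,
\[
\frac{1}{2\pi}\int \psi\,\bar\rho_t\,d\theta - \log\frac{1}{2\pi}\int e^{\psi}\,d\theta \;\le\; \frac{1}{2\pi}\int \bar\rho_t\log\bar\rho_t\,d\theta,
\]
applied pointwise in $t$ with $\psi = \phi(\cdot,t)$ and then integrated. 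If $\rho$ has no such disintegrated density a.e.\ absolutely continuous in $\theta$, one shows $\Lambda^*(\rho) = +\infty$ by testing against a suitable sequence of continuous indicator approximants. For the matching lower bound, I plug in the formal optimizer $\phi(\theta,t) = \log \bar\rho_t(\theta)$, approximated by continuous functions via truncation to $[-M, M]$ and joint mollification, and let $M\to\infty$ by monotone/dominated convergence.

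\emph{Main obstacle.} I expect the delicate step to be this last identification: the optimal test function $\log\bar\rho_t$ is only measurable, so one must verify that restricting to jointly continuous $\phi$ in the Legendre transform does not strictly lose information. The truncation-and-mollification scheme is robust because $\bar\rho_t\log\bar\rho_t$ is integrable under the finite-entropy assumption, but the bookkeeping requires care---one must ensure that the log-moment-generating term for the approximation does not overshoot $\mathcal{H}(\rho)$ while the pairing $\int\phi\,d\rho$ still converges to it. Goodness of $\mathcal{H}$ then follows from lower semicontinuity of relative entropy together with weak compactness of $\mathcal{N}_T$.
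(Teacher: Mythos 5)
Your approach is genuinely different from the paper's. The paper discretizes time into $m$ intervals, applies Sanov's theorem on each slice, uses a product LDP and contraction principle to handle the step-function approximation $\tilde\mu_{n,m}$, and then passes $m\to\infty$ by exponential approximation and identifies the resulting abstract rate function with the relative entropy via Jensen's inequality and lower semi-continuity. You instead compute the limiting scaled log-Laplace functional $\Lambda(\phi)$ for linear test functionals and identify the Legendre transform. This is cleaner conceptually, and your computation of $\Lambda(\phi)$ and your slice-wise Donsker--Varadhan argument for $\Lambda^*\le\mathcal{H}$ and the truncation argument for $\Lambda^*\ge\mathcal{H}$ are all sound (for the truncation, note one only needs $\limsup_M \Lambda(\phi_M)\le 0$, which follows from $\tfrac{1}{2\pi}\int e^{\phi_M}d\theta \le 1+e^{-M}$, plus dominated convergence for $\int\phi_M\,d\rho$ using the uniform lower bound $\phi_M\bar\rho_t\ge -1/e$ and the finite-entropy hypothesis).

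However, there is a genuine gap at the step where you invoke an LDP. Bryc's theorem (Dembo--Zeitouni, Thm.\ 4.4.2) requires the existence of the scaled log-moment limit $\Lambda_f$ for \emph{every} bounded continuous $f:\mathcal{N}_T\to\mathbb{R}$, not merely for the linear functionals $\mu\mapsto\int\phi\,d\mu$ with $\phi\in C(S^1\times[0,T])$; restricting to linear $\phi$ in the supremum defining the rate function is not Bryc's conclusion. The tool you actually need is the abstract G\"artner--Ellis/Baldi theorem for locally convex topological vector spaces (e.g.\ Dembo--Zeitouni, \S4.5, Thm.\ 4.5.20 and its corollaries). That theorem gives the upper bound from exponential tightness together with existence of $\Lambda$ on the dual, but the matching lower bound requires an extra hypothesis: either an exposed-points condition on $\Lambda^*$ or, more simply, that $\Lambda$ is finite-valued, lower semi-continuous, and G\^ateaux differentiable. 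In your setting $\Lambda(\phi)=\int_0^T\log\bigl(\tfrac{1}{2\pi}\int_{S^1}e^{\phi(\theta,t)}d\theta\bigr)dt$ is indeed everywhere finite and G\^ateaux differentiable with derivative $\psi\mapsto\int_0^T \bigl(\int e^{\phi}\psi\,d\theta/\int e^{\phi}\,d\theta\bigr)dt$, so the hypothesis is satisfiable---but it must be stated and checked, since without it your cited theorem does not yield the lower bound and the argument as written is incomplete. Once you replace the Bryc citation with the abstract G\"artner--Ellis/Baldi theorem and verify G\^ateaux differentiability, the remainder of your plan goes through and gives an alternative, somewhat shorter, route to the result than the paper's discretize-then-Sanov argument.
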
 
%\begin{proof}
%See Section %\ref{section_proof}.
%\end{proof} 

In other words, $\cH$ is a convex, lower semi-continuous function with compact level sets $\{ \cH( \rho) \le a \}$, such that for any open set $O$ and any closed set $C$ of $\mathcal{N}_T$, 
$$
\text{(i)\quad } \limsup_{\varepsilon \to 0} \frac{\varepsilon^2}{4} \log \mathbb{P} ( \mu_\varepsilon \in C) \le - \inf_C \cH( \rho) , 
$$
and 
$$
\text{(ii)\quad } \liminf_{\varepsilon \to 0} \frac{\varepsilon^2}{4} \log \mathbb{P} ( \mu_\varepsilon \in O) \ge - \inf_O \cH( \rho).
$$
% \begin{remark}
% In Theorem~\ref{theorem_LDP} we are keeping the same convention as in \cite{viklund_loewnerkufarev_2024}, which is to write the driving measure at time $t\ge 0$ as $\rho_t(\theta) d\theta$ and $\int_0^{2\pi} \rho_t(\theta)d\theta = 1$.
 \begin{remark}
    The relative entropy functional arises from Sanov's classical theorem, see \cite{dembo2009large}, which is applied in the proof of Theorem~\ref{theorem_LDP}.
\end{remark}
Recall that, given two measures $\mu$ and $\nu$, such that $\nu$ is absolutely continuous with respect to $\mu$ with Radon-Nikodym derivative given by $f$, the relative entropy of $\nu$ with respect to $\mu$ (or Kullback--Leibler divergence) is the quantity
$$
H(\nu|\mu) : = \int \log f d\nu = \int f \log f d\mu. 
$$
Thus the rate function $\cH$ in the above theorem is the relative entropy with respect to the measure $\mu_0( d\theta,dt) = d\theta dt/2\pi $ on $S^1\times [0, T]$, whose total mass of its $t$ marginal is 1. That is, 
$$\cH(\rho) = H(\rho | \mu_0) = \iint_{S^{1}\times [0,T]}\bar\rho_{t}(\theta)\log\bar\rho_{t}(\theta) \mu_{0}(d\theta, dt),$$ 
where $\bar \rho_{t}(\theta)$ is the Radon-Nikodym derivative of $\rho$ with respect to $\mu_{0}$.
%where $\bar \rho = 2\pi \rho$ (note that $\int_0^{2\pi} \bar \rho_t(\theta)d\theta = 2\pi$, and that $\int_0^{2\pi} \bar \rho_t(\theta) \tfrac{d\theta}{2\pi} = 1$, for all $0\le t\le T$). 
By Jensen's inequality, since $x\mapsto x \log x$ is convex on $(0, \infty)$ and $d\theta/2\pi$ is a probability measure, $\cH(\rho)\ge 0$. 

%
%we have for any $0 \le t \le T$: 
%\begin{align*}
%H \Big( \bar \rho_t \Big| %\mu_{0,t}\Big) &= \int_{0}^{2\pi} \bar \rho_t \log \bar \rho_t  \frac{d\theta}{2\pi}\\ 
%&
%\ge L \log L, \text{ where } L = \int_0^{2\pi} \bar \rho_t \tfrac{d\theta}{2\pi} = 1 \\
%& = 0,
%\end{align*}
%so that 
%$$
%\cH(\rho) = \iint_{S^1 \times [0,T]} \bar \rho_t \log (\bar \rho_t) \tfrac{d\theta}{2\pi}dt \ge 0
%$$
%as well. 
%we have
%\[
%h(\rho_t) := h(\rho_t d\theta \mid d\theta/2\pi)=\int_{S^1} \rho_t(\theta) \log (2\pi\rho_t(\theta))  d\theta = \frac{1}{2\pi}\int_{S^1} \bar{\rho}_t \log \bar{\rho}_t  d\theta \ge 0,
%\]
%where we set $\bar{\rho}_t:= 2\pi \rho_t$ and note that $\int_{S^1}\bar{\rho}_t d\theta/2\pi = 1$. With this notation,
%\[
%H(\rho) = \int h(\rho_t) dt
%\]
%and $h(\rho_t)$ is the differential entropy of $\bar{\rho}_t$.

%\nb{Next two remarks are candidates for deletion, do you agree?}

\begin{remark}
    The proof of Theorem \ref{theorem_LDP} also gives the corresponding result for the anisotropic HL(0) model of \cite{johansson2012scaling}. In this model the distribution of angles is chosen according to some given density $\lambda$ as opposed to being uniform. In this case, the rate function is
    \[
         \mathcal{H}_\lambda(\rho) =   \frac{1}{2\pi}\iint_{S^1 \times [0,T]} \rho_t^\lambda(\theta) \log \rho_t^\lambda(\theta) \lambda(\theta)d\theta dt
    \]
    and $\rho_t^\lambda$ is the Radon-Nikodym derivative of $\rho$ with respect to $\lambda(\theta)d\theta dt/2\pi$.
\end{remark}
\subsubsection*{LDP for HL(0) in continuous time}
 We now consider the Hastings--Levitov HL($0$) model, where particles arrive in continuous time according to a Poisson process rather than discrete time, see, e.g., \cite{johansson2009rescaled}. More precisely, let $(N(t), t \ge 0)$ be a Poisson  process with intensity $\lambda = 4/\varepsilon^2$, independent of the i.i.d. angles $\{ \theta_k\}_{k\ge 1}$ defining the discrete time model in \eqref{eq-HL-d-c}, \eqref{Phi_n_mapping}. We then set $\tilde \Phi_t: = \Phi_{N(t)}$ for $t\ge 0$.  We encode this growth process via a Loewner evolution $(\tilde f_t)_{t\ge 0}$. It is driven by a family of measures $\tilde \mu_\varepsilon = (\tilde \mu_{\varepsilon, t })_{t\ge 0}$ as in \eqref{eq:mu_delta}, but the form of these driving measures is slightly more complicated, as we are not simply reparametrizing by capacity: 
\begin{equation}
    \label{eq:tilde_mu}
    \tilde \mu_{\varepsilon, t} : = \sum_{k=1}^\infty \frac{t_k}{X_k} \delta_{\theta_k}(d\theta) 1_{(\tau_{k-1}, \tau_k]}(t).
\end{equation}
% Alternative: define
% \[
% U(t) = \sum_{k=1}^{N(t)} \theta_k, \qquad \mu = \frac{}{}\delta_{e^{iU_t}}
% \]
Here the $(X_k)_{k\ge 1}$ are the independent exponential random variables with parameter $\lambda$ defining the intervals of time during successive arrivals of the Poisson process $(N(t))_{t\ge 0}$, and for $k \ge 0$, $\tau_k = X_1+ \cdots + X_k$ is the time of the $k$th arrival in the Poisson process (so $N(\tau_k) = k $, and $\tau_k - \tau_{k-1} = X_k$). 

The coefficient of the Dirac mass in the right hand side of \eqref{eq:tilde_mu} ensures that the log-capacity of $\tilde K_t: = \mathbb{C} \setminus \tilde f_t(\Delta)$ grows by a total of $t_k$ (depending on $\varepsilon$ as in \eqref{tk}) during the interval between the time of arrival of the $(k-1)$th particle and that of the $k$th particle, as in \eqref{eq:mu_delta}. With these choices, for any $k\ge 0$ we have 
$$
\tilde \Phi_{\tau_k} = \tilde f_{\tau_k}
$$
and $\tilde K_t$ has capacity $\exp ( \lambda^{-1} N(t) + o(1))$, where $o(1)$ is a term which is uniformly bounded by a quantity tending to zero as $\varepsilon \to 0$ (and comes from the fact that in between the two successive values of $\tau_k$ the capacity grows by a fraction of $t_k$ and so is bounded by $t_k = O(\varepsilon^2)$). That is, for every fixed $t\ge 0$ we have  $\tilde f_t(z) = e^{c(t)}z  + o(1)$ as $z \to \infty$, where the log-capacity $c(t)$ of $\tilde K_t$ satisfies $c(t) = \lambda^{-1} N(t) + o(1)$, and $o(1)$ converges to zero as $\varepsilon \to 0$ uniformly.

Note that by the law of large numbers, as $\varepsilon\to 0$, $c(t) \to t$ in probability. But at the level of large deviations, it is perfectly possible for $c(t)$ to grow at a different rate. Denote
\begin{align*}
\tilde{\mathcal{N}} :
&= \big\{\mu\in \mathcal{M}(S^1\times \mathbb{R}): \text{ there exists a differentiable function $m: \mathbb{R} \to \mathbb{R}$ such that }\\
& \hspace{3.5cm} \mu(S^{1}\times I) = \int_I m_{t} dt \text{ for any interval } I \subset \mathbb{R}\big\}.
\end{align*}
 Given $\tilde \rho \in \tilde{\mathcal{N}}$, let $s\mapsto t(s)$ be an increasing diffeomorphism so that if we define $\rho$ via
\begin{equation}\label{eq:diffeo}
    \tilde \rho_s = t'(s) \rho_{t(s)},
\end{equation}   
 then $\rho$ has marginal mass $t\mapsto m_{t} \equiv 1$, i.e., $\rho \in \cN$ and the corresponding Loewner chains satisfy $f^{\tilde \rho}_s = f_{t(s)}^\rho$, see \cite{viklund_loewnerkufarev_2024}. 
% (Note that since $m_t \equiv 1$, such a $\rho$ has necessarily $h_{\mathbb{R}}(m) = 0$.) 
To state 
our large deviation principle for this continuous-time HL$(0)$ model, we shall impose a restriction that the total number of particles (or equivalently the total capacity) is fixed, equal to what one would have if growing the shape at constant rate. 
%Thus let 
% \begin{align*}
% \tilde{\mathcal{N}} :
% &= \{\mu\in \mathcal{M}(S^1\times \mathbb{R}): \text{ there exists a differentiable function $m: \mathbb{R} \to \mathbb{R}$ such that }\\
% & \hspace{3.5cm} \mu([0,2\pi]\times I) = \int_I m_t dt \text{ for any interval } I \subset \mathbb{R}\}
% \end{align*}
Set
$$
\tilde{\cN}_T = \big\{ \tilde \rho \in \tilde{\cN}: \tilde m \text{ has support in $[0, T]$ and } \int_{0}^T \tilde m(s) ds = T \big\}
$$
and extend the definition of $\mathcal{H}$ to $\tilde{\mathcal{N}}_T$ by setting
\[
\mathcal{H}(\tilde \rho) = \iint_{S^1 \times [0,T]}\tilde{\rho}_s\log (2\pi \tilde{\rho}_s) d\theta ds.
\]
Finally, let
\[
h_T(f) = \int_0^T f \log f dt.
\]
\begin{proposition}\label{P:LDPcontinuous}
The family of continuous-time HL$(0)$ driving measures $\{\tilde \mu_\varepsilon:\varepsilon>0\}$ defined in \eqref{eq:tilde_mu} satisfies an LDP in $\tilde{\mathcal{N}_{T}}$, equipped with the weak topology, with {rate $ \lambda = 4/\varepsilon^2$} as $\varepsilon \to 0$, and convex, good rate function 
$$\tilde \rho \in \tilde{\cN}_T \mapsto \cH(\tilde \rho) = \cH( \rho) + h_T(\tilde m),
$$
where $m$ is an increasing diffeomorphism that relates $\tilde\rho$ and $\rho$ in the equation~\eqref{eq:diffeo}.
%which coincides with $\cH(\tilde \rho)$ on $\tilde{\cN}_T$ by \eqref{eq:Hreparam}.
\end{proposition}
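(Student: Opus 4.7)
The plan is to decompose the continuous-time driving measure $\tilde \mu_\varepsilon$ into two independent pieces and combine their separate LDPs via the contraction principle. Explicitly, $\tilde \mu_\varepsilon$ is a deterministic function of the pair $(\mu_\varepsilon, c_\varepsilon)$, where $\mu_\varepsilon$ is the discrete-time, capacity-parametrized driving measure from \eqref{eq:mu_delta} (depending only on the i.i.d.\ angles $\{\theta_k\}$) and $c_\varepsilon(s) = \int_0^s \tilde m_{\varepsilon, u}\,du$ is the log-capacity viewed as a function of clock time (depending only on the Poisson arrivals $\{X_k\}$). By construction these two random objects are independent, and $\tilde \mu_\varepsilon$ is precisely the pullback of $\mu_\varepsilon$ under the map $(s, \theta) \mapsto (c_\varepsilon(s), \theta)$, with Jacobian $c_\varepsilon'(s) = \tilde m_{\varepsilon, s}$, as in \eqref{eq:diffeo}.

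For the angular part, Theorem~\ref{theorem_LDP} directly supplies an LDP for $\mu_\varepsilon$ in $\cN_T$ with rate $\lambda$ and good rate function $\cH(\rho)$. For the capacity profile, $c_\varepsilon(s)$ differs from $\lambda^{-1} N(s)$ by a deterministic error $o(1)$ uniformly in $s \in [0, T]$, so the two processes are exponentially equivalent at speed $\lambda$. A standard Mogulskii/Cram\'er LDP for the rescaled Poisson process (whose log-Laplace transform is $\lambda t(e^\alpha - 1)$) then gives an LDP for $c_\varepsilon$ in $C([0, T])$, equipped with the uniform topology, with rate $\lambda$ and good rate function
\[
J(c) = \int_0^T \phi(c'(s))\,ds, \qquad \phi(x) = x \log x - x + 1,
\]
on absolutely continuous non-decreasing paths with $c(0)=0$, and $+\infty$ otherwise. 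Imposing the constraint $c(T) = T$ that defines $\tilde{\cN}_T$, the contribution of $-c' + 1$ integrates to zero, so $J$ collapses to $h_T(c') = h_T(\tilde m)$.

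Since $\mu_\varepsilon$ and $c_\varepsilon$ are independent, tensorization produces a joint LDP for $(\mu_\varepsilon, c_\varepsilon)$ at rate $\lambda$ with rate function $\cH(\rho) + J(c)$. Applying the contraction principle through the pullback map $(\rho, c) \mapsto \tilde \rho$ then yields the desired LDP for $\tilde \mu_\varepsilon$ in $\tilde{\cN}_T$ with rate function $\cH(\rho) + h_T(\tilde m)$; a direct change of variables $t = c(s)$ in the entropy integral confirms that this sum coincides with the $\cH(\tilde \rho)$ defined in the statement, using that $\tilde \rho_s(\theta) = \tilde m(s) \rho_{c(s)}(\theta)$ and $\int_{S^1}\rho_t\,d\theta = 1$. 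The main technical obstacle will be verifying that the pullback map is continuous in the appropriate product topology (weak on $\cN_T$, uniform on $C([0, T])$) so the contraction principle applies, together with a careful treatment of the constraint $c(T) = T$: although $c_\varepsilon(T)$ is random and not exactly equal to $T$, the probability of deviations of size $\delta > 0$ is exponentially small at rate $\lambda$, so one can localize the LDP to the slice $\tilde{\cN}_T$ either by intersecting with closed sets or by contracting through the terminal-value projection onto $\{c(T)=T\}$.
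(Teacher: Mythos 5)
Your proposal is correct and follows essentially the same route as the paper's sketch: exploit the independence of the Poisson clock from the angular sequence, invoke Theorem~\ref{theorem_LDP} for the angular part, invoke a Mogulskii/Cram\'er LDP for the rescaled Poisson path with rate function $\int_0^T(\tilde m \log \tilde m - \tilde m + 1)\,dt$, and observe that the constraint $\int_0^T \tilde m\,dt = T$ cancels the linear part, leaving $h_T(\tilde m)$. Where you go beyond the paper's sketch is in spelling out the tensorization of the two independent LDPs and the passage through the contraction principle via the pullback map $(\rho,c)\mapsto\tilde\rho$, and in flagging the two genuine technical points left implicit in the paper (continuity of the pullback in the product of weak and uniform topologies, and the fact that $c_\varepsilon(T)$ is random rather than exactly $T$, so the restriction to $\tilde{\cN}_T$ needs an exponential-equivalence or slicing argument); these are exactly the points one would need to upgrade the paper's sketch to a full proof.
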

\noindent The proof is sketched in Section~\ref{SS:reparam}.

\subsubsection*{LDP for HL(0) shapes}
Theorem~\ref{theorem_LDP} and Proposition~\ref{P:LDPcontinuous} take into account the whole history of the growth process. It is also natural to consider only the shape of the cluster at a given time $T>0$, 
and ask for a corresponding large deviation event. One way to phrase such a result is in terms of the discrete time HL$(0)$-map $\Phi_{n}$ which induces a probability distribution on the space of normalized conformal maps
\begin{align*}
  \Sigma'_T = \big\{g \text{ univalent in $\Delta$, omits a neighborhood of $0$, and }g(z) = e^Tz + O(1/z) \big\},
\end{align*}
equipped with the locally uniform topology. 
% For $T>0$,  let $\bar{\Phi}_\varepsilon \in \Sigma'$ denote the map $\Phi_n$ defined in \eqref{Phi_n_mapping}, with $n = 4T/\varepsilon^2 + o(\varepsilon^2)$, renormalized so that $\bar{\Phi}_\varepsilon(z) = |\Phi'_n(\infty)|^{-1}\Phi_n(z)= z+O(1/z), \, z\to \infty$.
\begin{corollary}\label{thm:potato}
    The maps $\Phi_n$ satisfy an LDP in $\Sigma'_T$ equipped with the topology of uniform convergence on compact subsets of $\Delta$, with rate $ 4/\varepsilon^2$, as $\varepsilon \to 0$, and good rate function given by
    \begin{equation}\label{eq:Hstar}
    \cH^*_T(\Phi) = \inf_{\rho}\cH(\rho),
    \end{equation}
    where $e^T$ is the capacity of $K=\hat{\mathbb{C}} \smallsetminus \Phi(\Delta)$, and the infimum is over driving measures generating $\Phi$ via the Loewner--Kufarev equation on $[0,T]$. 
\end{corollary}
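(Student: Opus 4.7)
The plan is to apply the contraction principle to Theorem~\ref{theorem_LDP} via the Loewner--Kufarev solution map. Define
$$
\Psi_T : \cN_T \to \Sigma'_T, \qquad \Psi_T(\rho) = f_T^\rho,
$$
where $(f_t^\rho)_{t\in[0,T]}$ is the unique Loewner chain driven by $\rho$. By the correspondence recalled in Section~1.1, $\Phi_n$ coincides with $f_{T_n}^{\mu_\varepsilon}$, where $T_n = nc(\varepsilon) \to T$ with error $O(\varepsilon^2)$. After a negligible normalisation (extending $\mu_\varepsilon$ by $d\theta\,dt/2\pi$ on $[T_n, T]$ so that $\mu_\varepsilon \in \cN_T$, which merely multiplies $\Phi_n$ by the scalar factor $e^{T - T_n} = 1 + O(\varepsilon^2)$), the maps $\Phi_n$ and $\Psi_T(\mu_\varepsilon)$ are exponentially equivalent on the scale $4/\varepsilon^2$ in the locally uniform topology on $\Sigma'_T$, so it suffices to establish the LDP for $\Psi_T(\mu_\varepsilon)$.

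The key technical step is the continuity of $\Psi_T$ from the weak topology on $\cN_T$ to the topology of uniform convergence on compact subsets of $\Delta$. Suppose $\rho^{(k)} \to \rho$ weakly in $\cN_T$. For each fixed $z \in \Delta$ the Herglotz kernel $\theta \mapsto (z + e^{i\theta})/(z - e^{i\theta})$ is continuous and bounded, so testing against a continuous approximation of $\mathds{1}_{[0,t]}(s)$ (and using monotonicity in $t$ of the integrated Herglotz functions to pass from a.e.\ $t$ to every $t$) yields
$$
\int_0^t p_s^{(k)}(z)\,ds \longrightarrow \int_0^t p_s^\rho(z)\,ds, \qquad t\in[0,T],\ z\in \Delta.
$$
Combining this with a Gr\"onwall-type stability estimate for the integrated Loewner--Kufarev equation, together with uniform Lipschitz control on $p_s^\rho(\cdot)$ and standard Koebe distortion bounds on the derivative factor $zf'_t(z)$, upgrades the pointwise convergence to locally uniform convergence of $f_T^{(k)}$ to $f_T^\rho$. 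This stability statement is essentially contained in \cite{viklund_loewnerkufarev_2024}.

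With continuity of $\Psi_T$ in hand, the contraction principle (see \cite{dembo2009large}) applied to Theorem~\ref{theorem_LDP} directly produces an LDP for $\Psi_T(\mu_\varepsilon)$ in $\Sigma'_T$ with rate $4/\varepsilon^2$ and good rate function
$$
\cH^*_T(\Phi) = \inf\{\cH(\rho) : \rho \in \cN_T,\ \Psi_T(\rho) = \Phi\},
$$
which is precisely \eqref{eq:Hstar} (with the convention $\inf \emptyset = +\infty$). The main obstacle I expect is establishing continuity of $\Psi_T$ on all of $\cN_T$ equipped with the weak topology: discrete (atomic) driving measures and smooth measures can be weakly close but produce chains that differ markedly on short time scales, and it is the time-integration built into the Loewner--Kufarev equation that reconciles this. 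The exponential equivalence needed to pass from $\Psi_T(\mu_\varepsilon)$ to $\Phi_n$ is comparatively routine, since the discrepancy between $T_n$ and $T$ vanishes deterministically.
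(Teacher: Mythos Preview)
Your proposal is correct and follows essentially the same approach as the paper: apply the contraction principle to Theorem~\ref{theorem_LDP} via continuity of the Loewner--Kufarev solution map $\rho \mapsto f_T^\rho$ from the weak topology on $\cN_T$ to the locally uniform topology on $\Sigma'_T$. The paper's proof is a two-line appeal to the contraction principle together with a citation of \cite{johansson2012scaling} for the continuity of the Loewner map; you sketch the continuity argument yourself (citing \cite{viklund_loewnerkufarev_2024} instead) and are additionally careful about the discrepancy $T_n \neq T$, which the paper does not address explicitly.
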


% \begin{corollary}\label{thm:potato2}
%     Let $(\Sigma', \mathcal{B}_{\Sigma'},\mathbb{P}_{n})$ be the probability space induced by the map $\Phi_{n}$. There exists a good rate function $H^*:\Sigma'\to[0,\infty]$ such that for every family $\mathcal{F}\in\mathcal{B}_{\Sigma'}$
%     \begin{align*}
%         -\inf_{\Phi\in int\mathcal{F}}H^*(\Phi)\le \lowlim\limits_{\substack{n\to\infty\\cn\to T}} \frac{1}{n}\log \mathbb{P}_{n}\left[\mathcal{F}\right] \le \uplim\limits_{\substack{n\to\infty\\cn\to T}} \frac{1}{n}\log \mathbb{P}_{n}\left[\mathcal{F}\right] \le  - \inf_{\Phi\in cl\mathcal{F}}H^*(\Phi).
%     \end{align*}
% The rate function is given by
%     \[
%     H^*(\Phi) = \inf_{\rho}H(\rho),
%     \]
%     where the infimum is over driving measures generating $\Phi$. [This needs to be rephrased.]
% \end{corollary}
\begin{proof}The result follows from Theorem~\ref{theorem_LDP} and the contraction principle \cite[Theorem 4.2.1]{dembo2009large}. Indeed, solutions to the Loewner--Kufarev equation depend continuously on their driving measures with respect to the weak topology and the locally uniform topology, respectively, see \cite{johansson2012scaling}.
\end{proof}
\begin{remark}
    The minimum in \eqref{eq:Hstar} is achieved by some measure in $\mathcal{N}_T$, see Lemma~\ref{lem:optimal-measure}.
\end{remark}
While the question --- how hard it is to build the shape $K$ using HL$(0)$ model --- is answered in principle by Corollary~\ref{thm:potato}, the answer is not very explicit. We are led to the following.
\begin{problem}\label{P:main}
    What are the shapes whose cost (rate function) is finite? And given such a shape, what growth processes achieve the infimum defining $\cH^*_T(\Phi)$ in \eqref{eq:Hstar}? 
\end{problem}

\subsection{Finite entropy shapes }
We are interested in describing the class of shapes that can be grown by a measure of finite entropy, where by shape we mean the boundary of a fixed compact hull, and by entropy we mean the large deviation functional $\cH$ appearing in Theorem~\ref{theorem_LDP}.
\subsubsection*{Loewner--Kufarev entropy and energy}
% Let us adopt a more geometric point of view, and 
We begin by describing some parallels with the notion of Loewner--Kufarev  {energy}, see \cite{APW, viklund_loewnerkufarev_2024}, with a view of discussing shapes with finite entropy below. To simplify this comparison it is useful to consider a slightly more general setting, in which, instead of a family of growing hulls $(K_t)_{t\in [0, T]}$ of capacity $e^t$ for $t\in [0,T]$, and starting at time 0 from $K_0 = \mathbb{D}$, we have a growing family $(K_t)_{t\in \mathbb{R}}$ indexed by all of $\mathbb{R}$, and of capacity $e^t$ for $t\in \mathbb{R}$. In this manner, the hulls start at time $t = -\infty$ from a `single seed' (the origin), in the sense that $\cap_{t\in \mathbb{R}} K_t = \{ 0 \}$.

We thus introduce an enlarged space on which the Loewner--Kufarev entropy will be defined. 
We first define \[\mathcal{N} = \{\mu\in \mathcal{M}(S^1\times \mathbb{R}): \mu(S^{1}\times I) = |I| \text{ for any interval } I \subset \mathbb{R}\}.\]
Note that $\mathcal{N}_T$ can be embedded in $\mathcal{N}$ by extending $\mu \in \mathcal{N}_T$ to a measure in $\mathcal{N}$ by defining it to equal $d\theta dt/2\pi$ where not originally defined. We will always consider $\mathcal{N}_T$ embedded in $\mathcal{N}$ in this way.

A measure $\mu \in \cN$ defines a growing family of hulls $(K_t)_{t\in \mathbb{R}}$, known as a whole-plane Loewner chain, via the Loewner--Kufarev equation and the initial condition
$$\lim_{t\to -\infty} e^{t}f_t(z) = z,
$$
where $f_t: \Delta \to \Delta_t = \hat{\mathbb{C}} \smallsetminus K_t$ is the corresponding conformal map. As in \cite{viklund_loewnerkufarev_2024}, under regularity assumptions, one may think of $\partial K_t$ as defining a continuous foliation of $\mathbb{C} \smallsetminus \{0\}$. (However, as we will see, finite entropy measures do not in general give rise to foliations in the sense of \cite{viklund_loewnerkufarev_2024}.)
See Section~7.1 of \cite{viklund_loewnerkufarev_2024} for a detailed description of this setup. 

% It will also be convenient to consider driving measures with variable total mass: then we introduce 
% \begin{align*}
% \tilde{\mathcal{N}} :
% &= \{\mu\in \mathcal{M}(S^1\times \mathbb{R}): \text{ there exists a differentiable function $m: \mathbb{R} \to \mathbb{R}$ such that }\\
% & \hspace{3.5cm} \mu([0,2\pi]\times I) = \int_I m_t dt \text{ for any interval } I \subset \mathbb{R}\}.
% \end{align*}
% We may disintegrate a measure $\mu \in \tilde{\cN}$ as $\mu = \mu_t(\theta) d\theta dt$, and $m_t $ is then the mass of the measure $\mu_t$ for almost every $t\in \mathbb{R}$. 
%\nb{initial seed, cf. Section 7.1 of \cite{VW}.}

%Recall the definition of the entropy associated to the Loewner chain generated by $\rho$. 
We are now ready to extend our definition of entropy. 
%\[
%h(f) = \frac{1}%{2\pi}\int_{S^1}f(\theta) \log f(\theta) d\theta.
%\]
% We start by recording some simple facts.

\begin{definition}
Let $\rho \in \mathcal{N}$ and write $\rho = \rho_t(\theta) d\theta dt = \bar{\rho}_t(\theta) d\theta dt/2\pi$ by disintegration. 
We define the {  Loewner--Kufarev entropy} of $\rho$ by 
\[
\cH(\rho) =  \frac1{2\pi}\iint_{S^1 \times \mathbb{R}}  \bar \rho_t(\theta) \log \bar \rho_t ( \theta) d\theta  dt
\]
whenever this is defined, and set $\cH(\rho) = +\infty$ otherwise. 
\end{definition}
\noindent Note that this definition is consistent with \eqref{LK-entropy} for a measure in $\mathcal{N}_T$ extended to a measure in $\mathcal{N}$ as explained above.
% Note that if $\rho \in \mathcal{N}$, then 
% \begin{align}
%     \cH(\rho) &=  \frac1{2\pi}\iint_{\mathbb{R}\times S^1}  \bar \rho_t(\theta) \log \bar \rho_t ( \theta) d\theta  dt \nonumber \\
%     & = \int_{\mathbb{R}}h_{S^1}(\bar{\rho}_t) dt,\label{eq:entropy_integrated}
% \end{align}
% whenever this is defined, and $\cH(\rho) = \infty$ otherwise. 

% Here 
% $$h_{S^1} (f) = \frac1{2\pi} \int_{0}^{2\pi} f(\theta) \log f(\theta) d\theta
% $$ 
% is the differential entropy of $f:S^1 \to [0, \infty)$ on the unit circle.
% \end{definition}

% The expression \eqref{eq:entropy_integrated} is only written explicitly in order to facilitate comparisons with the Loewner--Kufarev energy, which we now introduce. 
Next, for $f \in W^{1,2}(S^1)$ (absolutely continuous and derivative in $L^2$) let  \begin{align*}
\mathcal{D}(f) = \frac{1}{2}\int_{S^1}|f'(\theta)|^2d\theta
\end{align*}
be the one-dimensional Dirichlet energy.
\begin{definition}
Let $\rho \in \mathcal{N}$. 
The {  Loewner--Kufarev energy} of $\rho$ is defined by
\begin{align*}
    S(\rho) = \int_{\mathbb{R}}  \mathcal{D}(\sqrt{\rho_{t}})dt,
\end{align*}
whenever $\rho = \rho_t(\theta) d\theta dt$ and set to $\infty$ otherwise. We write $S_+$ for the integral over $\mathbb{R}_+$.
\end{definition}
The Loewner--Kufarev energy arises as the large deviation rate function for Schramm-Loewner evolution curves, SLE$_\kappa$, as the parameter $\kappa \to +\infty$. More precisely, an LDP in a similar setting as Theorem~\ref{theorem_LDP} was proved in \cite{APW} with rate function $S(\rho)$. That result was derived from an LDP for the local time of Brownian motion rather than from Sanov's theorem as in the case of HL$(0)$. 

The Loewner hulls generated by measures of finite Loewner--Kufarev energy were studied in depth in \cite{viklund_loewnerkufarev_2024}. These enjoy some remarkable properties that we summarize now. Let $\gamma$ be a Jordan curve and write $f:\mathbb{D} \to D,g:\Delta \to D^*$ for the conformal maps onto the bounded and unbounded component of $\mathbb{C} \smallsetminus \gamma$ respectively. Define the {  Loewner energy} of $\gamma$ by
\[
I^L(\gamma) = \frac{1}{\pi}\int_{\mathbb{D}}\left|\frac{f''}{f'}\right|^2 dz^2 + \frac{1}{\pi}\int_{\Delta}\left|\frac{g''}{g'}\right|^2 dz^2 + 4\log\frac{|f'(0)|}{|g'(\infty)|}.
\]
This quantity arises as the large devation rate function for SLE$_\kappa$, as $\kappa \to 0+$ \cite{Wang2019LoewnerEnergy, Wang2022survey}.
We say that $\gamma$ is a {Weil-Petersson quasicircle}\footnote{Recall that a quasicircle is the image of the unit circle under a quasiconformal homoemorphism of the plane.} if $I^L(\gamma) < \infty$. This interesting class of rectifiable quasicircles has many different characterizations, see \cite{bishop2025}. For now we just note that any $C^{3/2 + \epsilon}$, $\epsilon > 0$, Jordan curve belongs to this class and while no Weil-Petersson quasicircle has a corner, it may still fail to be $C^1$. 
% \nb{Would be nice to add the relation between Loewner energy and Loewner--Kufarev energy.}

It was proved in \cite{viklund_loewnerkufarev_2024} that the interfaces $(\partial \Delta_t)_{t \in \mathbb{R}}$ corresponding to a Loewner chain driven by $\rho$ with $S(\rho)< \infty$ form a family of Weil-Petersson quasicircles that continuously foliate $\mathbb{C} \smallsetminus \{0\}$ as $t$ ranges over the reals. We refer to such a collection of Jordan curves as a foliation (see \cite{viklund_loewnerkufarev_2024} Section~2 for more details and a more precise definition). In fact, given any Weil-Petersson curve $\gamma$ (separating $0$ from $\infty$), there exists a measure generating it as a shape and one obtains the Loewner energy by minimizing the Loewner--Kufarev energy,
    \[
    \inf S(\rho) = \frac{1}{16}I^L(\gamma)- \frac{1}{8}\log \frac{|f'(0)|}{|g'(\infty)|} ,
    \]
    where the infimum is taken over $\rho \in \mathcal{N}$ generating $\gamma$ as a shape. See Theorem~1.4 of \cite{viklund_loewnerkufarev_2024}. We will discuss the optimal $\rho$ below.

\medskip We now return to the problem of describing the class of shapes that have finite entropy. As we have seen, a given shape may be described in several ways, for instance by a measure $\rho$ generating it at a given time $T$, by the corresponding conformal map $\Phi$, or by the boundary of the cluster $\partial \Phi(\Delta)$. Given any such description, we slightly abuse notation and write \[\mathcal{H}^*(\rho)=\mathcal{H}^*(\Phi)=\mathcal{H}^*(\partial \Phi(\Delta)):=\inf_\mu \mathcal{H}(\mu)\] where $\mu$ ranges over those measures in $\mathcal{N}$ that generate the given shape. (We treat the special time $T$ for $\rho$ as implicit in the notation.)

We point out that both the Loewner energy $I^L$ (associated to a Jordan curve) and the minimal entropy $\cH^*$ (associated to a compact hull, whose boundary may or may not be a Jordan curve) are nonnegative functionals measuring how much a given shape deviates from being a circle (or disc). This raises the questions of how qualitatively different these two functionals are. The next result, based on the log-Sobolev inequality, gives the following relation.
%which we call energy-entropy inequality. 

\begin{theorem}\label{T:energy_entropy}
    Let $\rho \in \mathcal{N}$. Then
    \begin{align}
    \cH(\rho) \le 2 S(\rho).
\end{align}
In particular, if $K$ is any compact hull bounded by a Weil--Petersson quasicircle, then $K$ can be generated by a measure with finite Loewner--Kufarev entropy.
\end{theorem}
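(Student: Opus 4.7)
The plan is to reduce the inequality to a pointwise-in-$t$ log-Sobolev inequality on the circle and then integrate. Since both sides decompose,
$\mathcal{H}(\rho) = \int_{\mathbb{R}} \mathcal{H}_t\,dt$ and $S(\rho) = \int_{\mathbb{R}} \mathcal{D}(\sqrt{\rho_t})\,dt$ with $\mathcal{H}_t := \tfrac{1}{2\pi}\int_{S^1}\bar\rho_t\log\bar\rho_t\,d\theta$, it suffices by Fubini to show, for almost every $t$,
\[
\mathcal{H}_t \le 2\,\mathcal{D}(\sqrt{\rho_t}),
\]
and then integrate over $t$.

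To identify this pointwise inequality, fix $t$ and let $dP = d\theta/(2\pi)$ denote the uniform probability measure on $S^1$. Setting $g := \sqrt{\bar\rho_t}$, we have $\int_{S^1} g^2\,dP = 1$, and a direct computation---tracking the $2\pi$ in $dP$ and the $\tfrac{1}{2}$ in the definition of $\mathcal{D}$---yields
\[
\mathcal{H}_t = \int_{S^1} g^2\log g^2\,dP = \mathrm{Ent}_{dP}(g^2), \qquad 2\mathcal{D}(\sqrt{\rho_t}) = \int_{S^1}(g')^2\,dP.
\]
The pointwise inequality then reduces to the log-Sobolev inequality on $S^1$ with uniform probability measure applied to $g$, which is classical (see, e.g., the references on log-Sobolev inequalities on compact Riemannian manifolds used in \cite{dembo2009large}). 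Integrating over $t \in \mathbb{R}$ delivers $\mathcal{H}(\rho) \le 2\, S(\rho)$.

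For the second assertion, suppose $K$ is a compact hull bounded by a Weil--Petersson quasicircle $\gamma$. By Theorem~1.4 of \cite{viklund_loewnerkufarev_2024}, $\gamma$ is realized as the shape at some time of a whole-plane Loewner chain driven by some measure $\rho \in \mathcal{N}$ with $S(\rho) < \infty$. Combining with the inequality just established, $\mathcal{H}(\rho) \le 2\, S(\rho) < \infty$, so $K$ is generated by a measure of finite Loewner--Kufarev entropy.

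The delicate step is the bookkeeping of the normalizing factors so that the log-Sobolev constant yields exactly the factor $2$ in the theorem; once the correct identification $2\mathcal{D}(\sqrt{\rho_t}) = \int (g')^2 dP$ is secured, the inequality is a textbook application of log-Sobolev on $S^1$, and the geometric corollary is immediate from the Viklund--Wang construction of finite-energy measures for Weil--Petersson shapes.
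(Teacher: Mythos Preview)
Your proof is correct and follows the paper's approach exactly: both reduce the inequality to a pointwise-in-$t$ logarithmic Sobolev inequality on the circle (the paper invokes the \'Emery--Yukich inequality with the substitution $f=\sqrt{2\pi\rho_t}$, which is precisely your $g=\sqrt{\bar\rho_t}$) and then integrate over $t\in\mathbb{R}$. The second assertion is likewise obtained in the paper from Theorem~1.4 of \cite{viklund_loewnerkufarev_2024}, exactly as you do.
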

\noindent The result is proved in Section~\ref{SS:energyinequality}.

%We shall describe a large family of shapes with finite entropy and we discuss several explicit examples.  We collect here some facts derived below. By ``shape'' we mean

%The class of Weil-Petersson quasicircles contains all smooth Jordan curves and is defined in Section~\ref{section_finite_entropy}

    Let $\mathcal{E}$ be the set of shapes (boundary of compact hulls $K$) that can be generated by Loewner--Kufarev evolution driven by a finite entropy measure. The theorem below summarizes some facts about $\mathcal{E}$ proved in this paper, the first of which follows immediately from Theorem~\ref{T:energy_entropy}. We define the class of Becker quasicircles in Section~\ref{sect:becker}.

\begin{theorem}\label{prop:shapes}
 Let $\mathcal{E}$ be as above. Then the following holds.
    \begin{enumerate}[(i)]
        \item $\mathcal{E}$ contains every Weil-Petersson quasicircle separating $0$ from $\infty$. 
         \item $\mathcal{E}$ contains every finite time Becker quasicircle. 
                \item $\mathcal{E}$ contains a Jordan curve with a corner.
          %      \item $\mathcal{E}$ contains a Jordan curve with an outward-pointing corner.
      %  \item $\mathcal{E}$ contains a Jordan curve with an inward-pointing cusp.
        \item $\mathcal{E}$ contains a Jordan curve with a cusp.
        \item $\mathcal{E}$ contains a non-simple curve.
       % \item There exists a Jordan curve that is not contained in $E$.
    \end{enumerate}
\end{theorem}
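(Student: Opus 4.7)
The plan is to verify each item separately, relying on the inequality $\cH(\rho) \le 2 S(\rho)$ from Theorem~\ref{T:energy_entropy} for the first two parts and on explicit driving measures for the remaining ones.

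Part (i) is immediate from Theorem~\ref{T:energy_entropy} combined with Theorem~1.4 of \cite{viklund_loewnerkufarev_2024}: any Weil--Petersson quasicircle separating $0$ from $\infty$ arises as the shape of some measure $\rho \in \mathcal{N}$ with $S(\rho) < \infty$, so $\cH(\rho) \le 2S(\rho) < \infty$. For part (ii), I would use that the definition of a finite-time Becker quasicircle in Section~\ref{sect:becker} corresponds to a Loewner chain whose driving density $\bar\rho_t(\theta)$ is uniformly bounded on $S^1 \times [0,T]$ (this is essentially the content of Becker's univalence criterion). Since the function $x \mapsto x \log x$ is bounded on any compact subset of $[0,\infty)$, a direct integration then yields $\cH(\rho) < \infty$, so the curve lies in $\mathcal{E}$.

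For parts (iii)--(v), the strategy is to construct explicit driving measures producing the desired shapes while keeping the entropy finite. A flexible template is the uniform bump
\[
\bar{\rho}_t(\theta) = \frac{\pi}{\epsilon_t}\, \mathds{1}_{\{|\theta - W_t|<\epsilon_t\}},
\]
of half-width $\epsilon_t$ centered at $W_t \in S^1$, which has entropy $\cH(\rho) = \int_0^T \log(\pi/\epsilon_t)\, dt$, finite whenever $\log(1/\epsilon_t)$ is integrable on $[0,T]$. Using variants of this construction, I would proceed as follows. For a corner (iii), one takes $W_t$ constant and $\epsilon_t$ a small constant on $[0,T]$, generating a fat slit whose tip acquires a corner at terminal time. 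For a cusp (iv), one lets $\epsilon_t \downarrow 0$ at a suitable rate such as $\epsilon_t \sim (T-t)^\alpha$ while $W_t$ stays fixed, so the driving pinches to a point as $t \uparrow T$ and a tip with vanishing opening angle forms. For a non-simple curve (v), one uses two bumps with centers $\pm W_t$ that approach each other, creating two fingers that meet and enclose a pocket; at the meeting time the cluster boundary fails to be a Jordan curve.

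The main difficulty is not the entropy estimate, which is elementary in every case, but the geometric analysis confirming that the proposed drivers actually produce a corner, a cusp, and a non-simple boundary respectively. For (iv) and (v) in particular, one needs quantitative control on the tip geometry of the Loewner--Kufarev flow under rapidly degenerating drivers; this likely requires a careful comparison with exactly solvable slit or wedge Loewner chains together with conformal distortion estimates to pin down both the opening angle at the tip and the precise meeting time of two growing tips.
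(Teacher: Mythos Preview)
Your treatment of (i) and (ii) matches the paper's: part (i) is exactly the combination of Theorem~\ref{T:energy_entropy} with Theorem~1.4 of \cite{viklund_loewnerkufarev_2024}, and part (ii) is the observation that Becker's condition forces $\bar\rho_t\in L^\infty(S^1)$ with a bound depending only on $\kappa$, so $\bar\rho_t\log\bar\rho_t$ is bounded and the entropy is finite (this is Proposition~\ref{prop:becker}).

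For (iii)--(v) your strategy diverges from the paper's, and as written it has a genuine gap. You choose densities for which the entropy bound is trivial and defer the geometry, but you do not actually establish that your bump drivers produce a corner, a cusp, or a self-intersection; you only sketch what you expect and acknowledge that ``quantitative control on the tip geometry'' is still needed. In particular, your claim that a constant-width bump yields a corner at its tip is unsupported (such a density satisfies Becker's condition, so the resulting curve is a quasicircle, but nothing in your argument pins down an actual corner), and the cusp and collision scenarios are likewise only heuristic. The paper goes the opposite way: it uses exactly solvable Loewner chains --- orthogonal and tangent disks from \cite{theodosiadis}, and a Poisson-kernel density with $R(t)=1/t$ --- for which the conformal map $f_t$ is known in closed form. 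The geometry (corner, inward cusp, non-Jordan boundary, outward cusp) is then immediate from the explicit image domain, and the only work is a direct computation showing that the associated $\rho_t$, although it localizes to a point as $t\to 0$ or $t\to 1$, still has $h_{S^1}(\rho_t)$ integrable in $t$. This trades your hard step (geometry from a given driver) for an elementary one (entropy of an explicit density), and is what makes the argument complete.
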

Since no quasicircle has a cusp, the theorem shows in particular that $\mathcal{E}$ contains a Jordan curve which is not a quasicircle. We do not know if $\mathcal{E}$ contains all quasicircles. The proof will be completed at the end of Section~\ref{section_finite_entropy}.

% To prove that a given shape is not in $\mathcal{E}$, one has to give a uniform lower bound. Such bounds are significantly harder to obtain, because they require controlling \emph{all} ways of obtaining a given shape via Loewner--Kufarev evolution, where the growth process is arbitrary subject only to its terminal shape. The most basic version of this question is the following: \textbf{Question.} Does there exist a compact hull $K$ such that $\cH^*(K) = \infty$? 
%\begin{proof}
%    See Section~\ref{section_finite_entropy}.
%\end{proof}

\subsection{Minimal entropy driving measures}
Given a shape with finite entropy, can one describe a measure of minimal entropy that generates it via the Loewner equation? Is such a measure unique? This optimization problem is reminiscent of an entropic optimal transport problem with a rather complicated constraint. Solving this in some generality appears difficult, and we do not answer these questions in this article. However, we give a counterexample that shows that a natural guess which is correct for the Loewner--Kufarev energy (given by the ``equipotentials'' of $K$) cannot in general be the answer to this optimization problem. 
% Essentially, this counterexample also shows at the same time that the converse inequality to Theorem \ref{T:energy_entropy} cannot hold. 
In Section~\ref{section:problems} we study the minimal entropy functional $\cH^*(\Phi)$ (which can be viewed as a function of the shape $K$) along with several related optimization problems.

\subsection{Organization of the paper} The paper is organised as follows. Section~\ref{sect:prelim} contains basic properties of the entropy functionals. In Section \ref{section_proof} we give a proof of the large deviations theorem for HL$(0)$, Theorem~\ref{theorem_LDP}, and a sketch for the proof of the continuous-time LDP, Proposition~\ref{P:LDPcontinuous}. In Section \ref{section_finite_entropy} we discuss the proofs of the geometric properties:
% of Loewner--Kufarev entropy, starting in Section \ref{SS:reparam} with the effect of time-reparametrization and the related large deviation result in continuous time (Proposition \ref{P:LDPcontinuous}, whose proof we only sketch). 
Section \ref{SS:energyinequality} contains the short proof of the entropy-energy inequality, Theorem~\ref{T:energy_entropy}, Section~\ref{sect:becker} discusses Becker quasicircles, and we construct explicit examples of shapes with finite entropy in Section \ref{SS:examples}. Finally, in Section \ref{section:problems} we first recall from \cite{viklund_loewnerkufarev_2024} precisely how the equipotential foliation minimises the Loewner--Kufarev energy, and describe in Proposition \ref{problem:easy-energy} why that cannot be the case for the entropy. We also describe in Section \ref{SS:non-analytic} a non-analytic version of Loewner's equation and connections between the question of entropy minimization and a transport equation.  

\subsection*{Acknowledgements} N.B.  acknowledges the support from the Austrian Science Fund (FWF) grants 10.55776/F1002 and 10.55776/PAT1878824. V.G. acknowledges support from the Swedish Research Council and the Göran Gustafsson Foundation for Research in Natural Sciences and Medicine. F.V. acknowledges support from the Knut and Alice Wallenberg Foundation, the Göran Gustafsson Foundation for Research in Natural Sciences and Medicine, and the Simons Foundation. 

\medskip We thank Ellen Krusell, Alan Sola and Yilin Wang for discussions and comments on a draft of the paper, and Vittoria Silvestri for discussions in early phases of this project.

\section{Preliminaries}\label{sect:prelim}
%\subsection{Time-reparametrization: proof of Propositions \ref{lem:entropy-facts}}
\subsection{Basic properties of the entropy}
The following proposition summarizes some easy facts about the entropy $\mathcal{H}$ defined in \eqref{LK-entropy}. We recall that
\begin{align*}
\tilde{\mathcal{N}} :
&= \big\{\mu\in \mathcal{M}(S^1\times \mathbb{R}): \text{ there exists a differentiable function $m: \mathbb{R} \to \mathbb{R}$ such that }\\
& \hspace{3.5cm} \mu(S^{1}\times I) = \int_I m_t dt \text{ for any interval } I \subset \mathbb{R}\big\}
\end{align*}
and $\mathcal{N}$ is the subset of $\tilde{\mathcal{N}}$ where $m_t  \equiv 1$.
\label{SS:reparam}
\begin{proposition}
    \label{lem:entropy-facts}
    Let $\rho \in \tilde{\mathcal{N}}$. Then the following holds.
    \begin{enumerate}[(i)]
    \item{
    If $\rho \in \mathcal{N}$, then $\cH(\rho) \ge 0$ with equality if and only if $\rho \equiv d\theta dt/2\pi$;}
    \item{$\cH$ is lower semi-continuous on $\cN$;}
    \item{Let $s \mapsto t(s)$ be an increasing diffeomorphism of $\mathbb{R}$.  If $\tilde \rho$ is the time-changed measure \[\tilde \rho_s = t'(s) \rho_{t(s)},\] then 
% \[
% H(\tilde \rho_s dsd\theta \mid dsd\theta ) = H(\rho_t dtd\theta \mid dtd\theta) + H(dt \mid ds).
% \]
\begin{equation}\label{eq:Hreparam}
\cH(\tilde \rho ) = \cH(\rho) + h_{\mathbb{R}}(\tilde m) - h_{\mathbb{R}} ( m),
\end{equation}
whenever $h_{\mathbb{R}}( \tilde m) $ and $h_{\mathbb{R}} ( m) $ are well-defined, where 
\[
h_{\mathbb{R}}(f) = \int_\mathbb{R} f \log f dt
\]
is the differential entropy of $f:\mathbb{R} \to \mathbb{R}$ on $\mathbb{R}$.
}
    \end{enumerate}
\end{proposition}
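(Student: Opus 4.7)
My plan is to handle the three items in order, using pointwise Jensen for (i), a truncation plus variational-formula argument for (ii), and a direct change-of-variables computation for (iii).

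For (i), I would fix $t$ and apply Jensen's inequality to the convex function $\Phi(x)=x\log x$ with respect to the probability measure $d\theta/(2\pi)$ on $S^{1}$. Since $\rho\in\cN$ satisfies $\int_{S^{1}}\bar\rho_t(\theta)\,d\theta/(2\pi)=1$, one obtains
\[
\tfrac{1}{2\pi}\int_{S^{1}}\bar\rho_t(\theta)\log\bar\rho_t(\theta)\,d\theta \;\ge\; \Phi(1) = 0,
\]
with equality if and only if $\bar\rho_t\equiv 1$. Integrating in $t$ and using strict convexity of $\Phi$ to track the equality case yields (i).

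For (ii), I would work slab by slab. Set $\cH_I(\rho):=\frac{1}{2\pi}\iint_{S^{1}\times I}\bar\rho_t\log\bar\rho_t\,d\theta\,dt$ for bounded intervals $I$. On each finite slab $S^{1}\times I$, both $\rho|_I$ and the reference measure $\mu_0|_I$ have total mass $|I|$, so $\cH_I$ is (up to normalization) a relative entropy between probability measures and admits the Donsker--Varadhan representation as a supremum of weakly continuous linear functionals; hence each $\cH_I$ is lower semicontinuous. By (i) the slice integral in $\theta$ is nonnegative, so $I\mapsto\cH_I(\rho)$ is monotone and $\cH(\rho)=\sup_I\cH_I(\rho)$ is lower semicontinuous as a supremum of lower semicontinuous functionals. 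The restriction step is legitimate because all $t$-marginals are Lebesgue, so no mass concentrates on $\partial I$ in the weak limit.

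For (iii), I would compute directly. From $\tilde\rho_s=t'(s)\rho_{t(s)}$ one reads off $\tilde{\bar\rho}_s(\theta)=t'(s)\bar\rho_{t(s)}(\theta)$ and $\tilde m_s=t'(s)m_{t(s)}$. Expanding $\log\tilde{\bar\rho}_s=\log t'(s)+\log\bar\rho_{t(s)}$ in the definition of $\cH(\tilde\rho)$ splits the integral into two pieces: the $\log\bar\rho_{t(s)}$ piece becomes $\cH(\rho)$ after the change of variables $t=t(s)$, and the $\log t'(s)$ piece reduces to $\int t'(s)m_{t(s)}\log t'(s)\,ds$. Splitting $\log(t'(s)m_{t(s)})=\log t'(s)+\log m_{t(s)}$ inside $h_{\mathbb{R}}(\tilde m)$ and changing variables once more in the $\log m_{t(s)}$ term identifies this remainder with $h_{\mathbb{R}}(\tilde m)-h_{\mathbb{R}}(m)$. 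The main point requiring care is the bookkeeping in this step: one must consistently distinguish the density $\bar\rho_t$ from the measure $\rho_t$ and track how the Jacobian $t'(s)$ appears both inside the logarithm and in the volume element. Parts (i) and (ii) are essentially standard once Jensen and the truncation are in place, so (iii) is the only place where a mildly delicate computation is needed.
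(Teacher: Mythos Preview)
Your arguments for (i) and (iii) are essentially the same as the paper's: Jensen for the convex function $x\log x$ in (i), and the same change-of-variables bookkeeping for (iii).

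For (ii) you take a genuinely different route. The paper argues by two applications of Fatou's lemma: first that each slice functional $h_{S^1}$ is lower semicontinuous on probability measures on $S^1$ (using that $x\log x+e^{-1}\ge 0$), and then that $\cH(\rho)=\int_{\mathbb{R}}h_{S^1}(\bar\rho_t)\,dt$ inherits lower semicontinuity from the slices, again by Fatou. Your approach instead truncates to finite cylinders $S^1\times I$, identifies $\cH_I$ as a relative entropy between finite measures of equal mass, invokes the Donsker--Varadhan variational formula to get lower semicontinuity of $\cH_I$ directly in the weak topology on $\cN$, and then passes to the supremum over $I$ using the nonnegativity from (i). Your method is somewhat more self-contained: the variational formula gives l.s.c.\ in one stroke without needing to argue that weak convergence of $\rho^{(n)}$ on the cylinder implies anything about the disintegrated slices $\rho^{(n)}_t$, which is the implicit step underlying the paper's second Fatou. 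The paper's argument is shorter but leans on that disintegration continuity without comment; yours trades brevity for a cleaner justification. Your observation that the Lebesgue $t$-marginal prevents boundary mass at $\partial I$ is exactly what is needed to make the restriction map continuous on $\cN$, so the slab argument goes through.
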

\begin{proof}[Proof of Proposition \ref{lem:entropy-facts}]
$(i)$ This follows immediately from Jensen's inequality and the fact that $x\mapsto x \log x$ is convex. 

$(ii)$ Let $\rho \in \mathcal{N}$. Recall that $h_{S^1}(\bar{\rho}_t)= \tfrac1{2\pi} \int_{S^1} \bar{\rho}_t \log \bar{\rho}_t d\theta$ is the relative entropy of $\bar{\rho}_t d\theta$ with respect to $d\theta / 2\pi$. Since $x\log x  + e^{-1}$ is lower semi-continuous and non-negative it follows from Fatou's lemma that $h_{S^1}$ is lower semi-continuous.

Moreover, since $\cH( \rho) = \int_{\mathbb{R}} h_{S^1} ( \bar \rho_t) dt$ and $h_{S^1}(\bar \rho_t)  \ge 0$ (this is where we use the assumption that $\rho \in \mathcal{N}$), we conclude that $\cH$ is lower semi-continuous on $\mathcal{N}$ by another application of Fatou's lemma, as desired.

% We use the Donsker-Varadhan formula,
% \[
% \int_{S^1} \bar{\rho}_t \log \bar{\rho}_t d\theta=\sup_{f  \in C(S^1)} \Lambda(f; \bar{\rho}_t),
% \]
% where
% \[\Lambda(f; \bar{\rho}_t) = \int_{S^1} f \bar{\rho}_t d\theta - \log \int_{S^1} e^f d\theta / 2\pi. \]
% Note that $\bar{\rho_t} \mapsto \Lambda(f; \bar{\rho}_t)$ is linear

$(iii)$ Suppose that $\tilde \rho \in \tilde{\mathcal{N}}$ and that $h_{\mathbb{R}} (\tilde m)$ as well as $h_{\mathbb{R}} (\tilde m)$ are well defined. 
We have 
\begin{align*}
\cH(\tilde{\rho}) & = \iint_{S^1 \times \mathbb{R}}\tilde{\rho}_s\log (2\pi \tilde{\rho}_s) d\theta ds \\
& =  \iint_{S^1 \times \mathbb{R}}\rho_{t(s)} \log(2\pi t'(s)\rho_{t(s)})  t'(s) d\theta  ds\\
& = \iint_{S^1 \times \mathbb{R}}\rho_{t} \log (2 \pi\rho_{t}) d\theta dt + \iint_{S^1 \times \mathbb{R}}\rho_{t(s)} t'(s) \log t'(s)  d\theta ds  \\
& = \cH(\rho) + \int_{ \mathbb{R}}m_{t(s)} t'(s) \log t'(s)  ds \\
& = \cH(\rho) + h_{\mathbb{R}} ( \tilde m) - \int_{\mathbb{R}} m_{t(s)} t'(s) \log m_{t(s)} ds\\
& = \cH(\rho) +h_{\mathbb{R}}(\tilde m) - h_{\mathbb{R}}( m),
\end{align*}
where we used that $\int_{S^1}\rho_t(\theta) d\theta =m_t$ for a.e. $t\in \mathbb{R}$.
\end{proof}
% \begin{proposition}
% Let $\rho \in \mathcal{N}$ and let $s \mapsto t(s)$ be an increasing diffeomorphism of $\mathbb{R}$ such that $|\int_\mathbb{R} t'(s) \log t'(s) ds| < \infty$. Consider the time-changed measure $\tilde \rho_s = t'(s) \rho_{t(s)}$ so that $|\tilde \rho_s| = t'(s)$ and the Loewner chains satisfy $(h^{\tilde \rho}_s = h_{t(s)}^\rho)$. Then
% % \[
% % H(\tilde \rho_s dsd\theta \mid dsd\theta ) = H(\rho_t dtd\theta \mid dtd\theta) + H(dt \mid ds).
% % \]
% \[
% H(\tilde \rho ) = H(\rho) + h(|\tilde{\rho}_{\cdot}|).
% \]
% \end{proposition}
\begin{remark}
    Note that the differential entropy $h$ can be both positive and negative and that $\tilde \rho_s$ is not necessarily a probability measure.
\end{remark}

Proposition \ref{lem:entropy-facts} shows that $\cH$ is not invariant under time-reparametrisation. But it is, in fact, easy to modify the definition of $\cH$ to obtain a notion which \emph{is} independent of the time-parametrisation.

\begin{corollary}
    Let $\rho = \rho_t d\theta dt\in \tilde{\mathcal{N}}$ with mass $m_t = | \rho_t|$, $t\in \mathbb{R}$. Define the  {invariant entropy} as
    $$\cH_{\mathrm{inv}}(\rho) = \cH(\rho) - h_{\mathbb{R}} (m).
    $$
    Then if $\tilde \rho$ is as in \eqref{eq:diffeo}, $\cH_{\mathrm{inv}}(\tilde \rho) = \cH_{\mathrm{inv}}( \rho)$. 
\end{corollary}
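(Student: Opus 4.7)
The plan is to apply part (iii) of Proposition~\ref{lem:entropy-facts} directly and then cancel the common terms. First I would note that if $\tilde\rho_s = t'(s)\rho_{t(s)}$, then its $t$-marginal mass is $\tilde m_s = t'(s) m_{t(s)}$, so both $h_{\mathbb{R}}(\tilde m)$ and $h_{\mathbb{R}}(m)$ need to be well-defined for the invariant entropy to make sense on either side; this is the implicit assumption under which the identity is to be read.

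Next, by Proposition~\ref{lem:entropy-facts}(iii),
\begin{equation*}
\cH(\tilde\rho) = \cH(\rho) + h_{\mathbb{R}}(\tilde m) - h_{\mathbb{R}}(m).
\end{equation*}
Subtracting $h_{\mathbb{R}}(\tilde m)$ from both sides gives
\begin{equation*}
\cH(\tilde\rho) - h_{\mathbb{R}}(\tilde m) = \cH(\rho) - h_{\mathbb{R}}(m),
\end{equation*}
which is precisely $\cH_{\mathrm{inv}}(\tilde\rho) = \cH_{\mathrm{inv}}(\rho)$.

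There is no real obstacle here: the corollary is a bookkeeping consequence of the chain-rule-type identity \eqref{eq:Hreparam}. The only point worth a brief remark is the symmetry of the setup — one should observe that the relation $\tilde\rho_s = t'(s)\rho_{t(s)}$ is symmetric in the two measures (the inverse diffeomorphism $t \mapsto s(t)$ relates $\rho$ and $\tilde\rho$ in the same way), so it does not matter which of $\rho$ and $\tilde\rho$ is regarded as the original and which as the reparametrized measure; this ensures that the invariance holds under \emph{every} increasing diffeomorphism of $\mathbb{R}$ and not merely in one direction.
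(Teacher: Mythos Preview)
Your proof is correct and matches the paper's approach: the corollary is stated without proof in the paper, precisely because it is an immediate consequence of Proposition~\ref{lem:entropy-facts}(iii), exactly as you derive it.
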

While the above invariant entropy may be more natural to work with from a geometric point of view, we do not know of a probabilistic interpretation.

%Let us remark here that 
%\subsection{Lower bounds}
%\label{SS:lower bounds}

\begin{remark}
Theorem~\ref{T:energy_entropy} gives an upper bound on the entropy in terms of the Loewner--Kufarev energy. Pinsker's inequality can be used to give a lower bound:
\begin{align}\label{eq:pinsker}
    H(\nu| \mu) \ge  \frac{1}{2}\left(\int_\Omega|f-1| \, d\mu\right)^2,\text{ where }f =\frac{d\nu}{d\mu}.
\end{align}
See \cite[Theorem 2.12.24]{bogachev2007measure}. This uses that we are considering probability measures.
\end{remark}
\begin{lemma}   \label{lem:optimal-measure}
Suppose $\Phi$ is generated by a Loewner chain with $\rho \in \mathcal{N}_T$ and that $\mathcal{H}(\rho) < \infty$. Then there exists a measure $\rho_\infty \in \mathcal{N}_T$ such that 
\[
\mathcal{H}_T(\rho_\infty) = \mathcal{H}_T^*(\Phi) 
\]
\end{lemma}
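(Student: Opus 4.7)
The plan is to apply the direct method of the calculus of variations, using the three ingredients already at our disposal: (a) $\mathcal{H}$ is a good rate function on $\mathcal{N}_T$ (Theorem~\ref{theorem_LDP}), so its sub-level sets $\{\mathcal{H}\le a\}$ are compact in the weak topology; (b) $\mathcal{H}$ is lower semi-continuous on $\mathcal{N}_T$ (Proposition~\ref{lem:entropy-facts}(ii)); and (c) the solution map $\rho \mapsto (f_t)_{t\in[0,T]}$ of the Loewner--Kufarev equation is continuous from $\mathcal{N}_T$ with the weak topology to the space of Loewner chains with the locally uniform topology (as recalled in the proof of Corollary~\ref{thm:potato}, citing \cite{johansson2012scaling}).

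First I would take a minimizing sequence $\rho_n \in \mathcal{N}_T$ generating $\Phi$ at time $T$ with $\mathcal{H}(\rho_n) \to \mathcal{H}_T^*(\Phi)$. By the hypothesis that some generating $\rho$ has $\mathcal{H}(\rho)<\infty$, the infimum is finite, so we may assume $\mathcal{H}(\rho_n)\le M$ for some $M<\infty$ and all large $n$. Since $\mathcal{H}$ is a good rate function, the sequence $\{\rho_n\}$ lies in a compact subset of $\mathcal{N}_T$, hence admits a subsequence (still denoted $\rho_n$) converging weakly to some $\rho_\infty \in \mathcal{N}_T$.

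Next I would identify $\rho_\infty$ as a generator of $\Phi$. By continuous dependence of the Loewner--Kufarev solution map on the driving measure, the corresponding Loewner chains $f_t^{\rho_n}$ converge locally uniformly on $\Delta$ to $f_t^{\rho_\infty}$ for every $t\in[0,T]$. Since $f_T^{\rho_n} = \Phi$ for every $n$, the limit satisfies $f_T^{\rho_\infty}=\Phi$, so $\rho_\infty$ is indeed a driving measure generating $\Phi$ on $[0,T]$. Then lower semi-continuity gives
\[
\mathcal{H}(\rho_\infty) \;\le\; \liminf_{n\to\infty}\mathcal{H}(\rho_n) \;=\; \mathcal{H}_T^*(\Phi),
\]
while the reverse inequality $\mathcal{H}(\rho_\infty)\ge \mathcal{H}_T^*(\Phi)$ is immediate from the definition of the infimum and the fact that $\rho_\infty$ is a competitor. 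Equality follows, and $\rho_\infty$ is the desired minimizer.

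I do not expect any serious obstacle: each of the three required properties is already established earlier in the paper (or in the cited references). The only point requiring a modest amount of care is checking that $\rho_\infty$ belongs to $\mathcal{N}_T$ rather than merely to $\mathcal{M}(S^1\times[0,T])$, i.e.\ that its $t$-marginal remains Lebesgue measure on $[0,T]$; but this is preserved under weak limits of normalized driving measures, since for any interval $I\subset[0,T]$, $\rho_\infty(S^1\times I)=\lim_n \rho_n(S^1\times I)=|I|$ (using continuity sets of the marginal, or by testing against $\mathbf{1}_{S^1}\otimes\varphi$ for continuous $\varphi$ approximating $\mathbf{1}_I$).
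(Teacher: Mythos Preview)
Your proposal is correct and follows essentially the same direct-method argument as the paper: take a minimizing sequence, extract a weakly convergent subsequence, use continuity of the Loewner map to see the limit still generates $\Phi$, and conclude by lower semicontinuity. The only cosmetic difference is that the paper obtains compactness directly from the compactness of $S^1\times[0,T]$ (hence of $\mathcal{M}_1(S^1\times[0,T])$) rather than from the goodness of the rate function, and it does not spell out the check that $\rho_\infty\in\mathcal{N}_T$; your treatment of these points is fine.
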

\begin{proof}
Let $\rho_n \in \mathcal{N}_T, n = 1,2, \ldots,$ be a sequence such that each $\rho_n$ generates $\Phi$ at time $T$ and $\lim_{n \to \infty} \mathcal{H}_T(\rho_n) = \mathcal{H}_T^*(\Phi)$. Since $S^1 \times [0,T]$ is compact, there is a further subsequence $\rho_{n_k}$  which converges to some measure $\rho_\infty$ as $k \to \infty$. By continuity of the Loewner map $\rho \mapsto \Phi$, this limiting measure generates $\Phi$ as well so $\mathcal{H}_T(\rho_{\infty}) \ge \mathcal{H}_T^*(\Phi)$. On the other hand, by lower semicontinuity,
\[
\mathcal{H}_T^*(\Phi) = \lim_{k\to \infty} \mathcal{H}_T(\rho_{n_k}) \ge \mathcal{H}_T(\rho_{\infty}).
\]
The claim follows.
\end{proof}
% \section{Preliminaries}

% The growth process in the complex plane of a connected compact set $K_t$, indexed by "time" $t\in[0,T]$, can be conveniently described by  
% a family of decreasing domains $D_{t}=\mathbb{\hat C}\smallsetminus K_{t}$ and a collection of conformal maps $f_t:\Delta\to D_{t}$ 
% from a prescribed canonical domain $\Delta$. The usual choice is $\Delta =\mathbb{\hat C}\smallsetminus\mathbb{\overline{D}}$, the complement of the closed unit disk. The capacity function $\text{cap}(K_{t})$, being comparable to the diameter, reflects how fast the set $K_{t}$ is growing. One can reparametrize the time, so that $\text{cap}(K_{t}) =e^{t}$. In this case, the collection of conformal maps $\{f_t\}$, called the \textit{Loewner chain}, satisfies the Loewner--Kufarev equation: For $z\in\Delta$ and $t\in[0,1]$
% \begin{align*}
% \begin{cases*}
%   \dot f_{t}(z) =  zf'_{t}(z)\int_{0}^{2\pi}\frac{z+e^{i\theta}}{z-e^{i\theta}}\mu_t(d\theta)\\
%   f_0(z) = z,
% \end{cases*} 
% \end{align*}
% where $\{\mu_t\}$ is a family of probability measures, called the \textit{driving measures}. 

% Conversely, a family of probability measures, under certain assumptions, can serve as an input for the Loewner--Kufarev equation. The resulting Loewner chain generates a family of decreasing domains. For an exposition of the Loewner theory see, for example,  \cite[Chapter 5]{beliaev2020conformal}.

\section{Proofs of the LDPs}\label{section_proof}
\subsection{Proof of Theorem~\ref{theorem_LDP}}
The proof of Theorem~\ref{theorem_LDP} will be completed at the end of this section.

The space $\mathcal{M}_{+}(S^1 \times [0,T])$ of non-negative finite measures, as well as the space $\mathcal{M}_{1}(S^1 \times [0,T])$ of probability measures, equipped with the weak topology, is metrizable, \cite[Theorem 8.9.4]{bogachev2007measure}.  For our purposes the metric given by the Kantorovich–Rubinstein distance is convenient. It is generated by the Kantorovich–Rubinstein norm:
\begin{align*}
    ||\mu||_{\text{KR}} = \sup\left\{\mu(f): f\in\text{Lip}_{1}(S^1 \times [0,T]),\ \sup_{x\in S^1 \times [0,T]}|f(x)|\le 1\right\},
\end{align*}
where
\begin{align*}
    \text{Lip}_{1}(S^1 \times [0,T]) = \left\{f:S^1 \times [0,T]\to\mathbb{R}, |f(x)-f(y)|\le|x-y|\  \forall x,y\in S^1 \times [0,T]\right\}.
\end{align*}
Note that $||\cdot||_{\text{KR}}\le||\cdot||_{\text{TV}}$, where the latter is the total variation norm. The Kantorovich–Rubinstein distance, given by
\begin{align*}
    d_{\text{KR}}(\mu,\tilde\mu) = ||\mu-\tilde\mu||_{\text{KR}},
\end{align*}
metrizes the weak topology both in $\mathcal{M}_{+}(S^1 \times [0,T])$ and in $\mathcal{M}_{1}(S^1 \times [0,T])$, \cite[Theorem 8.3.2]{bogachev2007measure}. Moreover, we may metrize $\mathcal{M}_{1}(S^{1})$ in precisely the same way, and use the same notation $d_{\text{KR}}$ for the metric.

For a measure $\mu\in\mathcal{M}_{+}(S^1 \times [0,T])$, denote by $\langle\mu\rangle_{m}$ the approximation of $\mu$, obtained by averaging over intervals of length $1/m$, 
\begin{align*}
\langle\mu\rangle_{m} (d\theta, dt)= \sum\limits_{i=1}^{m}\left(m\int_{\frac{i-1}{m}}^{\frac{i}{m}}\mu(d\theta, ds)
    \right)\mathds{1}_{\left[\frac{i-1}{m},\frac{i}{m}\right)}(t)dt.
\end{align*}
\begin{lemma}\label{lemma:approximation}
    For any $\mu\in\mathcal{M}_{+}(S^1 \times [0,T])$, $d_{\text{KR}}(\mu,\langle\mu\rangle_{m})\le \frac{\mu(S^1 \times [0,T])}{m}$. In particular, the approximation $\langle\mu\rangle_{m}$ converges weakly to $\mu$ as $m\to\infty$.
\end{lemma}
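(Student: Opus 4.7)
The plan is to prove the quantitative bound via the dual (Kantorovich) formulation of $d_{\mathrm{KR}}$: it suffices to estimate $|\mu(f) - \langle\mu\rangle_m(f)|$ uniformly over $f \in \mathrm{Lip}_1(S^1 \times [0,T])$ with $\|f\|_\infty \le 1$. The strategy is to exhibit an explicit coupling $\pi$ between $\mu$ and $\langle\mu\rangle_m$ that keeps the $\theta$-coordinate unchanged and moves the $t$-coordinate by at most $1/m$. The weak convergence statement is then immediate, since the bound forces $d_{\mathrm{KR}}(\mu, \langle\mu\rangle_m) \to 0$, and $d_{\mathrm{KR}}$ metrizes the weak topology on $\mathcal{M}_+(S^1\times[0,T])$, as noted just before the lemma.

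The construction works one slab at a time. By the very definition of $\langle\mu\rangle_m$, on the $i$-th slab $S^1 \times [(i-1)/m, i/m)$ the measure $\langle\mu\rangle_m$ is the product of the angular marginal $\bar\mu_i(d\theta) := \int_{(i-1)/m}^{i/m} \mu(d\theta, ds)$ with the uniform density $m\,\mathds{1}_{[(i-1)/m, i/m)}(t)\,dt$; in particular $\mu$ and $\langle\mu\rangle_m$ coincide slab-by-slab in angular marginal and in total mass. I therefore define a Markov kernel $K$ that sends $(\theta_0, t_0)$ in the $i$-th slab to $(\theta_0, U)$, where $U$ is uniform on $[(i-1)/m, i/m)$, and set $\pi(d\theta_1 dt_1, d\theta_2 dt_2) = \mu(d\theta_1 dt_1)\, K((\theta_1, t_1); d\theta_2 dt_2)$. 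Unfolding the definition and applying Fubini yields first marginal $\mu$ and second marginal $\langle\mu\rangle_m$.

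Under $\pi$ one has $\theta_2 = \theta_1$ and $|t_2 - t_1| \le 1/m$ almost surely, so $|(\theta_1, t_1) - (\theta_2, t_2)| \le 1/m$. For any $f \in \mathrm{Lip}_1(S^1\times[0,T])$ with $\|f\|_\infty \le 1$ this gives
\[
|\mu(f) - \langle\mu\rangle_m(f)| \le \int |f(\theta_1,t_1) - f(\theta_2,t_2)|\, d\pi \le \frac{1}{m}\,\pi\bigl((S^1\times[0,T])^2\bigr) = \frac{\mu(S^1\times[0,T])}{m},
\]
and taking the supremum over such $f$ produces the stated inequality. I do not foresee any genuine obstacle; the only mildly delicate step is the marginal check for $\pi$, which is a routine Fubini computation using that $\mu$ restricted to the $i$-th slab projects to $\bar\mu_i$ on $S^1$.
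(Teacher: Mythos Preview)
Your proof is correct and follows essentially the same approach as the paper. The paper carries out the identical estimate by direct integral manipulation: it inserts the identity $\int_{(i-1)/m}^{i/m} m\,d\tilde t = 1$ to write $\mu(f)-\langle\mu\rangle_m(f)$ as a double time-integral of $f(\theta,t)-f(\theta,\tilde t)$ over each slab and then bounds $|f(\theta,t)-f(\theta,\tilde t)|\le 1/m$, which is exactly your coupling $\pi$ written out explicitly rather than named as a Markov kernel.
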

\begin{proof}
Let $f\in\text{Lip}_{1}(S^1 \times [0,T])$ with $\sup_{(\theta,t)\in S^1 \times [0,T]}|f(\theta,t)|\le 1$.
The difference $\mu(f)-\langle\mu\rangle_{m}(f)$ can be estimated as follows:
\begin{align*}
    \left|\mu(f)-\langle\mu\rangle_{m}(f)\right| &= \left|\sum\limits_{i=1}^{m}\left(\int_{0}^{2\pi}\int_{\frac{i-1}{m}}^{\frac{i}{m}}f(\theta,t)\mu(d\theta,dt) - \int_{0}^{2\pi}\int_{\frac{i-1}{m}}^{\frac{i}{m}}f(\theta,\tilde t) m\int_{\frac{i-1}{m}}^{\frac{i}{m}}\mu(d\theta, dt) d\tilde t\right)\right|\\
    &= \left|m\sum\limits_{i=1}^{m}\left(\int_{0}^{2\pi}\int_{\frac{i-1}{m}}^{\frac{i}{m}}f(\theta,t)\mu(d\theta,dt)\int_{\frac{i-1}{m}}^{\frac{i}{m}}d\tilde t - \int_{0}^{2\pi}\int_{\frac{i-1}{m}}^{\frac{i}{m}}f(\theta,\tilde t) \int_{\frac{i-1}{m}}^{\frac{i}{m}}\mu(d\theta, dt) d\tilde t\right)\right|\\
    &\le m\sum\limits_{i=1}^{m}\int_{0}^{2\pi}\int_{\frac{i-1}{m}}^{\frac{i}{m}}\int_{\frac{i-1}{m}}^{\frac{i}{m}}\underbrace{\left|f(\theta, t) - f(\theta, \tilde t)\right|}_{\le|t-\tilde t|\le \frac{1}{m}}\mu(d\theta,dt)d\tilde t \\
    &\le \frac{\mu(S^1 \times [0,T])}{m}.
\end{align*}
Taking the supremum over all such $f$ yields
\[
    d_{\text{KR}}(\mu,\langle\mu\rangle_{m}) \le\frac{\mu(S^1 \times [0,T])}{m}.
\]
Since $d_{\text{KR}}$ metrizes the weak topology the claim that $\langle\mu\rangle_{m}$ weakly converges to $\mu$ follows.
\end{proof}

In the rest of the proof, without loss of generality, we take $T=1$. Also, for the proof to be more trackable 
we choose $\varepsilon =\varepsilon(n)$ so that $n c(\varepsilon)=1$, and denote $\mu_{n} = \mu_{\varepsilon(n)}$, where the driving $\mu_{\varepsilon}$ is defined in (\ref{eq:mu_delta}). The proof of the LDP is essentially the same for anisotropic HL$(0)$ model, so we denote by $\lambda$ the distribution on $S^{1}$ of attachment angles $(\theta_{k})$; in particular, set $\lambda\equiv 1/{2\pi}$ for the classical HL$(0)$ model.

Let us now consider the approximation of the driving measure $\mu_n$:
\begin{align*}
   \mu_{n,m}(d\theta,dt) = \langle \mu_{n}\rangle_{m}= \sum\limits_{i=1}^{m}\left(m \sum\limits_{k=1}^{n}\delta_{\theta_{k}}(d\theta) \scalebox{1}{$\left|\left[\frac{i-1}{m},\frac{i}{m}\right)\cap\left[\frac{k-1}{n},\frac{k}{n}\right)\right|$}\right)\mathds{1}_{\left[\frac{i-1}{m},\frac{i}{m}\right)}(t)dt,
\end{align*}
here $|A|$ denotes the Lebesgue measure of a set $A\subset \mathbb{R}$. This approximation can be decomposed as
\begin{align*}
   \mu_{n,m}(d\theta,dt) = \sum\limits_{i=1}^{m}\scalebox{1}{$\frac{m}{n}\left(\left\lfloor\frac{i}{m}n\right\rfloor-\left\lceil\frac{i-1}{m}n\right\rceil+1\right)$}L_{i}^{n,m}(d\theta)\mathds{1}_{\left[\frac{i-1}{m},\frac{i}{m}\right)}(t)dt + r_{n,m}(d\theta,dt),
\end{align*}
where
\begin{align*}
    L_{i}^{n,m}(d\theta) =\frac{1}{\scalebox{1}{$\left(\left\lfloor\frac{i}{m}n\right\rfloor-\left\lceil\frac{i-1}{m}n\right\rceil+1\right)$}}\sum\limits_{k=\left\lceil\frac{i-1}{m}n\right\rceil}^{\left\lfloor\frac{i}{m}n\right\rfloor}\delta_{\theta_{k}}(d\theta),
\end{align*}
and
\begin{align*}
    r_{n,m}(d\theta, dt) =  m\sum\limits_{i=1}^{m}\left(\delta_{\theta_{\left\lceil\frac{i-1}{m}n\right\rceil}}(d\theta){\scalebox{1}{$\left(\left\lceil\frac{i-1}{m}n\right\rceil\frac{1}{n} - \frac{i-1}{m}\right)$}}+\delta_{\theta_{\left\lceil\frac{i}{m}n\right\rceil}}(d\theta)\scalebox{1}{$\left(\frac{i}{m} -\left\lfloor\frac{i}{m}n\right\rfloor\frac{1}{n} \right)$}\right)\mathds{1}_{\left[\frac{i-1}{m},\frac{i}{m}\right)}(t)dt.
\end{align*}
The approximating measure $\mu_{n,m}$ is broken down into two parts. The first consists of $(L_{i}^{n,m})_{i=1}^{m}$, which are mutually independent empirical measures. The $L_{i}^{n,m}$ is made up of those $\theta_{k}$ that correspond to the $1/n$-intervals fully contained in $[\frac{i-1}{m},\frac{i}{m})$. The second part $r_{n,m}$ is the remainder, which is built from those $\theta_k$ that correspond to the $1/n$-intervals intersecting two neighboring $1/m$-intervals. See Figure \ref{figure:intervals} for an illustration.

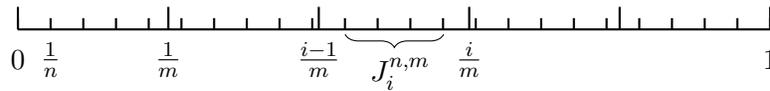
\begin{figure}[H]
\centering
\begin{tikzpicture}
    % Length of the interval
    \def\len{10}
    \draw[thick] (0,0) -- (\len,0); % The length of the interval from 0 to 1 is represented as \len units
    % Division into n parts (smaller divisions)
    \def\n{23} % Set n parts here
    \def\m{5}  % Set m parts here
    % Draw smaller divisions (n parts)
    \foreach \i in {0,1,...,\n}
    {
        \draw[thick] (\len*\i/\n, 0.15) -- (\len*\i/\n, 0); % Smaller tick marks for n parts
    }
    % Draw larger divisions (m parts)
    \foreach \i in {0,1,...,\m}
    {
        \draw[thick] (\len*\i/\m, 0.3) -- (\len*\i/\m, 0); % Larger tick marks for m parts
    }
    \draw [decorate,decoration={brace,amplitude=5pt,mirror,raise=1ex}]
    (4.35,0.1) -- (5.66,0.1) node[midway,yshift=-1.8em, xshift=0.2em]{$J_{i}^{n,m}$};
    % Labels 
    \node at (0,-0.4) {$0$};
    \node at (\len/\n,-0.4) {$\frac{1}{n}$};
    \node at (\len/\m,-0.4) {$\frac{1}{m}$};
    \node at (2*\len/\m,-0.4) {$\frac{i-1}{m}$};
    \node at (3*\len/\m,-0.4) {$\frac{i}{m}$};
    \node at (\len,-0.4) {$1$};
\end{tikzpicture}
\captionsetup{width=.7\linewidth}
\caption{The interval $J_{i}^{n,m}= \left[ \left\lceil\frac{i-1}{m}n\right\rceil\frac{1}{n}, \left\lfloor\frac{i}{m}n\right\rfloor\frac{1}{n}\right)$ consists of those intervals of length $1/n$ that are fully contained in $[\frac{i-1}{m},\frac{i}{m})$.}
\label{figure:intervals}
\end{figure}

The constant factors \scalebox{0.9}{$\frac{m}{n}\left(\left\lfloor\frac{i}{m}n\right\rfloor-\left\lceil\frac{i-1}{m}n\right\rceil+1\right)$} and the remainder measure $r_{n,m}$ are irrelevant to the the large deviations of $\mu_{n,m}$, and the following measure can be considered instead:
\begin{align*}
    \tilde\mu_{n,m} =  \sum\limits_{i=1}^{m}L_{i}^{n,m}(d\theta)\mathds{1}_{\left[\frac{i-1}{m},\frac{i}{m}\right)}(t)dt
\end{align*}
This is due to the fact that the two measures, $\mu_{n,m}$ and $\tilde\mu_{n,m}$, are \textit{exponentially equivalent} as the next lemma demonstrates.
\begin{lemma}\label{lemma:exponential_equivalence}
For any $\delta>0$,  $\uplim\limits_{n\to\infty}\frac{1}{n}\log\mathbb{P}\left[d_{\text{KR}}(\mu_{n,m}, \tilde\mu_{n,m})>\delta\right] = -\infty.$
\end{lemma}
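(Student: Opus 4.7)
The plan is to show that the bound is in fact \emph{deterministic}: the Kantorovich--Rubinstein distance $d_{\mathrm{KR}}(\mu_{n,m},\tilde\mu_{n,m})$ can be bounded by a quantity of order $m/n$ that does not depend on the random angles $(\theta_k)$. Once this is established, for any fixed $\delta>0$ and all $n$ larger than some threshold depending only on $m$ and $\delta$, the event $\{d_{\mathrm{KR}}(\mu_{n,m},\tilde\mu_{n,m})>\delta\}$ is empty, so its probability is $0$ and hence $\tfrac1n\log\mathbb{P}[\cdot]=-\infty$.

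To implement this, I would start from the triangle inequality and the fact that $\|\cdot\|_{\mathrm{KR}}\le\|\cdot\|_{\mathrm{TV}}$. Writing $c_i = \tfrac{m}{n}\bigl(\lfloor in/m\rfloor-\lceil(i-1)n/m\rceil+1\bigr)$, the decomposition given in the excerpt yields
\[
\mu_{n,m}-\tilde\mu_{n,m} = \sum_{i=1}^m (c_i-1)\,L_i^{n,m}(d\theta)\,\mathbf{1}_{[(i-1)/m,i/m)}(t)\,dt + r_{n,m}.
\]
The first piece is controlled by observing that the number of unit-fraction intervals of length $1/n$ fully contained in $[(i-1)/m,i/m)$ is $n/m+O(1)$, so $|c_i-1|\le C_1 m/n$ for an absolute constant $C_1$. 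Since each $L_i^{n,m}$ is a probability measure on $S^1$ and the indicator in $t$ has mass $1/m$, the total variation of the first piece is at most $\sum_i |c_i-1|/m \le C_1 m/n$.

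For the remainder $r_{n,m}$, I would bound its total mass directly: the fractional offsets $\lceil(i-1)n/m\rceil/n-(i-1)/m$ and $i/m-\lfloor in/m\rfloor/n$ are each at most $1/n$, so the coefficient in front of each Dirac mass is at most $m/n$; integrating the $t$-indicator contributes a factor $1/m$, and summing the $2m$ contributions gives $\|r_{n,m}\|_{\mathrm{TV}}\le 2m/n$. Combining, $d_{\mathrm{KR}}(\mu_{n,m},\tilde\mu_{n,m})\le C\, m/n$ deterministically for some absolute constant $C$, and the conclusion follows.

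There is no real obstacle here: the two ingredients (the prefactors $c_i$ and the remainder $r_{n,m}$) are designed to be small by construction, and the only care needed is bookkeeping of the $O(1/n)$ offsets at the boundaries between the $m$ intervals. The reason the statement is phrased as exponential equivalence rather than deterministic closeness is purely to fit in the standard LDP framework (cf.\ \cite[Theorem~4.2.13]{dembo2009large}), which will be invoked later to transfer an LDP for $\tilde\mu_{n,m}$ to one for $\mu_{n,m}$.
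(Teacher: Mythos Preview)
Your approach is correct and essentially identical to the paper's: both bound $\|\mu_{n,m}-\tilde\mu_{n,m}\|_{\mathrm{TV}}$ deterministically by a quantity of order $m/n$ (the paper writes it as $\tfrac{1}{m}\sum_i|c_i-1|+m/n$), then use $\|\cdot\|_{\mathrm{KR}}\le\|\cdot\|_{\mathrm{TV}}$ to conclude that the event in question is empty for $n$ large enough. The only difference is cosmetic---the paper's remainder bound is $m/n$ rather than your $2m/n$, but this has no effect on the conclusion.
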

\begin{proof}
\begin{align*}
    ||\mu_{n,m} - \tilde\mu_{n,m}||_{TV}\le \frac{1}{m}\sum\limits_{i=1}^{m}\left|\scalebox{1}{$\frac{m}{n}\left(\left\lfloor\frac{i}{m}n\right\rfloor-\left\lceil\frac{i-1}{m}n\right\rceil+1\right)$} - 1\right| + \frac{m}{n}\to 0 \text{ as }n\to\infty.
\end{align*}   
From the inequality $||\cdot||_{KR}\le||\cdot||_{TV}$, we have $\mathbb{P}[d_{\text{KR}}(\mu_{n,m}, \tilde\mu_{n,m})>\delta] = 0$ for $n>N(m,\delta)$, which implies the claim.
\end{proof}
\noindent Exponentially equivalent random elements satisfy the same LDP, see \cite[Theorem 4.2.13]{dembo2009large}. Thus, we can concentrate our attention on $\tilde \mu_{n,m}$, which has the advantage that the summands $L_{i}^{n,m}$ are mutually independent. 

%\noindent\textbf{Step 1}\\
$\mathbf{1^{\circ}}.$ The large deviations of the empirical measure $L_{i}^{n,m}$ is the content of Sanov's theorem, see \cite[Theorem 3.2.17]{deuschel2001large} or \cite[Theorem 6.2.10]{dembo2009large}.
\begin{theorem}[Sanov's theorem]
The empirical measures $L_{i}^{n,m}$, indexed by $n$, satisfy a large deviation principle in $\mathcal{M}_{1}(S^{1})$, equipped with the weak topology, with rate $n/m$ and convex, good rate function $H(\cdot|\lambda)$, given by the relative entropy.
\end{theorem}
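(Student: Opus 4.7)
The plan is to recognize this as a standard instance of Sanov's theorem for empirical measures of i.i.d.\ samples, and then sketch the classical proof. Indeed $L_i^{n,m}$ is the normalized empirical measure of $N = \lfloor in/m\rfloor - \lceil (i-1)n/m\rceil + 1$ i.i.d.\ samples $\theta_k \sim \lambda$, with $N/(n/m) \to 1$, so it suffices to prove the LDP with rate $N$ for $L_N = \tfrac{1}{N}\sum_{k=1}^N \delta_{\theta_k}$ in $\mathcal{M}_1(S^1)$. I would establish this via the classical Chernoff/tilting argument.

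For the upper bound, fix a closed set $C \subset \mathcal{M}_1(S^1)$ and any bounded continuous $f:S^1 \to \mathbb{R}$. Since $\mathbb{E}[e^{N L_N(f)}] = (\int_{S^1} e^f d\lambda)^N$, the exponential Markov inequality gives $\mathbb{P}(L_N(f) \ge a) \le \exp\bigl(-N[a - \log\int e^f d\lambda]\bigr)$ on each half-space $\{\nu : \nu(f) \ge a\}$. Because $S^1$ is compact, so is $\mathcal{M}_1(S^1)$, and a standard covering/finite intersection argument combines these local estimates into
\[
\limsup_{N \to \infty} \frac{1}{N}\log \mathbb{P}(L_N \in C) \le -\inf_{\nu \in C}\sup_{f \in C_b(S^1)}\Bigl[\nu(f) - \log \int_{S^1} e^f d\lambda\Bigr],
\]
and the inner supremum equals $H(\nu | \lambda)$ by the Donsker--Varadhan variational formula for relative entropy.

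For the lower bound, I would pick $\nu \in O$ with $H(\nu|\lambda) < \infty$, so $\nu \ll \lambda$ with density $g = d\nu/d\lambda$, and tilt to the product law $\mathbb{Q}$ under which the $\theta_k$ are i.i.d.\ with law $\nu$. Under $\mathbb{Q}$ the empirical measure converges weakly to $\nu$ by the law of large numbers, so $\mathbb{Q}(L_N \in O) \to 1$, and the change-of-measure identity
\[
\mathbb{P}(L_N \in O) = \mathbb{E}^{\mathbb{Q}}\bigl[\mathbf{1}_{\{L_N \in O\}}\, e^{-N L_N(\log g)}\bigr]
\]
together with $L_N(\log g) \to \nu(\log g) = H(\nu|\lambda)$ yields $\liminf \tfrac{1}{N}\log \mathbb{P}(L_N \in O) \ge -H(\nu|\lambda)$. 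Convexity, lower semicontinuity, and goodness of $H(\cdot|\lambda)$ then follow from the Donsker--Varadhan variational formula and compactness of $\mathcal{M}_1(S^1)$.

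The principal technical obstacle is the lower bound when $g$ is unbounded or not bounded away from zero: $L_N(\log g)$ is then a sum of random variables with possibly heavy tails, and one must truncate $g$ at levels $\delta \le g \le \delta^{-1}$, carry out the tilting argument with the truncated density, and pass to the limit $\delta \to 0$ using monotone convergence and lower semicontinuity of $H(\cdot|\lambda)$. Since the statement is exactly \cite[Theorem 6.2.10]{dembo2009large}, in practice I would simply invoke this reference rather than reproduce the full argument.
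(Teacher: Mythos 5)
Your proposal is correct and takes essentially the same approach as the paper: the paper likewise treats this as a direct application of Sanov's theorem and simply cites \cite[Theorem 3.2.17]{deuschel2001large} and \cite[Theorem 6.2.10]{dembo2009large} without reproducing the argument. Your Chernoff/tilting sketch (with the observation that the sample size $N = \lfloor in/m\rfloor - \lceil (i-1)n/m\rceil + 1$ satisfies $N/(n/m) \to 1$, so the rate can be taken as $n/m$) is the standard proof lying behind those references, and your caveat about truncating $g$ in the lower bound is the right technical point to flag.
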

\noindent In other words, for any open set $O$ and any closed set $C$ of $\mathcal{M}_{1}(S^{1})$, 
\[
\text{(i)\quad } \limsup\limits_{n/m\to\infty}\frac{1}{n/m}\log\mathbb{P}\left[L_{i}^{n,m}(d\theta) \in C\right]\le-\inf\limits_{\nu\in C}H(\nu|\lambda)
\]
and 
\[
\text{(ii)\quad } \limsup\limits_{n/m\to\infty}\frac{1}{n/m}\log\mathbb{P}\left[L_{i}^{n,m}(d\theta) \in O\right] \ge  -\inf\limits_{\nu\in O}H(\nu|\lambda).
\]
\noindent Note that if the rate of convergence is considered to be $n$, then the rate function is simply scaled by $1/m$.

%\noindent\textbf{Step 2}\\
$\mathbf{2^{\circ}}.$ Every sequence, indexed by $n$, in the collection $L_1^{n,m}$, ..., $L_{m}^{n,m}$ satisfies the LDP in $\mathcal{M}_{1}(S^{1})$ with rate $n$ and common rate function $\frac{1}{m}H(\cdot|\lambda)$. The sequence $(L_{1}^{n,m}$, ..., $L_{m}^{n,m})$  as an element of $\mathcal{M}_{1}(S^{1})^{m}$ is made out of mutually independent empirical measures and thus satisfies the LDP with the rate function $I_m:\mathcal{M}_{1}(S^{1})^{m}\to[0,\infty]$ given by
\begin{align*}
    I_m(\nu_1,...,\nu_m) = \frac{1}{m}\sum\limits_{i=1}^{m}H(\nu_i|\lambda).
\end{align*} This general fact relies only on independence, see for example \cite[Corollary 1.3]{puhalskii2006large}.\\

%\noindent\textbf{Step 3}\\
$\mathbf{3^{\circ}}.$ The passage from the vector $(L_1^{n,m}$, ..., $L_{m}^{n,m})$ in $\mathcal{M}_{1}(S^{1})^m$ to the measure $\tilde \mu_{n,m}$ in $\mathcal{M}_{1}(S^1 \times [0,1])$ via the mapping  
$$(\nu_{1},...,\nu_{m}) \to \sum\limits_{i=1}^m\nu_{i} (d\theta)\mathds{1}_{\left[\frac{i-1}{m}, \frac{i}{m}\right)}(t)dt$$ is a direct application of the contraction principle, \cite[Theorem 4.2.1]{dembo2009large}. The continuity of the mapping is proved in the following lemma. 
\begin{lemma}\label{lemma:continuous_mapping_L}
    The mapping from $\mathcal{M}_{1}(S^{1})^{m}$ to $\mathcal{M}_{1}(S^1 \times [0,1])$ given by $$(\nu_1, ...,\nu_m)\to\sum\limits_{i=1}^{m}\nu_i(d\theta)\mathds{1}_{[\frac{i-1}{m},\frac{i}{m})}(t)dt$$ is continuous in the weak topology.
\end{lemma}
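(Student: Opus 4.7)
The plan is to prove the stronger statement that the map is actually Lipschitz with respect to the Kantorovich--Rubinstein distance, which metrizes the weak topology on both $\mathcal{M}_1(S^1)$ and $\mathcal{M}_1(S^1 \times [0,1])$ by the same reference used earlier in the proof. This avoids having to verify weak convergence through portmanteau-type arguments and is a clean quantitative statement.

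First I would observe that the output is indeed a probability measure on $S^1 \times [0,1]$: its total mass equals $\sum_{i=1}^m \nu_i(S^1)\cdot \tfrac{1}{m} = 1$. Next, fix any $f \in \mathrm{Lip}_1(S^1 \times [0,1])$ with $\sup|f|\le 1$. The key point is that for each fixed $t\in[0,1]$ the slice $\theta \mapsto f(\theta,t)$ is a 1-Lipschitz function on $S^1$ bounded by $1$, so it is an admissible test function for the KR norm on $\mathcal{M}_1(S^1)$. Hence
\[
\left|\int_{S^1} f(\theta,t)\bigl(\nu_i^{(n)} - \nu_i\bigr)(d\theta)\right| \;\le\; d_{\mathrm{KR}}\bigl(\nu_i^{(n)},\nu_i\bigr)
\]
for every $i$ and every $t$.

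Writing $\mu^{(n)} = \sum_i \nu_i^{(n)}(d\theta)\mathbf{1}_{[\frac{i-1}{m},\frac{i}{m})}(t)dt$ and $\mu = \sum_i \nu_i(d\theta)\mathbf{1}_{[\frac{i-1}{m},\frac{i}{m})}(t)dt$, integrating in $t$ over each subinterval and summing gives
\[
\bigl|\mu^{(n)}(f) - \mu(f)\bigr| \;\le\; \sum_{i=1}^m \int_{\frac{i-1}{m}}^{\frac{i}{m}} d_{\mathrm{KR}}\bigl(\nu_i^{(n)},\nu_i\bigr)\,dt \;=\; \frac{1}{m}\sum_{i=1}^m d_{\mathrm{KR}}\bigl(\nu_i^{(n)},\nu_i\bigr).
\]
Taking the supremum over admissible $f$ yields the Lipschitz bound
\[
d_{\mathrm{KR}}\bigl(\mu^{(n)},\mu\bigr) \;\le\; \frac{1}{m}\sum_{i=1}^m d_{\mathrm{KR}}\bigl(\nu_i^{(n)},\nu_i\bigr),
\]
which tends to zero whenever $\nu_i^{(n)} \to \nu_i$ weakly for each $i$. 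This proves continuity.

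There is no serious obstacle: the only small point to verify is that restricting a 1-Lipschitz, sup-bounded function on the product space to a slice preserves both properties, which is immediate since the product is equipped with the sum (or any equivalent) metric, so horizontal distances dominate vertical ones.
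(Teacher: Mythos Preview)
Your proof is correct and essentially the same as the paper's: both establish the Lipschitz bound $d_{\mathrm{KR}}(\mu,\tilde\mu)\le \tfrac{1}{m}\sum_i d_{\mathrm{KR}}(\nu_i,\tilde\nu_i)$. The only cosmetic difference is that the paper first averages the test function in $t$ over each subinterval to obtain a Lipschitz function of $\theta$ alone, whereas you slice at each fixed $t$ and then integrate; both routes are equally short and yield the same inequality.
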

\begin{proof} 
The distance between the two measures $\mu(d\theta,dt) =\sum\limits_{i=1}^{m}\nu_i(d\theta)\mathds{1}_{[\frac{i-1}{m}, \frac{i}{n})}(t)dt$ and $\tilde\mu(d\theta,dt) = \sum\limits_{i=1}^{m}\tilde\nu_i(d\theta)\mathds{1}_{[\frac{i-1}{m}\frac{i}{n})}(t)dt$ is
    \begin{align*}
        &d_{\text{KR}}\left(\mu,\tilde\mu\right)=\sup\limits_{f}\left|\sum\limits_{i=1}^{m}\left(\int_{0}^{1}\int_{\frac{i-1}{m}}^{\frac{i}{m}}f(\theta,t)dt\nu_{i}(d\theta) - \int_{0}^{1}\int_{\frac{i-1}{m}}^{\frac{i}{m}}f(\theta,t)dt\tilde\nu_{i}(d\theta) \right)\right|,
    \end{align*}
    where the supremum is taken over Lip$_{1}(S^1 \times [0,1])$ functions with $\sup_{(\theta, t)\in S^1 \times [0,1]}|f(\theta, t)|\le 1$. The function $\theta \mapsto m\int_{\frac{i-1}{m}}^{\frac{i}{m}}f(\theta,t)dt$ is again Lipschitz continuous, and moreover the norm does not increase. Thus the distance can be bounded by 
    \begin{align*}
d_{\text{KR}}\left(\mu,\tilde\mu\right)
&\le \sum\limits_{i=1}^{m}  \sup\limits_{f}\left|\nu_{i}\left(m\int_{\frac{i-1}{m}}^{\frac{i}{m}}f(\cdot, t)dt\right) - \tilde\nu_{i}\left(m\int_{\frac{i-1}{m}}^{\frac{i}{m}}f(\cdot, t)dt\right)\right|\\
        &\le\frac{1}{m}\sum\limits_{i=1}^{m}\sup\limits_{g}\left|\nu_{i}(g)-\tilde\nu_{i}(g)\right|\\
        &=\frac{1}{m}\sum\limits_{i=1}^{m} d_{\text{KR}}(\nu_i,\tilde\nu_i).\qedhere
    \end{align*}
\end{proof}
\begin{lemma}\label{lemma_LDP_for_approximation}
    The measures $\tilde \mu_{n,m} =\sum\limits_{i=1}^mL_{i}^{n,m}(d\theta)\mathds{1}_{\left[\frac{i-1}{m}, \frac{i}{m}\right)}(t)dt$, indexed by $n$, satisfy a large deviation principle in $\mathcal{M}_{1}(S^1 \times [0,1])$ with the rate $n$ and the rate function 
\begin{align*}
    I_m(\mu) = 
    \begin{cases}
        \frac{1}{m}\sum\limits_{i=1}^{m}H(\nu_i|\lambda), \text{ if }\mu(d\theta,dt) = \sum\limits_{i=1}^{m}\nu_i(d\theta)\mathds{1}_{[\frac{i-1}{m},\frac{i}{m})}(t)dt \text{ with } \nu_i\in\mathcal{M}_{1}(S^{1}),\\
        +\infty, \text{ otherwise}.
    \end{cases}
\end{align*}
\end{lemma}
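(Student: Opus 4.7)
The plan is to apply the contraction principle (\cite[Theorem 4.2.1]{dembo2009large}) to the LDP for the product vector $(L_1^{n,m},\ldots,L_m^{n,m})$ in $\mathcal{M}_1(S^1)^m$ established in step $\mathbf{2^\circ}$, using the map
\[
F:(\nu_1,\ldots,\nu_m)\mapsto \sum_{i=1}^{m}\nu_i(d\theta)\mathds{1}_{\left[\frac{i-1}{m},\frac{i}{m}\right)}(t)\,dt
\]
provided by Lemma \ref{lemma:continuous_mapping_L}. Since by construction $\tilde{\mu}_{n,m}=F(L_1^{n,m},\ldots,L_m^{n,m})$ and $F$ is continuous between the relevant metrizable weak-topology spaces, the contraction principle immediately yields an LDP for $\tilde{\mu}_{n,m}$ in $\mathcal{M}_1(S^1\times[0,1])$ with rate $n$ and rate function
\[
I_m(\mu)=\inf\!\left\{\tfrac{1}{m}\sum_{i=1}^{m}H(\nu_i\mid\lambda):(\nu_1,\ldots,\nu_m)\in\mathcal{M}_1(S^1)^m,\ F(\nu_1,\ldots,\nu_m)=\mu\right\}.
\]

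Next I would simplify the infimum by observing that $F$ is injective. Indeed, if $\mu=F(\nu_1,\ldots,\nu_m)$ then restricting $\mu$ to the slab $S^1\times[\frac{i-1}{m},\frac{i}{m})$ yields the product measure $\nu_i\otimes dt|_{[\frac{i-1}{m},\frac{i}{m})}$, so each $\nu_i$ is uniquely recovered by $\nu_i(A)=m\,\mu\bigl(A\times[\frac{i-1}{m},\frac{i}{m})\bigr)$ for Borel $A\subseteq S^1$. Consequently, for $\mu$ in the image of $F$ the preimage is a singleton and the infimum collapses to the single value $\frac{1}{m}\sum_{i}H(\nu_i\mid\lambda)$, while for $\mu$ outside the image of $F$ the preimage is empty and $\inf\emptyset=+\infty$. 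The image of $F$ is precisely the set of measures of the piecewise form $\sum_{i}\nu_i(d\theta)\mathds{1}_{[\frac{i-1}{m},\frac{i}{m})}(t)\,dt$ with each $\nu_i$ a probability measure on $S^1$, matching the case distinction in the statement.

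Finally, the goodness and convexity of the rate function transfer through $F$: the product rate function on $\mathcal{M}_1(S^1)^m$ is good and convex (Sanov's theorem gives goodness and convexity of $H(\cdot\mid\lambda)$ on each coordinate, and a finite sum of good convex functions on a product of compact-sublevel-set spaces is again good and convex), and continuous images of compact sets are compact, so the pushforward $I_m$ on $\mathcal{M}_1(S^1\times[0,1])$ inherits both properties through the injective continuous map $F$. The main thing requiring care is the application of the contraction principle itself, but with continuity of $F$ already provided by Lemma \ref{lemma:continuous_mapping_L} and independence of the $L_i^{n,m}$ giving the product LDP in $\mathbf{2^\circ}$, there is no substantial obstacle—only assembly of the pieces.
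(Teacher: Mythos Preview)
Your proposal is correct and follows essentially the same approach as the paper: apply the contraction principle via the continuous map of Lemma~\ref{lemma:continuous_mapping_L} to the product LDP from step~$\mathbf{2^\circ}$, then observe that the map is injective so the infimum collapses to a single term on the image and equals $+\infty$ off it. Your additional remarks on goodness and convexity go slightly beyond what the paper records in this lemma, but are straightforward and correct.
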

\begin{proof}
Let $\mathcal{L}:(\nu_{1},...,\nu_{m})\to\sum\limits_{i=1}^m\nu_{i}(d\theta)\mathds{1}_{\left[\frac{i-1}{m}, \frac{i}{m}\right)}(t)dt$ denote the continuous mapping of Lemma~\ref{lemma:continuous_mapping_L}.
By the contraction principle, see \cite[Theorem 4.2.1]{dembo2009large}, the rate function is given by 
\begin{align*}
    I_m(\mu) = \inf\{I_m(\nu_{1},...,\nu_{m}): (\nu_{1},...,\nu_{m}) \in \mathcal{L}^{-1}(\mu) \},\quad \mu\in\mathcal{M}_{1}(S^1 \times [0,1]).
\end{align*}
If $\mu\in\mathcal{M}_{1}(S^1 \times [0,1])$ has the form 
\[
\mu(d\theta,dt) = \sum\limits_{i=1}^m\nu_{i}(d\theta)\mathds{1}_{\left[\frac{i-1}{m}, \frac{i}{m}\right)}(t)dt,\quad \nu_i\in\mathcal{M}_{1}(S^{1}),
\]
then $\mathcal{L}^{-1}(\mu) = \{(\nu_{1},...,\nu_{m}) \}$, and 
\[
I_{m}(\mu) = \frac{1}{m}\sum\limits_{i=1}^{m}H(\nu_i|\lambda).
\]
Otherwise, $\mathcal{L}^{-1}(\mu) = \varnothing$, and the convention is that the infinum over an empty set is $+\infty$. 
\end{proof}
%\noindent\textbf{Step 4}\\
$\mathbf{4^{\circ}}.$ Lemma \ref{lemma:approximation} implies that $\mu_{n,m}$ is within distance $1/m$ to $\mu_{n}$ uniformly in $n$, since $\mu_{n}(S^1 \times [0,1]) = 1$. This, in turn, implies exponential closeness, that is, for any $\delta>0$
\begin{align*}
     \lim\limits_{m\to\infty}\uplim\limits_{n\to\infty}\frac{1}{n}\log\mathbb{P}\left[d_{\text{KR}}(\mu_{n},\mu_{n,m})>\delta\right] = -\infty,
\end{align*}
since $\mathbb{P}\left[d_{\text{KR}}(\mu_{n},\mu_{n,m})>\delta\right] =0$ for any $m>1/\delta$.

The combination of the exponential closeness of $\mu_{n}$ and $\mu_{n,m}$, the exponential equivalence of $\mu_{n,m}$ and $\tilde\mu_{n,m}$, Lemma \ref{lemma:exponential_equivalence}, and the LDP of $\tilde\mu_{n,m}$, Lemma \ref{lemma_LDP_for_approximation}, together with the standard result on exponential approximations \cite[Theorem 4.2.16]{dembo2009large} implies that the driving measures $\mu_{n}$ satisfy the weak LDP with rate $n$ and the rate function given by 
\begin{align*}
    I(\mu) = \sup\limits_{\delta>0}\lowlim\limits_{m\to\infty}I_{m}(B(\mu,\delta)).
\end{align*}
The next lemma shows that $I$ equals the relative entropy and as a consequence is a good rate function. First, recall the definition of the relative entropy: for any $\mu,\tilde \mu\in\mathcal{M}_{1}(S^1 \times [0,1])$
\begin{align*}
    H(\mu|\tilde\mu)  
    = 
    \begin{cases}
        \int_{S^1 \times [0,1]}\frac{d\mu}{d\tilde\mu}\log\frac{d\mu}{d\tilde\mu} d\tilde\mu, &\text{ if }\mu \ll\tilde\mu\\
        +\infty, &\text{ otherwise}.
    \end{cases}
\end{align*}
\begin{lemma}
  For any $\mu\in\mathcal{M}_{1}(S^1 \times [0,1])$, $I(\mu) = H(\mu(d\theta, dt)|\lambda(d\theta) dt)$, where $H$ is the relative entropy. Moreover, $I$ is a good rate function. 
\end{lemma}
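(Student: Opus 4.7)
The plan is to prove the two inequalities $I(\mu) \le H(\mu \,|\, \lambda(d\theta)dt)$ and $I(\mu) \ge H(\mu \,|\, \lambda(d\theta)dt)$ separately, where the upper bound is obtained by an explicit piecewise-constant approximation together with Jensen's inequality and the lower bound by the weak lower semi-continuity of relative entropy. The goodness of $I$ will then follow from compactness of $\mathcal{M}_1(S^1 \times [0,1])$ in the weak topology.

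For the \textbf{upper bound}, fix $\mu$ with $H(\mu \,|\, \lambda(d\theta)dt) < \infty$ (otherwise there is nothing to prove), so $\mu = f(\theta,t)\lambda(d\theta)dt$ with $f\log f$ integrable. Consider the piecewise-constant approximation $\langle \mu \rangle_m$. Since $\mu \in \mathcal{N}_1$, each ``slice'' $\tilde\nu_i^{(m)} := m\int_{(i-1)/m}^{i/m}\mu(d\theta,ds)$ is a probability measure on $S^1$ with density $F_i(\theta) = m\int_{(i-1)/m}^{i/m} f(\theta,s)\,ds$ with respect to $\lambda$. Applying Jensen's inequality to the convex function $x \log x$ on each time-slice gives
\[
F_i(\theta)\log F_i(\theta) \le m\int_{(i-1)/m}^{i/m} f(\theta,s)\log f(\theta,s)\,ds.
\]
Summing over $i$ and integrating against $\lambda$, we obtain $I_m(\langle\mu\rangle_m) = \tfrac{1}{m}\sum_i H(\tilde\nu_i^{(m)}\,|\,\lambda) \le H(\mu\,|\,\lambda(d\theta)dt)$. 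By Lemma \ref{lemma:approximation} we have $\langle\mu\rangle_m \in B(\mu,\delta)$ for $m > 1/\delta$, so $I_m(B(\mu,\delta)) \le H(\mu\,|\,\lambda(d\theta)dt)$; letting $m \to \infty$ and taking the supremum over $\delta$ yields $I(\mu) \le H(\mu\,|\,\lambda(d\theta)dt)$.

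For the \textbf{lower bound}, the key observation is that on its effective domain $I_m$ agrees with relative entropy: if $\tilde\mu = \sum_i \nu_i(d\theta)\mathbf{1}_{[(i-1)/m,i/m)}(t)\,dt$ with each $\nu_i$ a probability measure absolutely continuous with respect to $\lambda$ (otherwise both sides are $+\infty$), then $\tilde\mu$ has density $\sum_i g_i(\theta)\mathbf{1}_i(t)$ with respect to $\lambda(d\theta)dt$ where $g_i = d\nu_i/d\lambda$, and since the indicators are disjoint,
\[
H(\tilde\mu\,|\,\lambda(d\theta)dt) = \frac{1}{m}\sum_{i=1}^{m}\int g_i\log g_i\,d\lambda = I_m(\tilde\mu).
\]
Therefore $I_m(B(\mu,\delta)) \ge \inf_{\tilde\mu \in B(\mu,\delta)} H(\tilde\mu\,|\,\lambda(d\theta)dt)$, where the infimum on the right is over all elements of $B(\mu,\delta)$. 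Now $H(\cdot\,|\,\lambda(d\theta)dt)$ is lower semi-continuous in the weak topology (a classical fact, e.g.\ via its variational representation as a supremum over bounded continuous functions), so for any $a < H(\mu\,|\,\lambda(d\theta)dt)$ there is $\delta_0 > 0$ with $H(\tilde\mu\,|\,\lambda(d\theta)dt) > a$ on $B(\mu,\delta_0)$. Taking $\liminf_{m\to\infty}$ and then $\sup_\delta$ gives $I(\mu) \ge a$, and since $a$ is arbitrary, $I(\mu) \ge H(\mu\,|\,\lambda(d\theta)dt)$.

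Finally, for \textbf{goodness}: the space $\mathcal{M}_1(S^1 \times [0,1])$ is compact in the weak topology since $S^1 \times [0,1]$ is compact (Prokhorov). The level set $\{I \le a\} = \{H(\cdot\,|\,\lambda(d\theta)dt) \le a\}$ is closed by the lower semi-continuity just invoked, hence compact as a closed subset of a compact space. The main (minor) obstacle in this argument is the bookkeeping needed to confirm that the two approximations---$\langle\mu\rangle_m$ on the upper side and the weak-topology argument on the lower side---interact correctly with the condition that each time-slice $\tilde\nu_i^{(m)}$ is a probability measure, which is exactly where we use $\mu \in \mathcal{N}_1$ (i.e.\ $\mu(S^1 \times I) = |I|$) to ensure the approximation lies in the effective domain of $I_m$.
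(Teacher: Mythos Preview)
Your proof is correct and follows essentially the same route as the paper: the upper bound via Jensen's inequality applied to the piecewise-constant approximation $\langle\mu\rangle_m$, the lower bound via the identity $I_m = H(\cdot\,|\,\lambda\,dt)$ on the effective domain of $I_m$ combined with lower semi-continuity of relative entropy, and goodness from compactness of $\mathcal{M}_1(S^1\times[0,1])$. You are in fact slightly more careful than the paper in flagging that the upper-bound argument uses $\mu \in \mathcal{N}_1$ to ensure that each time-slice of $\langle\mu\rangle_m$ is a probability measure (so that $\langle\mu\rangle_m$ lies in the effective domain of $I_m$).
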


\begin{proof}
Recall the rate function of $\mu_{n,m}$:
\begin{align*}
  &I_m(\mu) = 
    \begin{cases}
        \frac{1}{m}\sum\limits_{i=1}^{m}H(\nu_i|\lambda), &\text{ if }\mu(d\theta,dt) = \sum\limits_{i=1}^{m}\nu_i(d\theta)\mathds{1}_{[\frac{i-1}{m},\frac{i}{m})}(t)dt \text{ with } \nu_i\in\mathcal{M}_{1}(S^{1}),\\
        \infty, &\text{ otherwise}.
    \end{cases}
\end{align*}
From the approximate contraction principle the rate function of $\mu_{n}$ is given by
\begin{align*}
    I(\mu)=\sup\limits_{\delta>0}\lowlim\limits_{m\to\infty}I_m\left(B(\mu,\delta)\right).
\end{align*}

First, we prove that $I(\mu)\le H(\mu|\lambda(d\theta) dt)$. Assume $\mu(d\theta, dt) = \rho_{t}(\theta)\lambda(d\theta)dt$ and that $\iint \rho_{t}(\theta)\log\rho_{t}(\theta) \lambda(d\theta)dt<\infty$, for otherwise the inequality is trivially satisfied. By Lemma \ref{lemma:approximation}, the approximation $\langle\mu\rangle_m$ can be made arbitrary close to $\mu$ in the Kantorovich–Rubinshtein distance, i.e., for any $\delta>0$ there exists $m_{\delta}\in\mathbb{N}$ such that for all $m\ge m_{\delta}$ it holds that $d_{KR}(\mu,\langle\mu\rangle_{m})<\delta$.  Thus, the inclusion $\langle\mu\rangle_{m}\in B(\mu,\delta)$ yields $I_{m}(B(\mu,\delta)) \le I_{m}(\langle\mu\rangle_m)$. However, the bound  $I_{m}(\langle\mu\rangle_m)$ allows for the explicit formula
\begin{align*}
    I_{m}(\langle\mu\rangle_m) 
    = \frac{1}{m}\sum\limits_{i=1}^{m}H\left(m\int_{\frac{i-1}{m}}^{\frac{i}{m}}\rho_{s}(\theta)ds \lambda(d\theta)\Bigg| \lambda(d\theta)\right).
\end{align*}
The summands can be bounded with the help of Jensen's inequality as the function $f(x)=x\log x$ is convex:
\begin{align*}
    f\left(m\int_{\frac{i-1}{m}}^{\frac{i}{m}}\rho_{s}(\theta)ds\right) 
    \le m\int_{\frac{i-1}{m}}^{\frac{i}{m}}f\left(\rho_{s}(\theta)\right)ds.
\end{align*}
Integrating with respect to $\lambda(d\theta)$ gives
\begin{align*}
    H\left(m\int_{\frac{i-1}{m}}^{\frac{i}{m}}\rho_{s}(\theta)ds \lambda(d\theta)\Bigg| \lambda(d\theta)\right)
    \le \int_{0}^{2\pi} m\int_{\frac{i-1}{m}}^{\frac{i}{m}}f\left(\rho_{s}(\theta)\right)ds \lambda(d\theta),
\end{align*}
which translates to $I_{m}(\langle\mu\rangle_{m})\le H(\mu|\lambda(d\theta) dt)$. Thus, the bound 
\begin{align*}
    \lowlim\limits_{m\to\infty}I_{m}(B(\mu,\delta))\le H(\mu|\lambda(d\theta) dt),
\end{align*}
is obtained, and after taking taking the supremum in $\delta>0$,  we arrive at
\begin{align*}
    I(\mu)\le H(\mu|\lambda(d\theta) dt).
\end{align*}

Second,  we prove the reverse inequality $I(\mu)\ge H(\mu|\lambda(d\theta) dt)$. Lower semi-continuity of the relative entropy $H$ can be formulated as
\begin{align*}
    H(\mu|\lambda(d\theta) dt) = \lim\limits_{\delta\downarrow 0}\inf\limits_{\tilde\mu \in B(\mu,\delta)}H(\tilde\mu|\lambda(d\theta) dt).
\end{align*}
Combining it with the identity 
\begin{align*}
    H(\tilde\mu|\lambda(d\theta)dt) = I_{m}(\tilde\mu), \text{ whenever } \tilde\mu(d\theta,dt) = \sum\limits_{i=1}^{m}\nu_{i}(d\theta)\mathds{1}_{[\frac{i-1}{m},\frac{i}{m})}(t)dt,
\end{align*}
we obtain the following chain of inequalities 
\begin{align*}
    &\inf\limits_{\tilde\mu\in B(\mu,\delta)}H(\tilde\mu|\lambda(d\theta) dt)\\
    &\le \inf\limits \left\{H(\tilde\mu|\lambda(d\theta) dt): \tilde\mu \in B(\mu,\delta) \text{ and }\tilde\mu(d\theta,dt) = \sum\limits_{i=1}^{m}\nu_{i}(d\theta)\mathds{1}_{[\frac{i-1}{m},\frac{i}{m})}(t)dt \right\}\\
    &\le\inf\limits_{\tilde\mu\in B(\mu,\delta)} I_{m}(\tilde\mu).
\end{align*}
The last inequality uses the fact that $I_m(\tilde\mu)=\infty$ if $\tilde\mu$ is not of the form \scalebox{0.9}{$\sum\limits_{i=1}^{m}\nu_{i}(d\theta)\mathds{1}_{[\frac{i-1}{m},\frac{i}{m})}(t)dt$}, thus allowing the addition of such measures to set, over which the infinum is taken, without lowering the value of the infinum. After taking the limit in $m$ and then in $\delta$, we arrive at the desired bound
\begin{align*}
    H(\mu|\lambda(d\theta) dt) \le \sup\limits_{\delta>0} \lowlim\limits_{m\to\infty}\inf\limits_{\tilde\mu\in B(\mu,\delta)}I_m(\tilde\mu) = I(\mu). 
\end{align*} 

Therefore, combining the two bounds together,  the equality $I(\mu)=H(\mu|\lambda(d\theta) dt)$ is obtained. Moreover, since the relative entropy is a convex good rate function, so is $I$.\\
\end{proof}
\begin{remark}
    In the proof above we have made use of the inequality $I_{m}(\langle\mu\rangle_{m})\le H(\mu|\lambda(d\theta) dt))$, which is direct consequence of the Jensen inequality. In fact, one can show the convergence $\lim\limits_{m\to\infty}I_{m}(\langle\mu\rangle_{m})= H(\mu|\lambda(d\theta)dt)$, see \cite{kozlov_fine-grained_2007}, where the latter and the former quantities are called coarse-grained and fine-grained entropies correspondingly.
\end{remark}

By \cite[Theorem 8.9.3]{bogachev2007measure}, $\mathcal{M}_{1}(S^1 \times [0,1])$ is a compact space. Thus, one can conclude that the family of probability laws of $\mu_{n}$ is exponentially tight. The weak LDP of $\mu_{n}$ together with exponential tightness imply the full LDP of $\mu_n$ with a good rate function $I=H$, see \cite[Lemma 2.1.5]{deuschel2001large}. This concludes the proof of Theorem \ref{theorem_LDP}. \qed
\subsection{Proof sketch for Proposition \ref{P:LDPcontinuous}}
We will not provide a full proof of Proposition \ref{P:LDPcontinuous}, the large deviations result in continuous time, but we will content ourselves with sketching its proof, as it shares many ideas with the proof of Theorem \ref{theorem_LDP}. 

\begin{proof}[Sketch of proof of Proposition \ref{P:LDPcontinuous}] Since the Poisson process $(N(t), t\ge 0)$ is independent of the discrete time dynamics \eqref{Phi_n_mapping}, and given Theorem \ref{theorem_LDP}, it suffices to consider the large deviations of the total mass, i.e., we wish to obtain the asymptotics as $\delta \to 0$ of 
$$
\log \mathbb{P} ( \lambda^{-1} N (t) \in E)
$$
for any Borel measurable set of functions $\tilde m: [0, T] \to [0, \infty)$ equipped with the topology of uniform convergence, and where we recall that $\lambda = 4/\delta^2$ is the rate of the Poisson process (but is also the rate of the LDP in Theorem \ref{theorem_LDP}). 

Now we recall that if $Y_1, \ldots, Y_n$ are i.i.d. Poisson random variable with rate 1 then $\tfrac1n \sum_{i=1}^n Y_i$ satisfies an LDP on $\mathbb{R}$ with rate $n$ and rate function
$$
I(x) = x \log x - (x-1).
$$
Using the fact that arrivals of particles during $[t , t+ \eps]$ are given by a Poisson number of particles with mean $\lambda \eps$ (or, alternatively, a sum of $n = \lambda \eps$ i.i.d. Poisson random variables of rate 1), we deduce that $\lambda^{-1} N(t) $ satisfies an LDP with rate function 
$$
\int_{0}^T \tilde m_t \log \tilde m_t dt - \int_{0}^T (\tilde m_t -1)dt
$$
and rate $\lambda$. However, on the space $\tilde \cN_T$ we have necessarily $\int_0^\infty \tilde m_t dt  = T$ so that the rate function is just $\int_{0}^T \tilde m_t \log \tilde m_t dt = h_{\mathbb{R}} (\tilde{m})$, as desired.
\end{proof}
%Finite entropy shapes: Proof of Theorem~\ref{prop:shapes}

\section{Geometric properties of finite entropy Loewner chains}
\label{section_finite_entropy}
This section completes the proof of Theorem~\ref{prop:shapes}.

\subsection{Loewner--Kufarev energy and Weil-Petersson quasicircles}
\label{SS:energyinequality}

%Moreover, every Weil-Petersson quasicircle can be realized as an interface in a finite energy Loewner chain. It is not hard to relate the entropy and energy. 
%We expect finite entropy hulls to be less regular and we do not expect reversibility (can we prove this??) 
%\begin{proposition}\label{prop:LK-energy-comparison}
%    Let $\rho \in \mathcal{N}$. Then
%    \begin{align}
%    H(\rho) \le 2 S(\rho).
%\end{align}
%In particular, any Weil-Petersson quasicircle can be generated by a finite entropy Loewner chain.
%\end{proposition}% If we denote by $H(\rho)$ the relative
% entropy of $\rho(\theta)d\theta$ with respect to uniform probability measure on $S^{1}$, then we can formulate the following

We now prove the entropy-energy inequality of Theorem~\ref{T:energy_entropy}.

\begin{proof}[Proof of Theorem~\ref{T:energy_entropy}]
We claim that if $\rho$ is a probability density on $S^{1}$ such that $\sqrt{\rho}$ is absolutely continuous and $\int_{0}^{2\pi}\rho'(\theta)^2/\rho(\theta)d\theta<\infty$ then the following inequality is satisfied:
\begin{align*}
   h_{S^1}(\rho) \le 2 \mathcal{D}(\sqrt{\rho}),
\end{align*}
where $h_{S^1}(\rho)$ is the relative entropy of $\rho d\theta$ with respect to $d\theta/2\pi$. This immediately implies the first claim of Theorem~\ref{T:energy_entropy}.
%{\color{blue} Can the continuity assumption on the density be relaxed?}
To prove the estimate, we use the following logarithmic Sobolev inequality 
\begin{align*}
    \int_{0}^{2\pi}f^2\log f^2\frac{d\theta}{2\pi} -\int_{0}^{2\pi}f^2\frac{d\theta}{2\pi}\log\int_{0}^{2\pi}f^2\frac{d\theta}{2\pi} \le 2 \int_{0}^{2\pi}|f'|^2\frac{d\theta}{2\pi}, 
\end{align*}
which holds for all real $f$ such that $f'\in L^2(S^{1})$, $\int_{S^{1}}f'd\theta = 0$, and $f$ is continuous and is determined by $f'$ up to a constant; see \cite{emery_yukich_1987ASP}. Then, for $f(\theta) = \sqrt{2\pi \rho(\theta)}$ we obtain the desired inequality:
\begin{align*}
   h_{S^1}(\rho)= \int_{0}^{2\pi} \log\left(2\pi \rho(\theta)\right)\rho(\theta)d\theta \le \frac{1}{2} \int_{0}^{2\pi} \frac{\rho'(\theta)^2}{\rho(\theta)}d\theta= 2 \mathcal{D}(\sqrt{\rho}).
\end{align*}

The last statement of Theorem \ref{T:energy_entropy}, i.e., if $K$ is any compact hull bounded by a Weil--Petersson quasicircle, then $K$ can be generated by a measure with finite Loewner--Kufarev entropy, now follows from the inequality and Theorem~1.4 of \cite{viklund_loewnerkufarev_2024}.
\end{proof}
\begin{remark}Another link between the entropy $\mathcal{H}$ and energy $S$ comes from the de Bruijn identity. Let $\rho_0$ be a probability density on $S^{1}$ with $ \rho_0'(0) = \rho_0'(2\pi) = 0$. Let $\rho_t$ solve the heat equation for $t \ge 0$, so that $\dot{\rho_t} = \rho_t''/2$, with Neumann boundary condition  $\rho_t'(0) =  \rho_t'(2\pi) = 0$. Then $\int_0^{2\pi}\rho_t dx/2\pi \equiv 1$ so we obtain a measure $\rho = (\rho_t)_{\rho \ge 0}$ whose marginals interpolate between $\rho_0$ and the stationary solution $\rho_\infty \equiv 2\pi$. Using the heat equation and integration by parts,
\begin{align*}
    \frac{d}{dt}\int_0^{2\pi} \rho_t \log \rho_t d\theta & = \int_0^{2\pi} \dot{\rho_t}(\log \rho_t + 1) d\theta \\
    & = \frac{1}{2} \int_0^{2\pi} \rho''_t \log \rho_t d\theta \\
    & = -\mathcal{D}(\sqrt{\rho_t}) \le 0,
\end{align*}
with strict inequality unless $\rho_0 \equiv 2\pi$. In this setting the Dirichlet integral on the right is sometimes called the Fisher information.
Hence, for $T \ge 0$, for this particular choice of measure,
\[
\int_0^{2\pi} \rho_T \log \rho_T d\theta -  \int_0^{2\pi} \rho_0 \log \rho_0 d\theta = - S_+(\rho\mid_{[0,T]}).
\]
Since $\rho_t$ converges to the stationary solution with vanishing entropy as $t \to \infty$, by dominated convergence we obtain interesting formula
\[
S_+(\rho) = \int_0^{2\pi} \rho_0 \log \rho_0 d\theta.
\]
\end{remark}
\subsection{Becker quasicircles}\label{sect:becker}
Becker \cite{Becker1972} gave a sufficient condition on the driving measure $\rho$ to generate quasicircles via the Loewner equation, see also \cite{GumenyukPrause2018}. Consider the Loewner--Kufarev equation \eqref{eq:LK-eq}. If $w$ is on the unit circle, the Möbius map $z \mapsto \tfrac{z+w}{z-w}$ maps the exterior disc $\Delta = \hat{\mathbb{C}} \smallsetminus \mathbb{D}$ to the half plane of points with positive real parts. Therefore, the function $H_t$ appearing in \eqref{eq:LK-eq} 
is analytic and has positive real part in $\Delta$ (such a function is sometimes called a Herglotz function). One version of Becker's condition can be stated as follows. If there exists $\kappa \in [0,1)$ such that for a.e., $t \in [0,T]$,
\begin{equation}\label{condition:becker}
H_t(\Delta) \subset \left\{w\in \mathbb{C}: \left|\frac{1-w}{1+w} \right| \le \kappa \right\},
\end{equation}
then $f_t(S^1)$ is a quasicircle for every $t \in [0,T]$, where $(f_t)_{t \in [0,T]}$ is the solution to \eqref{eq:LK-eq}. Note that $w \mapsto (1-w)/(1+w)$ is a M\"obius transformation of the right half-plane onto the unit disc with $1$ mapped to $0$. Since $\kappa <1$, \eqref{condition:becker} implies that $H_t(\Delta)$ is contained in a compact subset of the right half-plane containing $1$. We say that $\gamma$ is a (finite time) Becker quasicircle if $\gamma = f_t(S^1)$ for some Loewner chain satisfying \eqref{condition:becker}. Neither the Becker condition nor the $S(\rho) < \infty$ Weil-Petersson condition implies the other. Note that $\kappa = 0$ if and only if $\rho_t = d\theta dt/2\pi$. 
\begin{proposition}\label{prop:becker}
    If $\rho \in \mathcal{N}_T$ satisfies \eqref{condition:becker}, then $\mathcal{H}(\rho) < CT$, where $C < \infty$ depends only on $\kappa$.
\end{proposition}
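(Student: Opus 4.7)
The plan is to extract from the Becker condition \eqref{condition:becker} a pointwise two-sided bound on the density $\bar\rho_t$, and then conclude by a trivial integration. The key observation is that the set $\{w : |(1-w)/(1+w)| \le \kappa\}$ is the image of the closed disc $\overline{D}(0,\kappa)$ under the Möbius inverse of $w\mapsto (1-w)/(1+w)$, which is a Möbius transformation sending $\mathbb{D}$ onto the right half-plane with $0 \mapsto 1$. A short computation (or a direct Möbius calculation) shows that this image is the closed disc of radius $\frac{2\kappa}{1-\kappa^2}$ centered at the real point $\frac{1+\kappa^2}{1-\kappa^2}$. In particular, every $w$ in this disc satisfies
\[
\frac{1-\kappa}{1+\kappa} \le \mathrm{Re}(w) \le \frac{1+\kappa}{1-\kappa}.
\]
Hence the Becker condition gives, uniformly in $t \in [0,T]$ and $z \in \Delta$,
\[
\frac{1-\kappa}{1+\kappa} \le \mathrm{Re}\, p_t(z) \le \frac{1+\kappa}{1-\kappa}.
\]

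Next I would recover the density $\bar\rho_t$ from boundary values of $\mathrm{Re}\, p_t$. Writing $z = re^{i\phi}$ with $r>1$, a direct computation gives
\[
\mathrm{Re}\,\frac{z+e^{i\theta}}{z-e^{i\theta}} = \frac{r^2-1}{r^2 - 2r\cos(\phi-\theta) + 1},
\]
which is (a rescaling of) the Poisson kernel for the exterior disc. Since $\mathrm{Re}\, p_t$ is bounded, the measure $\rho_t$ can have no singular part (a point mass would make the Poisson integral blow up non-tangentially), so $\rho_t = \bar\rho_t(\theta)\,d\theta/(2\pi)$. By Fatou's theorem the non-tangential boundary values of $\mathrm{Re}\, p_t$ exist a.e.\ and coincide with $\bar\rho_t$, so the two-sided bound from the previous paragraph transfers to
\[
\frac{1-\kappa}{1+\kappa} \le \bar\rho_t(\theta) \le \frac{1+\kappa}{1-\kappa} \quad \text{for a.e. } (\theta,t) \in S^1 \times [0,T].
\]

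To conclude, the function $x \mapsto x\log x$ is continuous on the compact interval $[\tfrac{1-\kappa}{1+\kappa}, \tfrac{1+\kappa}{1-\kappa}]$, hence bounded there by some constant $C = C(\kappa)$ (one can take $C = \tfrac{1+\kappa}{1-\kappa}\log\tfrac{1+\kappa}{1-\kappa}$ since the other endpoint gives a non-positive contribution, but the precise value is irrelevant). Therefore
\[
\mathcal{H}(\rho) \;=\; \frac{1}{2\pi}\iint_{S^1 \times [0,T]} \bar\rho_t(\theta)\log \bar\rho_t(\theta)\, d\theta\, dt \;\le\; C\cdot T,
\]
which is the claim.

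I do not anticipate any essential obstacle; the only mildly delicate step is justifying absolute continuity of $\rho_t$ and the passage to boundary values, but this is standard harmonic analysis once one knows $\mathrm{Re}\, p_t$ is uniformly bounded away from $0$ and $\infty$ in $\Delta$.
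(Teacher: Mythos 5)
Your proof is correct and follows essentially the same route as the paper's: deduce from the Becker condition that $\mathrm{Re}\,p_t$ is uniformly bounded in $\Delta$, conclude via the Poisson/Herglotz representation that $\rho_t$ is absolutely continuous with density bounded by a constant depending only on $\kappa$, and then integrate the pointwise bound on $x\log x$ over $S^1\times[0,T]$. The only difference is cosmetic — you compute the explicit disc $\{|(1-w)/(1+w)|\le\kappa\}$ and both endpoints, where the paper simply cites Duren's $H^p$ book for the $L^\infty$ boundary-value statement and uses only the upper bound $M$; the lower bound you derive is harmless but unnecessary, since $\sup_{r\in[0,M]}|r\log r|$ already suffices.
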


\begin{proof}
    Suppose $H_t$ satisfies \eqref{condition:becker}. Then $H_t$ is a bounded analytic function in $\Delta$. It follows (see e.g. Corollary 2 to Theorem~3.1 of \cite{DurenHp}) that $\Re \, H_t$ is the Poisson integral of an $L^\infty(S^1)$ function $\rho_t$, that is, the measure $\rho_t(d\theta)$ is absolutely continuous with $L^\infty$ density $\rho_t$. Moreover, $M:=\|\rho_t\|_\infty$ depends only on $\kappa$. Next we note that there exists $C$ depending only on $\kappa$ such that,
    \[
   \frac{1}{2\pi}\int_{S^1}\rho_t(\theta) \log \rho_t(\theta) d\theta  \le \sup_{r \in [0, M]}\left|r \log r \right| < C.
    \]
  Integrating over $[0,T]$, we see that $\mathcal{H}(\rho)< C T < \infty$, as claimed. (In fact, a similar argument can be used to show that if $\Re H_t (\Delta)$ is bounded, then the same conclusion applies.)  
\end{proof}
% \begin{remark}
% It follows from the proof of Proposition~\ref{prop:becker} that there exists $\rho$ with finite Loewner--Kufarev energy $S(\rho) < \infty$ that does not satisfy Becker's condition. Conversely, there exists $\rho$ with $S(\rho) = \infty$ satisfying \eqref{condition:becker}.  
% \end{remark}
\subsection{Examples of finite entropy shapes}

\label{SS:examples}

We now discuss several examples where the entropy can be computed or estimated. These examples combined with Theorem~\ref{T:energy_entropy} prove Theorem~\ref{prop:shapes}.
\subsubsection*{Inward-pointing corner}
This example is based on \cite{theodosiadis}. 
The conformal map $\varphi_{t}(z)=e^{t}\frac{z-1}{2}+\frac{1}{2}\sqrt{4z+e^{2t}(1-z)^2}
$ takes the unit disk $\mathbb{D}$ to the union $\mathbb{D}\cup B(e^t, \sqrt{e^{2t}-1})$. Rotating the hull and applying the Schwarz reflection we get the map 
$f_{t}(z) = -1/\varphi_{t}(-1/z)$ that takes the exterior disk $\Delta$ to the complement of the compact hull. Figure \ref{fig:orthogonal_disks} illustrates the hull for different values of $t>0$.
\begin{figure}[H]
    \centering
    \includegraphics[width=0.5\linewidth]{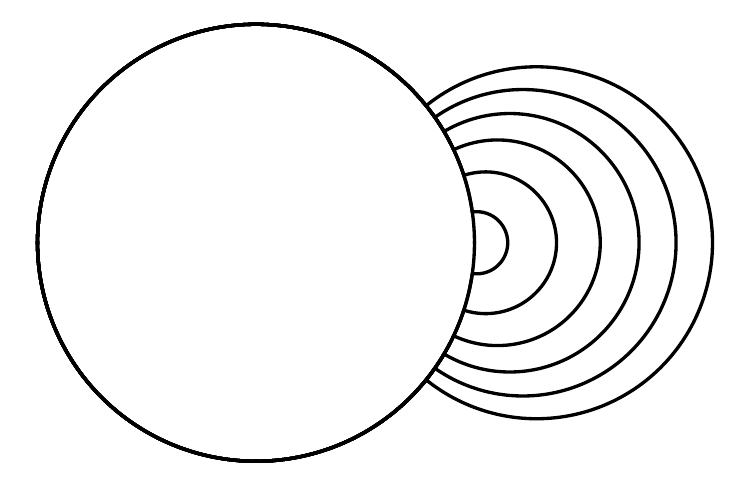}
    \caption{The growing hull given by the map $f_t: \Delta \to \Delta\smallsetminus B(e^t, \sqrt{e^{2t}-1})$ }
    \label{fig:orthogonal_disks}
\end{figure}
\noindent The probability density that generates such hulls is
\[
    \rho_{t}(\theta) 
    = \tilde\rho_{t}(\pi+\theta)\mathds{1}_{[0,\pi)}(\theta) 
    + \tilde\rho(\pi-\theta)\mathds{1}_{[\pi,2\pi]}(\theta),
\]
where
$$\tilde\rho_{t}(\theta) = \left(\frac{e^{-t}}{\pi(1-e^{-2t})}\sin\left(\frac{\theta}{2}\right)\sqrt{e^{2t}\sin^2\left(\frac{\theta}{2}\right)-1}\right)\mathds{1}_{I_{t}}(\theta),$$
and $I_{t} = [2 \arcsin(e^{-t}), 2\pi - 2\arcsin(e^{-t})].$
% \begin{figure}[H]
%     \centering
%     \includegraphics[width=\linewidth]{figures/orthogonal_disks/orthogonal_disks_densities.pdf}
%     \caption{$\theta\to\rho_{t}(\theta)$ for $t=0.05, \ 0.3$ and $1$.}
%     \label{fig:density_orthogonal_disks}
% \end{figure}

For a fixed time $t>0$, the relative entropy of this density with respect to the uniform measure on $S^{1}$ equals
\begin{align*}h_{S^{1}}(\rho_{t}) &=\int_{0}^{2\pi}\tilde\rho_{t}(\theta)\log(2\pi\tilde\rho_{t}(\theta)) d\theta\\
&= 
\log\left(\frac{2}{e^{2t}-1}\right) + \frac{2}{\pi(e^{2t}-1)}\int_{1}^{e^{t}}x\sqrt{x^2-1}\log\left(x\sqrt{x^2-1}\right)\frac{dx}{\sqrt{e^{2t}-x^2}}<+\infty.
\end{align*}
As $t\to0$ the probability density $\rho_{t}$ localizes at one point and the entropy $h_{S^{1}}(\rho_{t})$ diverges. However, as it diverges slower than $1/\sqrt{t}$, the total entropy $\mathcal{H}(\rho) = \int_{0}^{1}h_{S^{1}}(\rho_{t})dt$ is finite.
% \begin{figure}[H]
%     \centering
%     \includegraphics[width=0.5 \linewidth]{figures/orthogonal_disks/orthogonal_disks_entropy_t.pdf}
%     \caption{$t\to H(\rho_t)$}
%     \label{fig:orthogonal_disks_entropy_t}
% \end{figure}
\begin{align*}
    \mathcal{H}(\rho) \le \int_{0}^{1}\underbrace{\log\left(\frac{2}{e^{2t}-1}\right)}_{\sim\log\frac{1}{t}}dt + \frac{2 e\sqrt{e^2-1}\log(e\sqrt{e^2-1})}{\pi}\int_{0}^{1}\underbrace{\frac{\arccos(e^{-t})}{e^{2t}-1}}_{\sim\frac{1}{\sqrt{t}}}dt <+\infty.
\end{align*}
\subsubsection*{Inward-pointing cusp and non-Jordan curve}
This example is also based on \cite{theodosiadis}. 
Consider the conformal maps
\begin{align*}
    \tilde f_{s}(z) = 1+ \frac{2\pi is }{\log\left(\frac{z-e^{-i\pi s}}{z-e^{i\pi s}}\right)}, \quad 0\le s<1,
\end{align*}
that map the exterior disk $\Delta = \{|z|>1\}$ to the complement of hulls depicted in Figure \ref{fig:tangent_disks}.
\begin{figure}[H]
    \centering
    \includegraphics[width=0.4\linewidth]{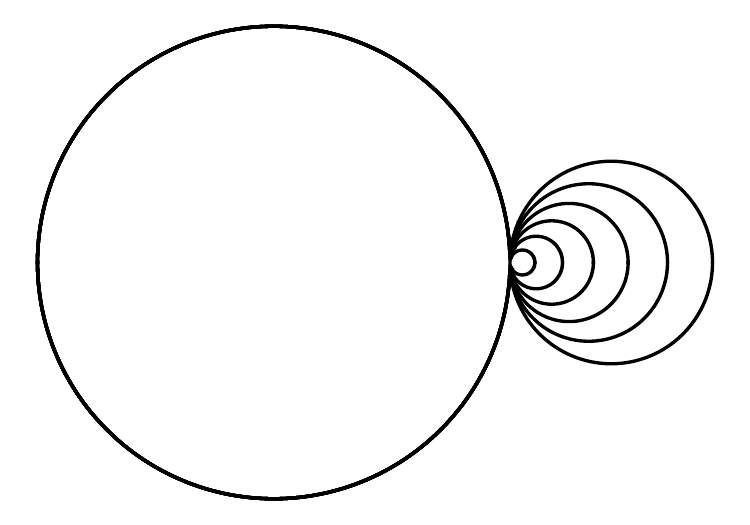}
    \caption{The hulls grown by the conformal map $\tilde f_{s}:\Delta \to \Delta\smallsetminus B(\frac{1}{1-s}, \frac{1+s}{1-s})$. Note that the interfaces are not Jordan curves.}
    \label{fig:tangent_disks}
\end{figure}
%\noindent
%\begin{minipage}{.4\textwidth}
Note that the time-parametrization differs from the standard one. We have $\tilde f_s(z) = \beta(s)z + o(z)$, where 
$\beta(s) = \frac{\pi s}{\sin(\pi s)}$. One can go back to the standard capacity parametrization $f_{t}(z) = \tilde f_{\alpha(t)}(z) = e^{t} z + o(z)$ with the function $\alpha(t) =\beta^{-1}(e^{t})$.
%\end{minipage}
%\begin{minipage}{.5\textwidth}
%     \begin{figure}[H]
%     \centering
%     \includegraphics[width=\linewidth]{figures/tangent_disks/tangent_disks_parametrization.pdf}
%     \caption{Parametrization functions.}
%     \label{fig:alpha}
% \end{figure}
%\end{minipage}\\
The probability density is
\begin{align*}
    \tilde\rho_{s}(\theta) = \frac{\cos(\theta) - \cos(\pi s)}{2s\sin(\pi s)}\mathds{1}_{(-\pi s, \pi s )}(\theta), \quad 0\le s<1, \quad \int_{0}^{2\pi}\tilde\rho_{s}(\theta)d\theta =  \frac{\beta'(s)}{\beta(s)};
\end{align*}
or after time reparametrization one obtains 
\begin{align*}
    \rho_{t}(\theta) =\alpha'(t)\tilde 
    \rho_{\alpha(t)}(\theta) = \alpha'(t)\frac{\cos(\theta) - \cos(\pi \alpha(t))}{2\alpha(t)\sin(\pi \alpha(t))}\mathds{1}_{(-\pi \alpha(t), \pi \alpha(t) )}(\theta), \quad t>0.
\end{align*}

%     \begin{figure}[H]
%     \centering
%     \includegraphics[width=\linewidth]{figures/tangent_disks/tangent_disks_tilda_densities.pdf}
%     \caption{Densities $\tilde\rho_{s}$ for $s=0.05, \ 0.1$ and $0.15$.}
%     \label{fig:tangent_disks_densities}
% \end{figure}
For a fixed time $t>0$, the relative entropy of the density with respect to the uniform measure on $S^{1}$ equals
$$h_{S^{1}}(\rho_t) = \log\left(\frac{\pi \alpha'(t)}{\alpha(t)\sin(\pi\alpha(t))}\right)+ \frac{\alpha'(t)}{\alpha\sin(\pi\alpha(t))}I(\alpha(t))<\infty,$$
where
\begin{align*}I(\alpha) 
&= \int_{0}^{\pi\alpha}(\cos(\theta)-\cos(\pi\alpha))\log(\cos(\theta)-\cos(\pi\alpha))d\theta\\
& = \int_{0}^{1-\cos(\pi \alpha)}\frac{x\log x}{\sqrt{1-(x+\cos(\pi\alpha))^2}}dx.
\end{align*}
The function $I$ can be bounded by 
\begin{align*}
    |I(\alpha)|
    \le \frac{1}{\sqrt{1+\cos(\pi\alpha)}}\left|\int_{0}^{1-\cos(\pi\alpha)}\frac{x\log x}{\sqrt{(1-\cos(\pi\alpha))-x}}dx\right|,
\end{align*}
where the integral is computed explicitly; for $a>0$,
\begin{align*}
    \int_{0}^{a}\frac{x\log x}{\sqrt{a-x}} dx
    = \frac{4}{9} a^{3/2}(3\log a-5+\log 64).
\end{align*}
Thus, as $\alpha\to 0$, $|I(\alpha)|\sim \alpha^{3}\log\frac{1}{\alpha}$.
The derivative of $\alpha$ is given by
$$\alpha'(t) = \frac{e^{t}\sin^2(\pi\alpha)}{\pi(\sin(\pi\alpha) - \pi\alpha\cos(\pi\alpha))},$$
so that
$$\frac{\alpha'}{\alpha\sin(\pi\alpha)} = \frac{e^{t}\frac{\sin(\pi\alpha)}{\pi\alpha}}{\sin(\pi\alpha)-\pi\alpha\cos(\pi\alpha)} =\frac{1}{\sin(\pi\alpha)-\pi\alpha\cos(\pi\alpha)}.$$
As $\alpha\to 0$ 
\begin{align*}
    \sin(\pi\alpha)-\pi\alpha\cos(\pi\alpha)\sim \frac{(\pi\alpha)^3}{3}+o(\alpha^3).
\end{align*}
Hence, it follows that the total entropy $\mathcal{H}(\rho)=\int_{0}^{1} h_{S^{1}}(\rho_{t})dt$ is finite
\begin{align*}
    \mathcal{H}(\rho) =\int_{0}^{1}\underbrace{\log\left(\frac{\pi}{\sin(\pi\alpha(t))-\pi\alpha(t)\cos(\pi\alpha(t))}\right)}_{\sim\log t}dt+\int_{0}^{\alpha(1)}\underbrace{\frac{I(\alpha)}{\alpha\sin(\pi\alpha)}}_{\sim \alpha\log\alpha}d\alpha<+\infty.
\end{align*}

\subsubsection*{Outward-pointing cusps}
Consider the time-independent probability density given by the Poisson kernel:
\begin{align*}
    \rho_{t}(\theta) \equiv  \frac{1}{2\pi}\frac{R^2-1}{1-2R\cos(\theta)+R^2},
\end{align*}
where $R>1$ is a fixed constant.
% \begin{figure}[H]
%     \centering
%     \includegraphics[width=1 \linewidth]{figures/poisson/poisson_density.pdf}
%     \caption{$\theta\to \rho(\theta)$ for $R=1.1,\  1.5 \text{ and }2$.}
%     \label{fig:poisson_density}
% \end{figure}
The conformal map that solves Loewner--Kufarev equation, driven by this density, is given by 
\begin{align*}
    f_t(z) = \frac{1}{R}\left[e^t\frac{Rz+1}{2Rz}\left(Rz+1+\sqrt{R^2z^2+1+2R z(1-2e^{-t})}\right)-1\right], \quad z\in\Delta.
\end{align*}
The derivation of this conformal map can be found, for example, in \cite{sola2013elementary} along with many other examples of Loewner chains driven by densities. Figure~\ref{fig:Poisson_constant_R} illustrates the hulls generated by the Poisson kernel density for different values of the parameter $R>1$.
\begin{figure}[h!]
     \centering
     \begin{subfigure}[b]{0.3\textwidth}
         \centering
         \includegraphics[width=\textwidth]
         {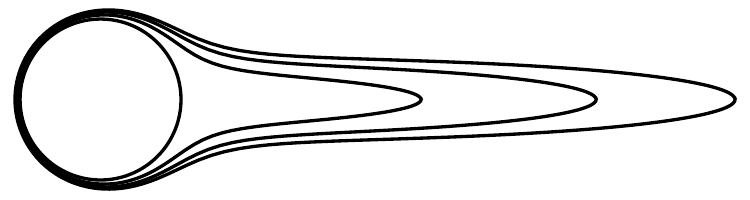}
         \vspace{+10pt}
         \caption{$R=1.1$}
     \end{subfigure}
     \hfill
     \begin{subfigure}[b]{0.3\textwidth}
         \centering
         \includegraphics[width=\textwidth]{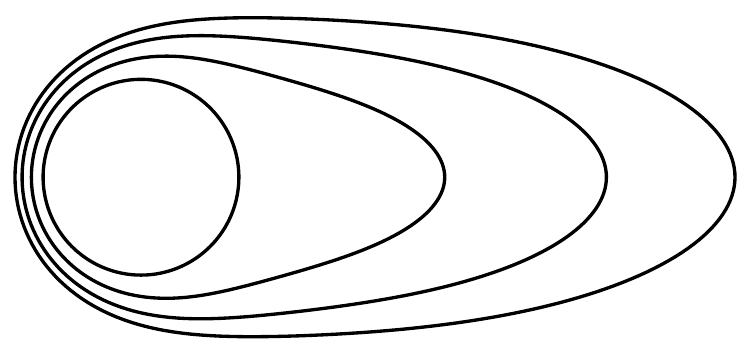}
         \vspace{+1pt}
         \caption{$R=1.5$}
     \end{subfigure}
     \hfill
     \begin{subfigure}[b]{0.3\textwidth}
         \centering
         \includegraphics[width=\textwidth]{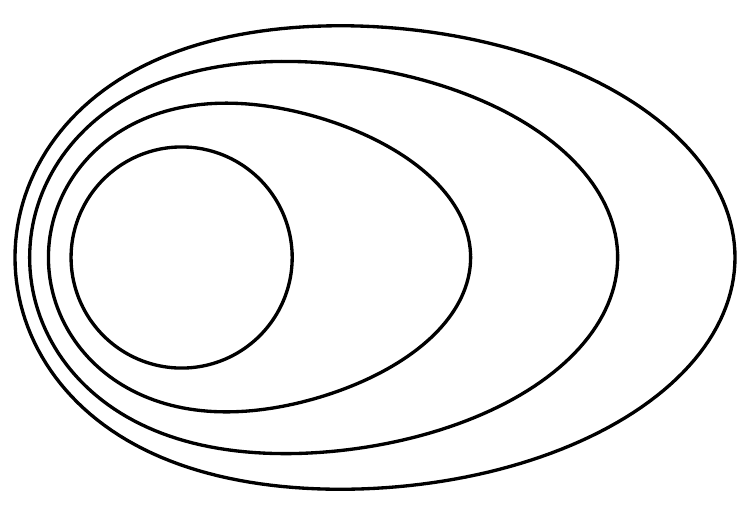}
         \caption{$R=2$}
     \end{subfigure}
        \caption{The hulls generated by the Poisson kernel density with constant parameter $R>1$.}
        \label{fig:Poisson_constant_R}
\end{figure}

For a fixed-time $t>0$, the relative entropy $h_{S^{1}}(\rho_{t})$ equals
\begin{align*}
    h_{S^{1}}(\rho_t) 
    &=\int_{0}^{2\pi}\frac{R^2-1}{1-2R\cos(\theta)+R^2}\log\left(\frac{R^2-1}{1-2R\cos(\theta)+R^2}\right)\frac{d\theta}{2\pi}\\
    &=\frac{1}{\pi} \int_{\frac{R-1}{R+1}}^{\frac{R+1}{R-1}}\frac{\frac{1}{x}\log\frac{1}{x}}{\sqrt{\left(\frac{R+1}{R-1} - x\right)\left(x-\frac{R-1}{R+1}\right)}}dx.
\end{align*}
After the change of variable $y= (x-\frac{R-1} {R+1})\frac{R^2-1}{4R}$ we obtain
\begin{align*}
    h_{S^{1}}(\rho_t) 
    &=\frac{1}{\pi}\int_{0}^{1}\frac{\frac{1}{\frac{R-1}{R+1}+ \frac{4R}{R^2-1}y}\log\frac{1}{\frac{R-1}{R+1}+ \frac{4R}{R^2-1}y}}{\sqrt{(1-y)y}}dy.
\end{align*}
Since $\frac{R-1}{R+1}\le \frac{R-1}{R+1}+ \frac{4R}{R^2-1}y \le \frac{R+1}{R-1}$, the logarithm takes both positive and negative values, and changes the sign at $y = \frac{R-1}{2R}$. We can write down the following estimate
\begin{align*}
h_{S^{1}}(\rho_t) &\le\frac{1}{\pi}\int_{0}^{\frac{R-1}{2R}}\frac{\frac{1}{\frac{R-1}{R+1}+ \frac{4R}{R^2-1}y}\log\frac{1}{\frac{R-1}{R+1}+ \frac{4R}{R^2-1}y}}{\sqrt{(1-y)y}}dy\\
    &\le \frac{1}{\pi}\frac{1}{\frac{R-1}{R+1}}\log\frac{1}{\frac{R-1}{R+1}}\int_{0}^{\frac{R-1}{2R}}\frac{1}{\sqrt{(1-y)y}}dy\\
    &\le \frac{2}{\pi}\arcsin\left(\sqrt{\frac{R-1}{2R}}\right)\frac{R+1}{R-1}\log\frac{R+1}{R-1}.
\end{align*}
    As $R\downarrow 1$, the upper bound essentially behaves like $\frac{1}{\sqrt{R-1}}\log\frac{1}{R-1}$.

Now, let us consider the probability density given by the Poisson kernel where the parameter $R=R(t)$ is time-dependent:
\begin{align*}
    \rho_{t}(\theta) = \frac{1}{2\pi}\frac{R(t)^2-1}{1-2R(t)\cos(\theta)+R(t)^2}.
\end{align*}
Three different choices of $R=R(t)$ are illustrated in Figure \ref{fig:time_dependent_R}.
\begin{figure}[h]
\begin{subfigure}{0.3\textwidth}\centering
\includegraphics[width=0.9\linewidth]{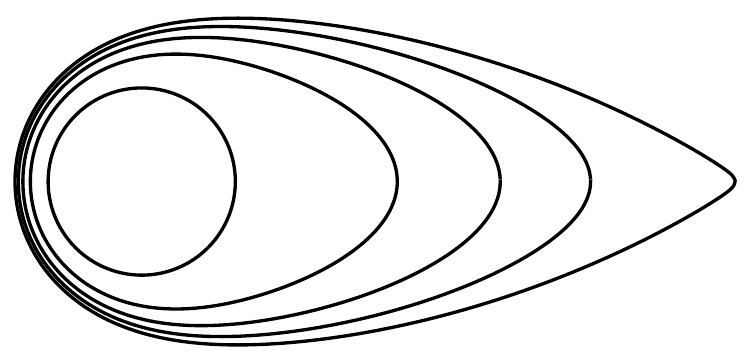} 
\caption{$R(t) = 1+\sqrt{1-t}$}
\label{fig:subim1}
\end{subfigure}
\begin{subfigure}{0.3\textwidth}\centering
\includegraphics[width=0.9\linewidth]{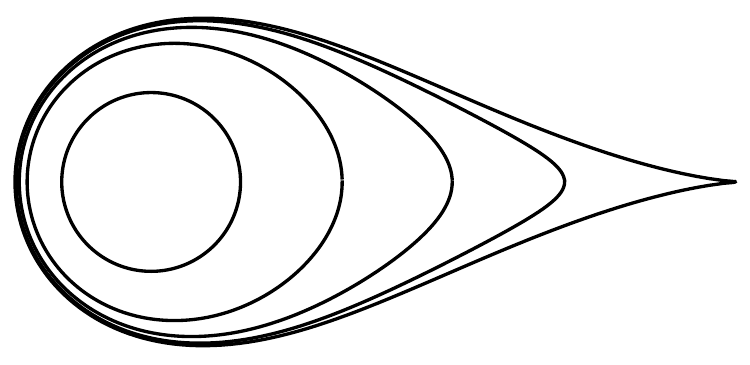}
\caption{$R(t) = \frac{1}{t}$}
\label{fig:subim2}
\end{subfigure}
\begin{subfigure}{0.3\textwidth}\centering
\includegraphics[width=0.9\linewidth]{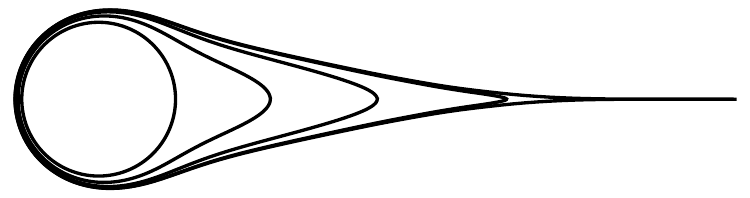}
\vspace{+10pt}
\caption{$R(t) = 1+e^{-\frac{1}{1-t}
}$}
\label{fig:subim3}
\end{subfigure}
\caption{The hulls grown by the Poisson kernel density with different choices of $R=R(t)$.}
\label{fig:time_dependent_R}
\end{figure}
The idea is that if we chose  $R=R(t)$ appropriately, so that $\lim\limits_{t\to 1}R(t)=1$, then at time $t=1$ the hull will form a cusp.
 
For a fixed $t>0$, the relative entropy $h_{S^{1}}(\rho_t)$ equals the same expression that we have computed for a constant parameter $R>1$ above:
\begin{align*}
    h_{S^{1}}(\rho_t) = \int_{0}^{2\pi}\rho_{t}(\theta)\log\rho_{t}(\theta)\frac{d\theta}{2\pi}\le \frac{2}{\pi}\arcsin\left(\sqrt{\frac{R(t)-1}{2R(t)}}\right)\frac{R(t)+1}{R(t)-1}\log\frac{R(t)+1}{R(t)-1}.
\end{align*}
The total entropy $\mathcal{H}(\rho)=\int_{0}^{1}h_{S^{1}}(\rho_{t})dt$ is bounded by
\begin{align*}
    \mathcal{H}(\rho) 
    \le  \int_{0}^{1} \frac{2}{\pi}\arcsin\left(\sqrt{\frac{R(t)-1}{2R(t)}}\right)\frac{R(t)+1}{R(t)-1}\log\frac{R(t)+1}{R(t)-1} dt.
\end{align*}
For both $R(t) = 1+ \sqrt{1-t}$ and $R(t)=1/t$, the upper bound if finite, so they provide examples of finite entropy hulls. 

Next step is to investigate the derivative of the conformal map $f_{t}$ at time $t=1$ around the point $z=1$. The conformal map $g_{t} = f_{t}^{-1}$, from $\Delta_{t} =f_{t}(\Delta)$ to $\Delta$, satisfies the Loewner equation 
\[
    \dot g_{t}(z)= - g_{t}(z)p_{t}(g_{t}(z)), \quad z\in \Delta_{t},
\]
with $g_{0}(z)=z$, and where
\[
    p_{t}(z)  =\int_{0}^{2\pi}\frac{z + e^{i\theta}}{z- e^{i\theta}}\rho_{t}(d\theta)
\]
is an analytic function with positive real part and $p_{t}(\infty)=1$. Differentiating the Loewner equation with respect to $z$ and then integrating with respect to $t$ one gets
\begin{align*}
    g_{t}'(z) = \frac{g_{t}(z)}{z}\exp\left(-\int_{0}^{t}g_{s}p_{s}'(g_{s})ds\right).
\end{align*}
This translates to the derivative of $f_{t}$ via  $f_{t}'(z) = 1/(g_{t}'(f_{t}(z)))$:
\begin{align*}
    f_{t}'(z) = \frac{f_{t}(z)}{z}\exp\left(\int_{0}^{t}g_{s}(f_{t}) p_{s}'(g_{s}(f_{t}))ds\right).
\end{align*}
\begin{comment}
Following \cite{Pommerenke} (p.169) we obtain the estimate
\begin{align*}
    \frac{R_{t}|z|-1}{R_{t}|z|+1}\frac{R_{0}|f_{t}(z)|+1}{R_{0}|f_{t}(z)|-1}\le \left|\frac{zf_{t}'(z)}{f_{t}(z)}\right|\le\frac{R_{t}|z|+1}{R_{t}|z|-1} \frac{R_{0}|f_{t}(z)|-1}{R_{0}|f_{t}(z)|+1}.
\end{align*} 
\end{comment}
With the help of the identity
\[
    \log\frac{p_{t}(g_{t}(z))}{p_{0}(z)} = \int_{0}^{t}\frac{\dot p_{s}(g_{s})+p_{s}'(s_{s})\dot g_{s}}{p_{s}(g_{s})}ds,
\]
which combined with the Loewner equation becomes
\begin{align*}
    \int_{0}^{t}g_{s}p_{s}'(g_{s})ds = \log\frac{p_{0}(z)}{p_{t}(g_{t})} + \int_{0}^{t}\frac{\dot p_{s}(g_{s})}{p_{s}(g_{s})}ds,
\end{align*}
the expression $f_{t}'(z)$ transforms into 
%\begin{align*}
%    g_{t}'(z) = \frac{g_{t}(z)p_{t}(g_{t}(z))}{zp_{0}(z)}\exp\left(-\int_{0}^{t}\frac{\dot p_{s}(g_{s})}{p_{s}(g_{s})}ds\right),
%\end{align*}
%and
\begin{align*}
    f_{t}'(z) = \frac{f_{t}(z)p_{0}(f_{t}(z))}{zp_{t}(z)}\exp\left(\int_{0}^{t}\frac{\dot p_{s}(g_{s}\circ f_{t})}{p_{s}(g_{s}\circ f_{t})}ds\right).
\end{align*}

Specifying $\rho_{t}$ to be the Poisson kernel density one can compute $p_{t}(z)$ explicitly, see \cite{sola2013elementary} for details:  
\[
    p_{t}(z) = \frac{R_{t}z+1}{R_{t}z-1}.    
\]
Then, $f_t'$ becomes
%\begin{align*}
%    \frac{\dot p_{s}(z)}{p_{s}(z)} =  \frac{-2\dot R_{s}z}{R_{s}^2z^2-1},
%\end{align*}
\begin{align*}
    f_{t}'(z) = (R_{t}z-1)\frac{f_{t}(z)(R_{0}f_{t}+1)}{z(R_{0}f_{t}-1)(R_{t}z+1)}\exp\left(\int_{0}^{t}\frac{-2\dot R_{s}(g_{s}\circ f_t)(z)}{R^2_{s}(g_{s}\circ f_t)^2(z)-1}ds\right),
\end{align*}
and at $t=1$, that is, when $R_1=1$:
\begin{align*}
    f_{1}'(z) = (z-1)\frac{f_{1}(z)(R_{0}f_{1}(z)+1)}{z(R_{0}f_{1}(z)-1)(z+1)}\exp\left(\int_{0}^{1}\frac{-2(g_{s}\circ f_1)(z)}{R^2_{s}(g_{s}\circ f_1)^2(z)-1}\dot R_{s}ds\right).
\end{align*}
To control the function inside the exponent denote $\phi_{s} = g_{s}\circ f_{1}$ and write
\begin{align*}
    \int_{0}^{1}\frac{-2\phi_{s}(z)}{R^2_{s}\phi_{s}(z)^2-1}\dot R_{s}ds 
    = \int_{0}^{1}\frac{2\frac{\dot R_{s}}{R_{s}^2}}{(\phi_{s}(z)+\frac{1}{R_{s}})^2}\dot\phi_{s}(z)ds,
\end{align*}
where we used the identity 
\[
    \frac{-\phi_{s}(z)}{R_{s}\phi_{s}(z)-1} = \frac{\dot\phi_{s}(z)}{\phi_{s}(z)+1}
\]
which follows from the Loewner equation.

For the case $R_t = 1/t$, the integral inside the exponent in the expression for $f_{1}'$ is well-behaved:
\begin{align*}
    \int_{0}^{1}\frac{-2}{(\phi_{s}(z)+s)^2}\dot\phi_{s}(z)ds = \frac{2}{z+1} - 
    \frac{2}{f_{t}(z)}+ \int_{0}^{1}\frac{2}{(\phi_{s}(z)+s)^2} ds,
\end{align*}
where 
\begin{align*}
    \left|\int_{0}^{1}\frac{2}{(\phi_{s}(z)+s)^2} ds\right| \le \int_{0}^{1}\frac{2}{|z+s|^2} ds = \frac{2}{\Im (z)}\left(\text{arctan}\left(\frac{\Re(z)+1}{\Im(z)}\right)-\text{arctan}\left(\frac{\Re(z)}{\Im(z)}\right)\right).
\end{align*}
Hence, the derivative can be factored as
\begin{align*}
    f_{1}'(z) = (z-1)q(z),
\end{align*}
where
\begin{align*}
q(z) =     \frac{f_{1}(z)(R_{0}f_{1}(z)+1)}{z(R_{0}f_{1}(z)-1)(z+1)} \exp\left(\frac{2}{z+1} - 
    \frac{2}{f_{t}(z)}+ \int_{0}^{1}\frac{2}{(\phi_{s}(z)+s)^2} ds\right).
\end{align*}
Note that $\lim\limits_{z\to1} q(z)\neq 0$. Therefore, around $z=1$, the derivative behaves like $f_1'(z)\sim z-1$. Moreover, the limit $\lim\limits_{z\to 1}q'(z)$ exists and does not equal $\infty$.

Now we are in a position to show that the choice $R(t)=1/t$ produces a hull with a cusp at time $t=1$. Recall that $f(\partial\Delta)$ is said to have an \textit{inward-pointing cusp} at $f(1)\neq \infty$ if
\begin{align*}
    \arg(f(e^{i\vartheta}) - f(1)) \to\begin{cases}
        \beta \quad &\text{as }\vartheta\to0+,\\
        \beta +2\pi &\text{as }\vartheta \to 0-. 
    \end{cases}
\end{align*}
\begin{remark}
    Note that here \textit{inward-pointing} is understood from the perspective of the conformal map $f:\Delta\to D$, and it would be \textit{outward-pointing} from the perspective of a conformal map from the unit disk $\mathbb{D}$ to the inside of the hull.
\end{remark}
\begin{theorem}[\cite{Pommerenke}, Theorem 3.7]
Let $f:\Delta\to D$ be conformal and $\partial D$ be locally connected. Then    $f(\partial\Delta)$ has a cusp at $f(1)\neq \infty$ if and only if 
    \begin{align*}
        \lim\limits_{z\to 1}\arg\frac{f(z)-f(1)}{(z-1)^{2}}=\beta - \pi.
    \end{align*}
\end{theorem}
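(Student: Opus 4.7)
The plan is to analyse the analytic function $F(z) := (f(z)-f(1))/(z-1)^{2}$, which by univalence of $f$ is non-vanishing on $\overline{\Delta}\smallsetminus\{1\}$, translate the cusp condition into a statement about boundary values of $\arg F$ at $z=1$, and then upgrade the boundary limit to an interior limit using harmonic-function methods.

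First I would compute the boundary behaviour of the denominator. For $z=e^{i\vartheta}$ close to $1$,
\[
(z-1)^{2}=-4\sin^{2}(\vartheta/2)\,e^{i\vartheta},
\]
and following $\arg$ continuously in a one-sided neighbourhood of $1$ in $\Delta$, the squared map wraps once around $0$, so the boundary limits of $\arg((z-1)^{2})$ from the two arcs $\vartheta\to 0\pm$ differ by $2\pi$, with common direction $\pi\pmod{2\pi}$. The cusp condition prescribes a matching $2\pi$ jump in $\arg(f(z)-f(1))$ between the two sides, with common tangential direction $\beta$. Subtracting, $\arg F$ has no jump across $1$ and a common boundary limit $\beta-\pi$ at $1$ from both arcs. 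Running this computation backwards, a boundary limit $\arg F(e^{i\vartheta})\to\beta-\pi$ combined with the continuous-branch values of $\arg((z-1)^2)$ reconstructs the $2\pi$ jump defining the cusp.

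The second step is to pass from this boundary limit to the interior limit. Since $\partial D$ is locally connected, Carath\'eodory's theorem gives that $f$, and hence $F$, extends continuously to $\partial\Delta$ away from $1$. On a small one-sided neighbourhood $U\subset\Delta$ of $1$, fix a branch of $\log F$; then $\arg F=\operatorname{Im}\log F$ is harmonic on $U$. Once boundedness of $\arg F$ on $U$ is established, a Lindel\"of-type theorem for bounded harmonic functions on a Jordan subdomain applies: two continuous boundary limits agreeing at a boundary point force the unrestricted limit $\arg F(z)\to\beta-\pi$ as $z\to 1$ in $\Delta$. The converse direction is obtained by specialising the interior limit to the two boundary arcs and running the first paragraph backwards.

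The main obstacle I anticipate is justifying boundedness of $\arg F$ near $1$ inside $\Delta$, since the theorem is essentially equivalent to ruling out a logarithmic spiral in $F$ at the cusp. The natural approach is to uniformise the cusp by setting $w(z)=\sqrt{f(z)-f(1)}$, which is well-defined on $U$ because $f-f(1)$ is non-vanishing there, and unfolds the cusp at $f(1)$ into a regular boundary point with non-degenerate tangent; this reduces the question to the standard angular-limit results for conformal maps at regular boundary points as developed in Pommerenke's Chapter~3. Alternatively, Koebe distortion estimates yield $|f(z)-f(1)|\asymp|f'(z)||z-1|$ in any Stolz cone at $1$, which combined with pointwise bounds on $|f'|$ (obtained from the cusped geometry of $\partial D$ via the Ahlfors--Beurling or Warschawski type inequalities) give the required quantitative control on $|F|$ and hence on $\arg F$.
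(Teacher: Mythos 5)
The paper does not prove this statement: it is quoted verbatim from Pommerenke's \emph{Boundary Behaviour of Conformal Maps} (where the cusp case is the $\alpha=2$ instance of the general corner theorem), and the paper uses it as a black box. So there is no ``paper's proof'' to match yours against, and I will evaluate your sketch on its own.

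Your high-level structure --- introduce $F=(f-f(1))/(z-1)^2$, compute that the cusp condition makes the boundary limits of $\arg F$ along the two arcs agree (the $2\pi$ jump in $\arg(f-f(1))$ cancelling the $2\pi$ change in $\arg((z-1)^2)$ across $z=1$), then pass from a boundary limit to an unrestricted interior limit by a Lindel\"of-type argument for the bounded harmonic function $\arg F=\operatorname{Im}\log F$ --- is indeed the right skeleton, and is essentially how Pommerenke argues. You also correctly flag that everything hinges on showing $\arg F$ is bounded near $1$ inside $\Delta$, which is the genuinely delicate part.

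The gap is in how you propose to establish that boundedness. Your second alternative --- Koebe distortion plus Warschawski-type bounds on $|f'|$ --- controls the \emph{modulus} $|F|$, and control of $|F|$ gives no control whatsoever on $\arg F$: a non-vanishing analytic function bounded above and below in modulus on a half-disc can still have argument tending to $\pm\infty$ at a boundary point (this is precisely the logarithmic spiral you mention). Since ruling out the spiral is what the whole theorem amounts to, an argument that only constrains $|F|$ cannot close it. Your first alternative, uniformising by $w=\sqrt{f-f(1)}$, does legitimately unfold the $2\pi$-opening into a $\pi$-opening; but it then asks you to prove the $\alpha=1$ case (existence of a Visser--Ostrowski/angular-derivative-type limit for a boundary point with a tangent), which is the same difficulty in different clothing, not a known easier black box. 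In Pommerenke's treatment the boundedness of $\arg F$ is obtained directly from a harmonic-measure estimate adapted to the corner, not from modulus bounds or by first reducing $\alpha$; until you supply an argument of that kind (or cite the corner theorem at $\alpha=1$ explicitly as a prerequisite), the sketch has a real hole exactly at the step you yourself identify as the main obstacle.

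A smaller point: the boundary computation in your first paragraph needs care with branches and orientation. With the convention in the paper, the continuous branch of $\arg((z-1)^2)$ through $\Delta$ jumps by $+2\pi$ from the lower arc to the upper arc, while the cusp condition prescribes a $-2\pi$ jump in $\arg(f-f(1))$ in that same direction, so the cancellation you want only occurs after fixing a consistent orientation; one should verify this explicitly (for instance on the model $f(z)-f(1)=-(z-1)^2$) rather than asserting the jumps ``match'' in the same direction.
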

The Loewner map $f_{t}$ at time $t=1$ satisfies
\begin{align*}
    \lim\limits_{z\to1}\frac{f_{1}(z)-f_{1}(1)}{(z-1)^2} = \lim\limits_{z\to1}\frac{f_{1}'(z)}{2(z-1)} = \lim\limits_{z\to1}\frac{q(z)}{2} = \frac{q(1)}{2}\in\mathbb{R}.
\end{align*}
Thus
\begin{align*}
\lim\limits_{z\to1}\arg\frac{f_{1}(z)-f_{1}(1)}{(z-1)^2} = 0,
\end{align*}
and by continuity
\begin{align*}
\lim\limits_{\vartheta\to 0}\arg\frac{f_{1}(e^{i\vartheta})-f_{1}(1)}{(e^{i\vartheta}-1)^2} = 0.
\end{align*}
This immediately implies 
\begin{align*}
\arg \left(f_{1}(e^{i\vartheta})-f_{1}(1)\right) = \begin{cases}
        \pi &\vartheta\to 0+,\\
        \pi + 2\pi &\vartheta\to 0-.
    \end{cases}
\end{align*}
Therefore, the family $(f_{t}(\partial\mathbb{D}))_{t\in[0,1]}$ forms an inwards-pointing cusp at time $t=1$ at the point $f_{1}(1)$, and confirms the simulation in Figure \ref{fig:poisson_hull}.
 \begin{figure}[H]
    \centering
    \includegraphics[width=0.5\linewidth]{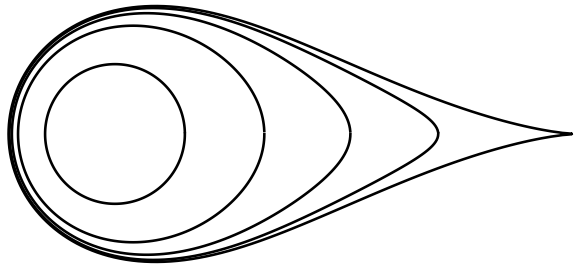}
    \caption{The hulls grown by the Poisson kernel density with $R(t) = 1/t$.}
    \label{fig:poisson_hull}
\end{figure}

\begin{comment}
We summarize the behavior corresponding to different choices of $R=R(t)$.
%\subsubsection*{Comparison of different choices of $R=R(t)$}
\begin{center}
\renewcommand{\arraystretch}{2}
\begin{tabular}{|c|c|c|c|}
\hline
     $R(t) = 1+\sqrt{1-t}$ & $\lim\limits_{t\to1}\dot R(t)= -\infty $  & outward-pointing corner? & $H<\infty$\\
     \hline
      $R(t) =\frac{1}{t}$& $\lim\limits_{t\to}\dot R(t)=-1$& outward-pointing cusp& $H<\infty$\\
      \hline
     $R(t) = 1+ e^{-\frac{1}{1-t}}$&$\lim\limits_{t\to1}\dot R(t) =0$&  outward-pointing cusp& $H=\infty$\\
     \hline
\end{tabular}
 \end{center}
%  \begin{figure}[H]
%     \centering
%     \includegraphics[width=0.5\linewidth]{figures/poisson/Poisson_R_functions.pdf}
%     \caption{The choices of $R=R(t)$ approaching the value $1$ as $t\to1$ at different rates.}
%     \label{fig:poisson_R_functions}
% \end{figure}
\end{comment}

\section{Minimizing the entropy}\label{section:problems}
In this section we discuss several optimization problems motivated by the problem to compute more explicitly the minimizing rate function $\mathcal{H}^*$ in Corollary~\ref{thm:potato}. We will also discuss the example showing that the equipotential measure does not in general give the minimal entropy.

\subsection{Loewner--Kufarev evolution with constraint}
\label{SS:energymin}
While not exactly the setting of Corollary~\ref{thm:potato}, the following formulation is convenient to work with.
\begin{problem}[Whole-plane Loewner--Kufarev entropy with prescribed interface]\label{problem:analytic} 
Let $\gamma$ be a Jordan curve separating $0$ from $\infty$ and assume that the capacity of $\gamma$ equals $e^T$. 
\begin{align}\label{eq:Problem:analytic} 
 \textrm{Minimize} \quad \mathcal{H}(\rho)= \int_{-\infty}^T h_{S^{1}}(\rho_t) dt, 
 \end{align}
 where $\rho \in \mathcal{N}$ is subject to the constraint that $\partial \Delta_T = \gamma$, where $\Delta_t = f_t(\Delta)$ and $(f_t)_{t\in \mathbb{R}}$ is the Loewner chain of $\rho$.
% Find a measure in $\mathcal{N}$ that generates $\gamma$ as a shape and which minimizes the Loewner--Kufarev entropy $H$. 
% \begin{enumerate}
% \item[3.]{}
% \item[4.]{} 
% \end{enumerate}
\end{problem}
% \begin{remark}
%     Let us note, for the record, that a  formulation closer to Corollary~\ref{thm:potato} is the following.
% \begin{problem}[Radial Loewner--Kufarev entropy with prescribed interface]
% \label{problem:analytic2} 
% Let $\gamma$ be a Jordan curve separating $S^1$ from $\infty$ and assume that the capacity of $\gamma$ equals $T > 1$. Find a measure in $\mathcal{N}_T$ that generates $\gamma$ as a shape and which minimizes the Loewner--Kufarev entropy $H$.
% % \begin{enumerate}
% % \item[3.]{}
% % \item[4.]{} 
% % \end{enumerate}
% \end{problem}
% \end{remark}

% \begin{problem} \label{problem:LK} 
% Let $\gamma$ be a smooth Jordan curve separating $0$ from $\infty$ and assume that the capacity of $\gamma$ equals $1$. Find a measure in $\mathcal{N}$ that generates $\gamma$ as a shape and which minimizes the Loewner--Kufarev energy $S$.
% % \begin{enumerate}
% % \item[3.]{}
% % \item[4.]{} 
% % \end{enumerate}
% \end{problem}
The analog of Problem~\ref{problem:analytic} with entropy replaced by energy was solved in \cite{viklund_loewnerkufarev_2024} and a solution exists if and only if $\gamma$ is a Weil-Petersson quasicircle, i.e., has finite Loewner energy. Let $D$ be the bounded component of $\mathbb{C} \smallsetminus \gamma$. Consider the \emph{equipotential foliation} of $D$ defined as follows. Let $f: \mathbb{D} \to D$ be the conformal map such that $f(0)=0, f'(0)>0$. The images under $f$ of concentric circles form a foliation of $D$ which can be described by Loewner evolution. Similarly, we can use the exterior normalized conformal map $g$ to foliate the exterior $\mathbb{C} \smallsetminus D$ by equipotentials. Write $\rho^\gamma_\textrm{eq.}$ for the corresponding driving measure with marginals integrating to $1$. Note that $\rho^\gamma_{\textrm{eq.}} \equiv 1/2\pi d \theta dt$ for $t \ge 0$ (this uses that $\gamma$ has capacity $1$). 
\begin{theorem}[\cite{viklund_loewnerkufarev_2024}, Theorem~1.4]\label{VW-optimizer}
    We have
    \[
    \inf S(\rho) = S(\rho^\gamma_{\textrm{eq.}}) = \frac{1}{16}I^L(\gamma) - \frac{1}{8}\log \frac{|f'(0)|}{|g'(\infty)|},
    \]
    where the infimum is taken over $\rho \in \mathcal{N}$ generating $\gamma$ as a shape and $I^L$ is the Loewner energy of $\gamma$.
\end{theorem}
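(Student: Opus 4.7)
The plan is to prove this in two broad parts: first compute $S(\rho^\gamma_{\textrm{eq.}})$ explicitly and verify it matches the right-hand side, then prove optimality.

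For the explicit computation, I would split the whole-plane Loewner evolution at the capacity time $T = \log|G'(\infty)|$. For $t > T$ the hull $K_t$ contains $\bar{D}$, and the exterior equipotential takes $f_t(w) = G(e^{t-T}w)$. A direct check with the Loewner--Kufarev equation $\dot{f}_t = z f_t' p_t$ shows $p_t \equiv 1$, hence $\rho^{\textrm{eq.}}_t \equiv d\theta/(2\pi)$ on this interval and its contribution to $S$ vanishes. For $t \leq T$ the hull is the interior equipotential $K_t = F(\{|z| \leq r(t)\})$, with $r(t)$ chosen so $\mathrm{cap}(K_t) = e^t$; the exterior uniformizer $f_t : \Delta \to \hat{\mathbb{C}} \setminus K_t$ is built by conformally welding $F$ and $G$ along the intermediate equipotential $F(\{|z| = r(t)\})$. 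The Loewner equation then produces an explicit formula for $\rho^{\textrm{eq.}}_t$ in terms of boundary values of $F$, $G$, and the welding. Substituting this into $\mathcal{D}(\sqrt{\rho^{\textrm{eq.}}_t})$ and integrating, a careful change of variables from the capacity parameter $t$ to the radial parameters on $\mathbb{D}$ and $\Delta$ should convert $\int \mathcal{D}(\sqrt{\rho^{\textrm{eq.}}_t})\,dt$ into the area integrals $\tfrac{1}{16\pi}\int_\mathbb{D} |F''/F'|^2 \, dA$ and $\tfrac{1}{16\pi}\int_\Delta |G''/G'|^2 \, dA$, with the correction $\tfrac{1}{8}\log|F'(0)|/|G'(\infty)|$ emerging from matching these radial parameters with the capacity parameter.

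For optimality, I would use a variational argument. Given any competitor $\rho \in \mathcal{N}$ generating $\gamma$ at time $T$, the goal is $S(\rho) \geq S(\rho^\gamma_{\textrm{eq.}})$. The natural approach is to compare $\rho$ to $\rho^\gamma_{\textrm{eq.}}$ infinitesimally: decompose a variation of the Loewner flow into a component tangent to the manifold of flows generating $\gamma$ at time $T$ and a complementary component. Since $\rho \mapsto \mathcal{D}(\sqrt{\rho})$ is convex, if one can show that $\rho^\gamma_{\textrm{eq.}}$ is a critical point of the constrained problem (by deriving and checking the Euler--Lagrange equation, which should reduce to a statement about harmonicity of $\log|F'|$, $\log|G'|$), then convexity plus uniqueness of the critical point yields the inequality. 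An alternative is a Pythagorean-type identity: reparametrize $\rho_t$ via a time-dependent diffeomorphism of $S^1$ relative to $\rho^\gamma_{\textrm{eq.},t}$ and use the chain rule for Dirichlet energy under diffeomorphism to exhibit a non-negative excess term.

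The main obstacle is the optimality step. The constraint ``$\partial \Delta_T = \gamma$'' is highly nonlinear and global in $\rho$ through the Loewner flow, so standard direct comparison does not apply; one must linearize the Loewner equation and control how perturbations of $\rho$ propagate to perturbations of the boundary curve $\partial \Delta_T$. Making either of the two approaches above rigorous likely requires regularity results for competitors of finite energy (so that the welding and associated flows are smooth enough for the computations) and a careful treatment of the behavior as $t \to -\infty$, where $K_t$ shrinks to a point and the asymptotic expansion must be matched with the conformal radii $|F'(0)|$ and $|G'(\infty)|$.
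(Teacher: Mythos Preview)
This theorem is not proved in the present paper: it is quoted verbatim from \cite{viklund_loewnerkufarev_2024} (Theorem~1.4 there) and simply used as an input to derive the subsequent corollary. There is therefore no proof in the paper to compare your proposal against.

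That said, a few remarks on your sketch relative to what is actually done in \cite{viklund_loewnerkufarev_2024}. Your plan for the explicit computation of $S(\rho^\gamma_{\textrm{eq.}})$ is in the right spirit, and the split into interior and exterior equipotentials with the exterior piece contributing zero is correct. The optimality step, however, is where your outline is weakest, and you correctly identify this. The argument in \cite{viklund_loewnerkufarev_2024} does \emph{not} proceed by a direct variational/Euler--Lagrange computation or a convexity argument on the constraint set. Instead it relies on a duality (``energy duality'') between the Loewner--Kufarev energy $S$ and the Loewner energy $I^L$, together with monotonicity of the Loewner energy along any Loewner chain of finite $S$: one shows that for \emph{any} finite-energy $\rho$ generating a foliation passing through $\gamma$, the total $S(\rho)$ decomposes as a sum over leaves in a way that is bounded below by the contribution of $\gamma$ alone, with equality exactly for the equipotential foliation. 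Your proposed convexity route would require linearizing the highly nonlinear constraint ``$\rho$ generates $\gamma$ at time $T$'', and it is not clear how to carry this through; the approach in \cite{viklund_loewnerkufarev_2024} sidesteps this by working with the foliation geometry rather than variations of $\rho$ directly.
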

% We could use Theorem~\ref{VW-optimizer} as a definition of the Loewner energy of $\gamma$. In a similar spirit, one could define the entropy of $\gamma$ (with the marked points $0, \infty$) to be the minimal Loewner--Kufarev entropy needed
\begin{corollary}
The minimal entropy $\mathcal{H}^*(\gamma)$ needed to generate $\gamma$ satisfies
    \[    
    \mathcal{H}^*(\gamma) \le \frac{1}{8} I^L(\gamma) - \frac{1}{4}\log \frac{|f'(0)|}{|g'(\infty)|}.
    \]
\end{corollary}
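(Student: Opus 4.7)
The proof will be immediate from combining the two key results already stated in the excerpt. The plan is as follows.

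First, I would recall Theorem~\ref{T:energy_entropy}, which gives the pointwise bound $\mathcal{H}(\rho) \le 2 S(\rho)$ for every $\rho \in \mathcal{N}$. Taking the infimum on both sides over all driving measures $\rho \in \mathcal{N}$ that generate $\gamma$ as a shape yields
\[
\mathcal{H}^*(\gamma) \;=\; \inf_{\rho} \mathcal{H}(\rho) \;\le\; 2 \inf_{\rho} S(\rho).
\]
(Alternatively, one need not even pass to an infimum on the right-hand side: it is enough to evaluate $\mathcal{H}$ at a single convenient candidate, e.g.\ the equipotential measure $\rho^\gamma_{\mathrm{eq.}}$, since $\mathcal{H}^*(\gamma) \le \mathcal{H}(\rho^\gamma_{\mathrm{eq.}}) \le 2 S(\rho^\gamma_{\mathrm{eq.}})$.)

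Second, I would invoke Theorem~\ref{VW-optimizer} (Theorem~1.4 of \cite{viklund_loewnerkufarev_2024}), which evaluates the infimum of $S$ over all $\rho$ generating $\gamma$ explicitly as
\[
\inf_{\rho} S(\rho) \;=\; \frac{1}{16} I^L(\gamma) \;-\; \frac{1}{8} \log \frac{|f'(0)|}{|g'(\infty)|}.
\]
Plugging this into the previous inequality and multiplying by $2$ gives precisely the claimed bound
\[
\mathcal{H}^*(\gamma) \;\le\; \frac{1}{8} I^L(\gamma) \;-\; \frac{1}{4} \log \frac{|f'(0)|}{|g'(\infty)|}.
\]

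There is no real obstacle here: the corollary is simply a combination of the log-Sobolev comparison (Theorem~\ref{T:energy_entropy}) and the known minimizer for the Loewner--Kufarev energy problem. The only implicit check is that the finiteness of $I^L(\gamma)$ (Weil--Petersson) legitimately places us in the regime where Theorem~\ref{VW-optimizer} applies and produces a $\rho$ with $S(\rho)<\infty$, so that the entropy bound is non-vacuous; this is exactly the content of the second statement in Theorem~\ref{T:energy_entropy}. Note also that the inequality is generically strict, since the energy-minimizer need not be the entropy-minimizer, foreshadowing the discussion of Problem~\ref{problem:analytic} in the following subsection.
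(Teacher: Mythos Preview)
Your proposal is correct and follows essentially the same approach as the paper: the paper's proof is a one-line application of the inequality $\mathcal{H}(\rho)\le 2S(\rho)$ from Theorem~\ref{T:energy_entropy} combined with the explicit value of $\inf_\rho S(\rho)$ from Theorem~\ref{VW-optimizer}, exactly as you outline.
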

\begin{proof}
    By Theorem~\ref{T:energy_entropy} we know that $\mathcal{H}(\rho) \le 2S(\rho)$, so the estimate follows immediately from this estimate combined with Theorem~\ref{VW-optimizer}.
\end{proof}
 Given the corollary, one may ask whether equipotentials provide the solution to Problem~\ref{problem:analytic} as well. This is however not the case.

% Before giving a counterexample we discuss the following simpler problems which illustrate the idea.
% \begin{problem}\label{problem:easy-entropy}   
% Find the time-homogeneous measure in $\mathcal{N}_1$ with continuous density vanishing at $\theta = 0$ and which minimizes the Loewner--Kufarev {entropy}.
% \end{problem}
% We wish to minimize $\int_0^{2\pi} \rho(\theta) \log \rho(\theta) d\theta$ in $W^{1,2}([0,2\pi])$ under the constraints $\rho(0)=\rho(2\pi) = 0$ and $\int_0^{2\pi}\rho(\theta) d\theta = 1$. It is not hard to check by approximating the function that equals $0$ at $0$ and $1/2\pi$ for $\theta \in (0,2\pi)$ that $\inf_\rho H = 0$ corresponding to the measure $\rho \equiv d\theta dt/2\pi$ a.e. Hence the extremal measure does not have a continuous density, so that this problem has no solution among continuous densities. \qed
% \begin{problem}\label{problem:easy-energy}   
% Find the time-homogeneous measure in $\mathcal{N}_1$ with continuous density vanishing  at $\theta = 0$ vanishes and which minimizes the Loewner--Kufarev {energy}.
% \end{problem}
% Problem~\ref{problem:easy-energy} is equivalent to minimizing the  Dirichlet energy $\int_{0}^{2\pi}|\rho'(\theta)|^2 d\theta$ in $W^{1,2}([0,2\pi])$ under the constraints $\rho(0)=\rho(2\pi) = 0$ and $\int_0^{2\pi}\rho(\theta) d\theta = 1$. 
% It is easy to check (as was done in Section~4.3 of \cite{viklund_loewnerkufarev_2024}) that the solution is given by $\rho(\theta) = \sin^2(\theta/2)/\pi$ and in this case, $\inf_\rho S(\rho)=1/8$. \qed

\begin{proposition}\label{problem:easy-energy}Consider Problem~\ref{problem:analytic}. There exists a Jordan curve $\gamma$ such that
   \[ \mathcal{H}^*(\gamma) < \mathcal{H}(\rho^\gamma_{\textrm{eq.}}),\]
   where $\rho^\gamma_{\textrm{eq.}}$ is the driving measure of the equipotential foliation corresponding to $\gamma$.
\end{proposition}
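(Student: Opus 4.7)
The plan is to exhibit an explicit counterexample. I take $\gamma=\partial B(a,1)$, the boundary of a unit disc (capacity $1$, so $T=0$) centered at a small real $a\neq 0$. The essential simplification is that every whole-plane Loewner chain whose hulls $K_t$ are all Euclidean discs is driven by a measure of the form $\rho_t(\theta)=(1+y(t)\cos\theta)/(2\pi)$ with $y(t)=\dot c(t)e^{-t}$, where $c(t)$ is the moving center of the disc of radius $e^t$; this reduces the problem to scalar calculus of variations over functions $y:(-\infty,0]\to\mathbb{R}$ with the endpoint constraint $\int_{-\infty}^0 y(s)e^s\,ds=c(0)=a$.

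For the equipotential measure, the interior conformal map $f(z)=a+(z-a)/(1-az)$ sends $\partial B(0,r)$ to a circle of center $ar^2(1-a^2)/(1-a^2r^2)$ and radius $r(1-a^2)/(1-a^2r^2)$; equating the radius to $e^t$ and expanding in $a$ yields $y^{\mathrm{eq}}(t)=2ae^t+2a^3(e^t-2e^{3t})+O(a^5)$. For a competitor, I minimize $\int_{-\infty}^0 h(y(t))\,dt$ over $y$ subject to the constraint above, where
\[
h(y)=\int_0^{2\pi}\frac{1+y\cos\theta}{2\pi}\log(1+y\cos\theta)\,d\theta=\frac{y^2}{4}+\frac{y^4}{32}+\frac{y^6}{96}+O(y^8).
\]
The Euler--Lagrange equation $y/2+y^3/8+y^5/16=\lambda e^t$ is solved perturbatively, yielding $y^*(t)=2ae^t+a^3(e^t-2e^{3t})+O(a^5)$, with Lagrange multiplier $\lambda=a+a^3/2+O(a^5)$.

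A direct expansion then gives
\[
\mathcal{H}(\rho^{\mathrm{eq}})=\frac{a^2}{2}+\frac{a^4}{8}+\frac{a^6}{9}+O(a^8),\qquad \mathcal{H}(\rho^*)=\frac{a^2}{2}+\frac{a^4}{8}+\frac{5a^6}{72}+O(a^8),
\]
so that $\mathcal{H}^*(\gamma)\le\mathcal{H}(\rho^*)<\mathcal{H}(\rho^{\mathrm{eq}})$ for $a\neq 0$ sufficiently small, with gap equal to $a^6/24+O(a^8)$. The main subtlety is that $\rho^{\mathrm{eq}}$ and $\rho^*$ coincide at orders $a^2$ and $a^4$: at the linearized level, a Parseval/Cauchy--Schwarz argument forces each Fourier mode of the minimizer to have the exponential time profile $c_{-k}(t)\propto e^{kt}$ which happens to be shared by the equipotential, so the discrepancy is strictly non-linear and requires pushing both expansions to sixth order. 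This reflects the structural fact that the equipotential's optimality for the Loewner--Kufarev energy $S$ rests on the interior conformal structure of the hull (via a cancellation special to the Dirichlet-type integrand), which is not respected in the same way by the nonlinear integrand $x\log x$.
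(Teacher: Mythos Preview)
Your proof is correct and takes a genuinely different route from the paper's. The paper exhibits a family of ``Pac-Man'' shaped hulls: for small $\epsilon>0$, the measure $\rho^\epsilon_t \equiv \frac{1}{2\pi(1-2\epsilon)}\mathds{1}_{(\epsilon,2\pi-\epsilon)}$ on $[0,1]$ (and uniform for $t\le 0$) has entropy $O(\epsilon)$, while as $\epsilon\to 0$ the hulls converge in the Carath\'eodory sense to a slit disc; lower semi-continuity of $\mathcal{H}$ (Proposition~\ref{lem:entropy-facts}) then forces the equipotential entropies to stay bounded below by a positive constant. This is soft and requires essentially no computation beyond the already established lower semi-continuity.

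Your approach instead produces a strikingly simple counterexample---an off-centre circle---and, via the reduction to disc-foliations, converts the question into a one-dimensional strictly convex optimization with a linear constraint. One remark: you do not actually need the $a^6$ expansion for the qualitative conclusion. Since $h''(y)=\int_{S^1}\frac{\cos^2\theta}{1+y\cos\theta}\,\frac{d\theta}{2\pi}>0$ on $(-1,1)$, the restricted functional $y\mapsto\int_{-\infty}^0 h(y(t))\,dt$ is strictly convex, so its minimizer $y^*$ under the linear constraint is unique; your expansions already give $y^{\mathrm{eq}}-y^*=a^3(e^t-2e^{3t})+O(a^5)\not\equiv 0$, whence strict inequality. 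The second-variation estimate
\[
\mathcal{H}(\rho^{\mathrm{eq}})-\mathcal{H}(\rho^*)\approx \tfrac{1}{2}\int_{-\infty}^0 h''(y^*)\,(y^{\mathrm{eq}}-y^*)^2\,dt
= \tfrac{a^6}{4}\int_{-\infty}^0(e^t-2e^{3t})^2\,dt=\tfrac{a^6}{24}
\]
then recovers your gap and cross-checks the sixth-order coefficients. What your argument buys is an explicit smooth example with a quantified gap and a structural explanation for why the discrepancy only appears at order $a^6$; the paper's argument is shorter and yields examples where the ratio $\mathcal{H}^*(\gamma)/\mathcal{H}(\rho^\gamma_{\mathrm{eq.}})$ can be made arbitrarily small.
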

\begin{proof}[Sketch of proof.]
Let $\epsilon > 0$ be very small and consider the measure $\rho^\epsilon$ defined by $\rho_t^\epsilon \equiv 1/2\pi$ for $t \le 0$ and 
\[
    \rho_t^\epsilon \equiv  \frac{1}{2\pi(1-2\epsilon)}\mathds{1}_{(\epsilon, 2\pi - \epsilon)}(\theta), \qquad 0 \le t \le 1.
\] 
The hull at time at time $1$ is a ``Pac-Man'' shaped compact set $K_\epsilon \supset \mathbb{D}$ whose boundary is a Jordan curve $\gamma_\epsilon$ containing the interval $\{e^{i\theta}: \, \theta \in [-\epsilon, \epsilon]\}$. Write $D_\epsilon$ for the simply connected interior of $K_\epsilon$. Note that $\gamma_\epsilon \cap [1, \infty) = \{1\}$ and, as $\epsilon \to 0$, the interiors $D_\epsilon$ converge in the Carath\'eordory sense towards the slit disc $D_0:= (e \mathbb{D})\smallsetminus [1,e]$. Since the  conformal maps from the unit disc converge uniformly on compacts, the equipotential foliations converge and so do the corresponding driving measures.

By Lemma~\ref{lem:entropy-facts} $\mathcal{H}$ is lower semi-continuous. Therefore, if we let $\rho_{\textrm{eq.}}^{D_\epsilon}$ be the driving measure of the equipotential foliation of $D_\epsilon$ considered up to time $1$,   
\[
\liminf_{\epsilon \to 0} \mathcal{H}(\rho_{\textrm{eq.}}^{D_\epsilon}) \ge  \mathcal{H}(\rho_{\textrm{eq.}}^{D_0}) > 0.
\]
On the other hand, we compute that $\mathcal{H}(\rho^\epsilon) =  O(\epsilon)$. It follows that if $\epsilon>0$ is sufficiently small then
\[
\mathcal{H}(\rho^\epsilon) < \mathcal{H}(\rho_{\textrm{eq.}}^{D_\epsilon}),
\]
as claimed.
\end{proof}

\subsection{Non-analytic Loewner evolution as a transport equation}

\label{SS:non-analytic}
We now consider a simpler optimization problem. It is convenient to work in the half-plane, \emph{chordal}, setting. 
First recall that the chordal Loewner--Kufarev PDE is
\begin{align}\label{eq:chordal-loewner-pde}
 \partial_t f_t(z) = - f'_t(z) \int_{\mathbb{R}} \frac{d\mu_t(x)}{z-x}, \qquad z \in \mathbb{H},
 \end{align}
 where $(\mu_t)_{t \ge 0}$ is usually assumed to be a family of probability measures on $\mathbb{R}$.
 Suppose that $(\rho_t)_{t \ge 0}$ is a measurable family of real densities $d\mu_t = \rho_t(x) dx$, smooth say, such that $\int_{\mathbb{R}} \rho_t(x) dx = 1$.
 %with period $1$, picewise smooth say, and 
 For each $t$, let $P_t$ be the Poisson integral of $\rho_t$, that is, 
\[P_t(x,y) = -\frac{1}{\pi}\Im \int_\mathbb{R} \frac{ \rho_t(\xi) d\xi}{z-\xi}, \qquad z=x+iy. \]
% \[P_t(x,y) = \frac{1}{\pi}\int_\mathbb{R} \frac{y \rho_t(\xi)}{(x-\xi)^2 + y^2} d\xi\]
For $x \in \mathbb{R}$, consider the following real PDE,
\begin{align}\label{eq:transport}
\partial_tu(t,y) = P_t(x,y) \partial_y u(t,y), \qquad u(0,y) = y.
\end{align}
This is a transport equation in non-conservative form, with velocity field $t \mapsto P_t(x,y)$, which depends on $\rho_t$ in a non-local manner. Note that $u$ depends implicitly on $x$.
 A similar equation was considered by Carleson and Makarov who interpreted it as a non-analytic simplification of the Loewner--Kufarev equation:
\eqref{eq:transport} arises up to a factor $\pi$ after taking the imaginary part of \eqref{eq:chordal-loewner-pde} (with $\mu_t$ replaced by $\rho_t$) and ignoring the $x$-dependence, see Section~1.5 of \cite{carleson2001aggregation}. In this context we view \eqref{eq:transport} as describing the evolution of a non-analytic map
\[
U_t: x+iy \mapsto x + i u^x(t,y), \quad U_0(x + iy) = x+iy,
\]
and the image of the unit interval describes an interface or ``shape''.

We are interested in the following optimization problem, where we recall the notation $h_\mathbb{R}(f) = \int_\mathbb{R} f \log f dx$. 
\begin{problem}[Non-analytic Loewner evolution]
    Let  $\gamma:\mathbb{R}\to \mathbb{R}_+$ be a smooth function.
 \begin{align}\label{eq:Problem:minimize} 
 \textrm{Minimize} \quad H(\rho)= \int_0^1 h_\mathbb{R}(\rho_t) dt, 
 \end{align}
 where $\rho$ is subject to the following constraints:
 \begin{itemize}
 \item{For each $t \in [0,1]$, 
  \[\int_\mathbb{R} \rho_t(x) dx =1,\]} 
  \item{$\rho$ generates $\gamma$ in the sense that for each $x \in \mathbb{R}$, $u(t,y) = u^x(t,y)$ satisfies \eqref{eq:transport}  with velocity field $P_t$ and 
  \[
\lim_{y \to 0+} u(1,y) = \gamma(x).
 \]}
 \end{itemize}
% \item{\[\int_0^1 \rho_t(x) dx =1.\]}
     \end{problem}
     In other words, we prescribe the interface $U_1(x)=\gamma(x)$ at time $1$ and ask for a measure with minimal entropy that produces this interface via \eqref{eq:transport}. This is a version of Problem~\ref{problem:analytic}. However, while the dynamics is simpler, solutions do not generate conformal maps which results in loss of a source of integrability. Moreover, even though \eqref{eq:transport} can be solved by characteristics via the ODE
     \[
     \dot{Y} = -P_t(x,Y), \qquad Y(0) = y_0,
     \]
     the non-linear dependence on $\rho$ still results in a complicated constraint. Even determining a useful condition for $\gamma$ to be generated by \eqref{eq:transport} with some $\rho$ appears non-trivial. 
     To make progress, we will further simplify \eqref{eq:transport} by replacing $P_t(x,y)$ by $\rho_t(x) = \lim_{y \to 0+}P_t(x,y)$ and for each $x \in \mathbb{R}$ we obtain the following transport equation,
\begin{align}\label{eq:transport2}
\partial_tv(t,y) = \rho_t(x) \partial_y v(t,y) , \qquad v(0,y) = y.
\end{align}
Since the right-hand side of \eqref{eq:transport2} can be written $\partial_t u = \partial_y(\rho_t v)$ the equation is conservative. (Recall that $x$ is a parameter and not a variable.) Now it is not hard to determine what $\gamma$ can be generated and we formulate the following version of the problem.
\begin{problem}[Conservative non-analytic Loewner evolution]\label{problem:linear}
    Let  $\gamma:\mathbb{R}\to \mathbb{R}_+$ be a smooth function.
 \begin{align}\label{eq:Problem:linear}
 \textrm{Minimize} \quad H(\rho)= \int_0^1 h_\mathbb{R}(\rho_t) dt, 
 \end{align}
 where $\rho$ is subject to the following constraints:
 \begin{itemize}
 \item{For each $t \in [0,1]$, 
  \[\int_{\mathbb{R}}\rho_t(x) dx =1,\]} 
  \item{$\rho$ generates $\gamma$ in the sense that for each $x \in \mathbb{R}$, $v(t,y) = v^x(t,y)$ satisfies \eqref{eq:transport2}  with velocity field $\rho$ and 
  \[
\lim_{y \to 0+} v(1,y) = \gamma(x).
 \]}
 \end{itemize}
% \item{\[\int_0^1 \rho_t(x) dx =1.\]}
     \end{problem}
\begin{proposition}
   A solution to Problem~\ref{problem:linear} exists if and only if $\gamma$ satisfies $\int_\mathbb{R} \gamma(x) dx = 1$ and $h_\mathbb{R}(\gamma) < \infty$, in which case the  solution is constant in $t$,
\[
\rho^{*}_t(x) \equiv \gamma(x).
\]
The minimal entropy equals the entropy of $\gamma$,
\[
H^* = h_\mathbb{R}(\gamma).
\]
\end{proposition}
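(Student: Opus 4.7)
The plan is to observe that equation \eqref{eq:transport2} can be solved explicitly, which reduces the transport constraint to a simple integral identity in $t$, and then apply Jensen's inequality pointwise in $x$.

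First, since $\rho_t(x)$ does not depend on $y$, the ansatz $v(t,y) = y + \phi(t,x)$ with $\phi(0,x)=0$ solves \eqref{eq:transport2} provided $\partial_t \phi(t,x) = \rho_t(x)$. Hence the unique solution is
\[
v^x(t,y) = y + \int_0^t \rho_s(x) ds.
\]
Taking $y \to 0+$, the boundary condition $v^x(1, 0+) = \gamma(x)$ becomes the constraint
\begin{equation}\label{eq:constraint-marginal}
\int_0^1 \rho_s(x) ds = \gamma(x), \qquad x \in \mathbb{R}.
\end{equation}

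Next I would check the necessary condition $\int_\mathbb{R} \gamma(x) dx = 1$: integrating \eqref{eq:constraint-marginal} in $x$ and applying Fubini together with the normalization $\int_\mathbb{R} \rho_t(x) dx = 1$ gives $\int_\mathbb{R} \gamma(x) dx = \int_0^1 1 ds = 1$. The second necessary condition, $h_\mathbb{R}(\gamma) < \infty$, follows from the lower bound established below.

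For the lower bound and optimizer, apply Jensen's inequality pointwise in $x$ to the convex function $u \mapsto u\log u$ with respect to Lebesgue measure on $[0,1]$:
\[
\int_0^1 \rho_t(x) \log \rho_t(x) dt \ge \left( \int_0^1 \rho_t(x) dt\right) \log \left( \int_0^1 \rho_t(x) dt\right) = \gamma(x) \log \gamma(x),
\]
where we used \eqref{eq:constraint-marginal} in the final equality. Integrating in $x$ and using Fubini,
\[
H(\rho) = \int_0^1 h_\mathbb{R}(\rho_t) dt \ge \int_\mathbb{R} \gamma(x) \log \gamma(x) dx = h_\mathbb{R}(\gamma).
\]
Conversely, the constant-in-time choice $\rho^*_t(x) \equiv \gamma(x)$ satisfies the normalization $\int_\mathbb{R} \rho^*_t(x) dx = \int_\mathbb{R} \gamma(x) dx = 1$ as well as \eqref{eq:constraint-marginal}, and achieves $H(\rho^*) = h_\mathbb{R}(\gamma)$. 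Equality in Jensen's inequality furthermore forces $\rho_t(x)$ to be constant in $t$ for a.e.\ $x$, so $\rho^*$ is the unique minimizer (up to sets of measure zero). The only delicate point is the use of Fubini, which is justified by splitting $u \log u = (u \log u)_+ - (u \log u)_-$ and using that $h_\mathbb{R}(\gamma) < \infty$ controls the positive part, while the negative part is uniformly bounded on compact sets.
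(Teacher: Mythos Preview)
Your proof is correct and takes a genuinely different route from the paper. Both approaches begin identically: solve \eqref{eq:transport2} explicitly to reduce the transport constraint to $\int_0^1 \rho_t(x)\,dt = \gamma(x)$ and deduce $\int_\mathbb{R}\gamma = 1$. From there the paper proceeds via Lagrange multipliers, writing down the Euler--Lagrange equation $\log\rho + 1 + \lambda(t) + \mu(x) = 0$, which forces $\rho_t(x) = T(t)X(x)$, and then uses the two constraints to identify $T\equiv 1$, $X=\gamma$. You instead apply Jensen's inequality for $u\mapsto u\log u$ with respect to $dt$ on $[0,1]$, obtaining $H(\rho)\ge h_\mathbb{R}(\gamma)$ directly, and then observe that $\rho^*_t\equiv\gamma$ saturates the bound.

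Your argument is more elementary and has the advantage of being a complete proof rather than a critical-point computation: the paper's variational calculation locates a stationary point but does not by itself certify that it is the global minimum (one would appeal to convexity of $H$), whereas your Jensen bound gives the global lower bound in one line. You also obtain uniqueness from the equality case of Jensen, which the paper does not discuss. One small remark on your Fubini justification: the observation that $(u\log u)_-\le e^{-1}$ is pointwise, so ``bounded on compact sets'' is not quite the right phrase; what you need is that $\int_\mathbb{R}(\rho_t\log\rho_t)_-\,dx$ is finite for each $t$, which is not automatic on an unbounded domain. A cleaner route is to note that for any admissible $\rho$ with $H(\rho)<\infty$, Tonelli on the positive part combined with the pointwise Jensen lower bound already yields $H(\rho)\ge h_\mathbb{R}(\gamma)$ without needing full Fubini.
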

\begin{proof}
    The solution to \eqref{eq:transport2} is 
\[
v(t,y) = y + \int_0^t \rho_t(x) dt.
\]
Therefore, the condition that $\rho$ generates $\gamma$ becomes
\[
 v(1,0) = \int_0^1 \rho_t(x) dt= \gamma(x)
\]
and so all constraints are linear. 
Combined with the requirement that $\int \rho_t dx = 1$ this gives the necessary condition $\int \gamma(x) dx = 1$. Consider now the action functional with Lagrange multipliers corresponding to the constraints \[
 L(\rho, \lambda, \mu)=H(\rho) +
	\int_0^1 \lambda(t)\left(\int_{-\infty}^\infty \rho_t(x) dx -1\right)dt
		+ \int_{-\infty}^\infty \mu(x)\left(\int_0^1 \rho_t(x) dt - \gamma(x)\right)dx.
\]
The Euler-Lagrange equation for $\rho$ is
\[
\log \rho + 1 + \lambda + \mu = 0,
\]
so a stationary solution can be written in the form
\[
\rho_t(x) = \exp\{- 1 - \lambda(t) - \mu(x)\} =: T(t)X(x).
\]
On the other hand, the condition $\int_\mathbb{R} \rho_t(x) dx = 1$ implies $T(t) \equiv 1$ and finally the condition   $\int_0^1 \rho_t(x) dt = \gamma(x)$ implies $X(x) = \gamma(x)$.
\end{proof}

 % satisfying \eqref{eq:transport} with
 % \[
 % u_0(x, 0) \equiv 0, \qquad u_1(x,0) = f(x),
 % \]
 % where $f:[0,1] \to \mathbb{R}_+$ is a given smooth function.

 % This problem is analogous to the Benamou-Brenier form for the Monge-Kantorovich mass transfer problem, where \eqref{eq:transport} is replaced by the transport equation with conservation of mass and the relative entropy is replaced by 
% \section{Open problems}

% We may consider the optimization problem with cost $H+S$, in this case do we expect equipotentials?

% How should one define the ``entropy of a Jordan curve''? One option would be the minimal entropy $H^*$ (compare with the Loewner energy which can be obtained in this way).
% Becker condition implies finite entropy? 

% \begin{remark}
%     It is perhaps interesting to consider a ``dual'' model, where we have a driving \emph{function} for the Loewner equation and ask that it has finite entropy -- is this even natural? The intuition here should be that we are perturbing away from a constant driver? 
% \end{remark}

%\begin{remark}
%One may verify that a point-mass has infinite relative entropy. We discuss other examples below. 
%\end{remark}

\bibliographystyle{plain}

\bibliography{references}
\end{document}